\patchcmd{\bibliographystyle}{#1}{apalike}{}{}
\newtheorem{theorem}{Theorem}[section]
\newtheorem{lemma}[theorem]{Lemma}
\newtheorem{definition}[theorem]{Definition}
\newtheorem{assumption}[theorem]{Assumption}
\renewcommand{\phi}{\varphi}
\numberwithin{equation}{section}
\crefname{subsection}{Subsection}{Subsections}
\crefname{figure}{Figure}{Figures}
\crefname{theorem}{Theorem}{Theorems}
\crefname{definition}{Definition}{Definitions}
\crefname{lemma}{Lemma}{Lemmas}
\crefname{assumption}{Assumption}{Assumptions}
\newcommand{\pt}{\partial_t}
\newcommand{\eps}{\varepsilon}
\newcommand{\ds}{\,\textup{d}s}
\newcommand{\dx}{\,\textup{d}x}
\newcommand{\dt}{\,\textup{d}t}
\newcommand{\dW}{\,\textup{d}W}
\newcommand{\bbE}{\mathbb{E}}
\newcommand{\bbN}{\mathbb{N}}
\newcommand{\bbP}{\mathbb{P}}
\newcommand{\bbR}{\mathbb{R}}
\newcommand{\calE}{\mathcal{E}}
\newcommand{\calF}{\mathcal{F}}
\newcommand{\calL}{\mathcal{L}}
\newcommand{\calP}{\mathcal{P}}
\renewcommand{\d}{{\mathrm d}}
\renewcommand{\div}{\operatorname{div}}
\renewcommand{\epsilon}{\varepsilon}
\def\norm #1{\left\|#1\right\|}
\let\originalleft\left
\let\originalright\right
\renewcommand{\left}{\mathopen{}\mathclose\bgroup\originalleft}
\renewcommand{\right}{\aftergroup\egroup\originalright}
\newcommand{\phil}{\phi_\lambda}
\newcommand{\philn}{\phi_{\lambda,n}}
\newcommand{\mul}{\mu_\lambda}
\newcommand{\muln}{\mu_{\lambda,n}}
\newcommand{\sigmal}{\sigma_\lambda}
\newcommand{\sigmaln}{\sigma_{\lambda,n}}
\newcommand{\E}{\bbE}
\renewcommand{\rho}{\varrho}
\renewcommand{\P}{\bbP}
\renewcommand{\R}{\bbR}
\renewcommand{\tilde}{\widetilde}
\renewcommand{\hat}{\widehat}
\newcommand{|}{\vert}
\newcommand{\hphi}{\hat\phi}
\newcommand{\hmu}{\hat\mu}
\newcommand{\hsigma}{\hat\sigma}
\newcommand{\hW}{\hat W}
\newcommand{\Psil}{\Psi_{\!\lambda}}
\newcommand{\tphil}{\tilde\phil}
\newcommand{\tmul}{\tilde\mul}
\newcommand{\tsigmal}{\tilde\sigmal}
\newcommand{\hphil}{\hat\phil}
\newcommand{\hsigmal}{\hat\sigmal}
\newcommand{\tsigmaln}{\tilde\sigmaln}
\newcommand{\tW}{\tilde W}
\newcommand{\sups}{\sup_{\mathclap{s\in[0,t]}}\,}
\newcolumntype{M}[1]{>{\centering\arraybackslash}m{#1}}
\renewcommand\@biblabel[1]{#1.} 
\begin{document}
\title[A stochastic Cahn--Hilliard system modeling tumor growth]{Analysis and computations of a
stochastic Cahn--Hilliard model for tumor growth
with chemotaxis and variable mobility}
    \author{\fnm{Marvin} \sur{Fritz}$^1$}\email{marvin.fritz@ricam.oeaw.ac.at}
    \affil{$^1$\orgdiv{Computational Methods for PDEs}, \orgname{Johann Radon Institute for Computational and Applied Mathematics}, \orgaddress{\city{Linz}, \country{Austria}}}
        \author{\fnm{Luca} \sur{Scarpa}$^2$}\email{luca.scarpa@polimi.it}
    \affil{$^2$\orgdiv{Department of Mathematics}, \orgname{Politecnico di Milano}, \orgaddress{\city{Milano}, \country{Italy}}}
	
	\abstract{
		In this work, we present and analyze a system of PDEs, which models tumor growth by taking into account chemotaxis, active transport, and random effects. The stochasticity of the system is modelled by random initial data and Wiener noises that appear in the tumor and nutrient equations. The volume fraction of the tumor is governed by a stochastic phase-field equation of Cahn--Hilliard type, and the mass density of the nutrients is modelled by a stochastic reaction-diffusion equation. We allow a variable mobility function and non-increasing growth functions, such as logistic and Gompertzian growth. Via approximation and stochastic compactness arguments, we prove the existence of a probabilistic weak solution and, in the case of a constant mobilities, the well-posedness of the model in the strong probabilistic sense. Lastly, we propose a numerical approximation based on the Galerkin finite element method in space and the semi-implicit Euler--Maruyama scheme in time. We illustrate the effects of the stochastic forcings in the tumor growth in several numerical simulations.}

    \keywords{stochastic tumor growth model; stochastic Cahn--Hilliard system; well-posedness; martingale solutions; Yosida approximation; Galerkin approximation; energy estimates; Euler--Maruyama scheme; finite elements.}
    \pacs[MSC Classification]{65M80; 92C17; 92C37; 92C42}

    \maketitle
	\newpage

\section{Introduction} 
\label{Sec:intro}
Cancer is among the main global causes of death.
According to \cite{sung2021global}, there were 19.3 million new cancer diagnoses and 9.96 million cancer-related deaths worldwide.
By 2040, the yearly number of new cancer cases is projected to reach 30.2 million, with 16.3 million fatalities attributable to cancer.
Each tumor is distinct and dependent on a variety of characteristics.
There is no guaranteed procedure for curing cancer, nor is its cause entirely known.
Utilizing mathematical models to precisely depict tumor progression is the primary objective of mathematical oncology. 

The mathematical research on tumor growth dynamics has been extensively developed in the last decades. One of the most frequent approaches to describe tumor dynamics is based 
on the so-called diffuse-interface theory, for which we refer, e.g., to 
\cite{CL10, GLNS, HZO, WLFC08}.

In this work, we consider a diffuse-interface model for tumor growth, which takes into account precise biological effects due to both chemotaxis and also possible random perturbations affecting the evolution. More precisely, 
we consider a colony of tumor cells in an open bounded domain $D \subset \mathbb{R}^3$, e.g., representing an organ, and
we focus on phenomenological characterizations to capture
mesoscale and macroscale events around the tumor's evolution in the time interval $[0,T]$ for some fixed final time $T>0$.
The difference of volume fractions between tumor and healthy cells is described by the field 
$\phi:[0,T]\times D\to[0,1]$,
with $\{\phi=1\}$ representing unmixed tumor tissue, and $\{\phi=0\}$ representing the surrounding healthy tissue. The diffuse-interface approach is based on the constitutive assumption that the 
tumor and healthy phases are separated by a 
narrow diffuse interface 
$\{0<\varphi<1\}$ in which the phase-variable may also assume intermediate values.
The further constitutive assumption is that the proliferation of tumor cells takes place by absorption of some nutrient (e.g.~glucose)
which the cells are provided with through capillarity.
The local nutrient concentration is represented by a field
$\sigma:[0,T]\times D\to[0,1]$. 

\subsection{Deterministic model}

Following \cite{lima2015analysis}, the constituents $\phi$ and $\sigma$ are governed by the extended mass balance laws
$$\begin{aligned}
\pt \phi  &= - \div J_\phi + \Gamma_\phi, \\
\pt \sigma  &= - \div J_\sigma + \Gamma_\sigma,
\end{aligned}$$
where $J_\phi$, $J_\sigma$ are mass fluxes and $\Gamma_\phi$, $\Gamma_\sigma$ are source functions. Typically, sink-and-source-type functions are assumed, i.e., it holds $\Gamma_\sigma=-\Gamma_\phi$ and consequently, the system is referred to as closed or conserved. However, 
in order to consider more general biological effects, 
such as apoptosis (i.e.~the natural cell death of tumor cells),
we proceed as in \cite{garcke2016cahn} and define the 
source functions as: 
\[
\Gamma_\phi(\phi,\sigma) =(\beta\sigma-\alpha)f(\phi), \qquad 
\Gamma_\sigma(\phi,\sigma) =-\delta \sigma f(\phi).
\]
Here, $\alpha$ denotes the apoptosis rate, $\beta$ the tumor proliferation rate, 
$\delta$ the nutrient consumption rate. In the particular case of $\beta=\delta$ and $\alpha=0$, one is in the scenario of sink and source, i.e., the total mass of tumor and nutrients is conserved.
The function $f$ is assumed to be non-negative and bounded on
the relevant domain $[0,1]$, and normalized in the sense $f(0)=f(1)=0$. This corresponds to the absence of nutrient uptake by the tumor mass if there is either no tumor $\{\phi=0\}$ anyway or if the tumor is already saturated in the sense $\{\phi=1\}$. A relevant choice is the logistic growth function $f(\phi)=\phi(1-\phi)$, see \cite{fritz2019local,fritz2019unsteady,fritz2021analysis,fritz2021modeling}, or the Gompertz function $f(\phi)=\phi\log(1/\phi)$ as in \cite{wagner2023phasefield}. 

The mass fluxes are related to the underlying energy of the system, and we choose the following extended Ginzburg--Landau energy:
$$\mathcal{E}(\phi,\sigma)=\int_\Omega \Big[ \frac{\eps^2}{2} |\nabla \phi|^2 + \Psi(\phi) + \frac{1}{2} |\sigma|^2 - \chi \phi \sigma \Big] \dx.$$
The parameters $\eps>0$ and $\chi \geq 0$  denote the interfacial width and the chemotaxis factor. Chemotaxis describes an adhesion force between the tumor cells and the nutrients. Moreover, $\Psi$ describes a double-well potential and often one considers the regular function $\Psi(\phi)=\frac14 \phi^2(1-\phi)^2$ as an approximation to the physically realistic Flory--Huggins potential, see \cite{cherfils2011cahn}.
We follow Gurtin's approach in \cite{gurtin1996generalized} and employ a scaled mass flux for the phase-field equation, i.e., we propose 
\[
J_\phi = -m_1(\phi) \nabla \delta_\phi \mathcal{E}(\phi,\sigma),
\qquad
J_\sigma = -m_2(\sigma)\nabla \delta_\sigma \mathcal{E}(\phi,\sigma),
\]
where $\delta_\phi \mathcal{E}$ denotes the first variation of $\mathcal{E}$ with respect to $\phi$. Here, $m_i$, $i \in \{1,2\}$, denote mobility functions that are assumed to be non-degenerative, such as $m_1(\phi)=\phi^2(1-\phi)^2+M$ for some arbitrarily small constant $M>0$.  Moreover, $m_2$ corresponds to the diffusivity of the nutrients.

By calculating the first variations of the Ginzburg--Landau energy, 
we obtain the system
\begin{equation} \label{Eq:Model:Det}
\begin{aligned}
  \pt \phi  ={}& \div(m_1(\phi)(\nabla \mu-\chi\nabla\sigma)) + \Gamma_\phi(\phi,\sigma),
  \\
  \mu ={}& \Psi'(\phi) -\eps^2 \Delta\phi , \\
  \pt \sigma   ={}&  \div(m_2(\sigma)(\nabla\sigma - \chi \nabla \phi))  + \Gamma_\sigma(\phi,\sigma) .
\end{aligned}
\end{equation}
This is usually complemented with no-flux boundary conditions for $\varphi$, $\mu$, and $\sigma$, as well as suitable initial data $\varphi_0$ and $\sigma_0$. 

The deterministic model was studied  in \cite{garcke2017well,garcke2017analysis} regarding the existence of weak solutions, in \cite{garcke2020long} regarding the long-time behavior, in \cite{kahle2020parameter,garcke2018optimal} regarding an optimal control problem, and in \cite{garcke2022numerical,hawkins2012numerical} regarding numerical approximations. Other biological phenomena were included in the works \cite{fritz2019local,fritz2021subdiffusive,fritz2021analysis,fritz2023tumor} such as ECM degradation, angiogenesis and subdiffusive behavior.

\subsection{Stochastic model}
The class of deterministic models for tumor growth is certainly 
widely employed and provides a more-than-adequate description of several biological mechanisms. However, as intuitive as the tumor-nutrient balance equations may seem, it is well-established that tumor growth may undergo erratic behaviors that one cannot predict by merely using deterministic models. It is the case, for example, of metastases, whose activation seems to depend on random biological signals (see \cite{TanChen98}), and creation of capillary networks and angiogenesis 
(see \cite{NDetAL05}). Moreover, random perturbations are evident in models accounting for therapeutic treatment in terms of therapy uncertainty or parameter identification problems. These considerations 
inevitably call for a switching to classes of models that are capable 
of capturing such randomness in the underlying process:
in the scientific literature, models of tumor growth that account for unpredictability have been considered in \cite{albano2006stochastic,badri2016optimal,speer1984stochastic,ferrante2000parameter,liotta1976stochastic}. 

Generally, stochastic partial differential equations (SPDEs) are the mathematical tools used to model physical systems subjected to the influence of internal, external or environmental noises. As noted in \cite{liu2013local,liu2010spde}, such stochastic systems can also be used to describe models that are too complex to be described deterministically. Therefore, one employs a statistical strategy to manage the complex free energy defining the biological features of the system in the phase-field equation. To accomplish this, a noise is added to the phase-field equation, thus considering any microscopical fluctuations that affect the evolution of the phase parameter. Moreover, the phase-field equation aims at metastable round shapes, but real-world tumor masses are not perfectly round. In a stochastic model, we introduce a type of uncertainty that results in an irregular shape for the tumor mass that is biologically more realistic.
Lastly, as noted in \cite{orrieri2020optimal}, one can mimic the consequences of angiogenesis by incorporating a multiplicative noise into the reaction-diffusion equation. This sort of stochastic forcing is related to the oxygen received by malignant cells; as a result, its contribution to the overall tumor growth process may be increased.

By taking these considerations into account, we further extend the deterministic model \eqref{Eq:Model:Det} by introducing stochastic components to the system. 
We define the independent cylindrical Wiener (Gaussian) processes
$W_1$, $W_2$ on the separable 
Hilbert spaces $U_1$, $U_2$, respectively, and end up with the stochastic system
\begin{equation} \label{Def:System}
\begin{aligned}
  \d \phi  ={}& \div(m_1(\phi) (\nabla \mu- \chi \nabla\sigma))\dt + (\beta \sigma-\alpha) f(\phi) \dt+ G_1(\phi)\dW_1, \\
  \mu ={}& \Psi'(\phi) -\eps^2 \Delta\phi,   \\
  \d\sigma   ={}& \div(m_2(\sigma)(\nabla \sigma - \chi \nabla \phi)) \dt -\delta \sigma f(\phi) \dt + G_2(\sigma)\dW_2, 
\end{aligned}
\end{equation}
where $G_1$ and $G_2$ are operators that will be specified below.
We equip this system with the classical 
Neumann boundary data $\partial_n \phi  = \partial_n\mu  = \partial_n\sigma  = 0$ on $\partial D$ and 
the initial data $\phi(0) = \phi_0, \sigma(0)=\sigma_0$.

The mathematical literature on the stochastic 
Cahn--Hilliard equation is quite developed, e.g., see the articles 
\cite{cui2022wellposedness,debussche2011stochastic,scarpa2018stochastic,scarpa2021degenerate,scarpa2021stochastic}. We also quote the works \cite{feireisl2019stability,feireisl2021diffuse,deugoue2021splitting,medjo2017existence}
for studies on stochastic diffuse interface models that are coupled to the Navier--Stokes equations.
The work \cite{orrieri2020optimal} studied a stochastic Cahn--Hilliard equation with a coupled reaction-diffusion system and besides its well-posedness an optimal control problem was investigated. However, the typical features of a tumor growth model such as chemotaxis, flow, active transport, and logistic growth were not considered.

\subsection{Goals, novelties, and structure of the paper}
The main goal of the present work is twofold: we focus on the 
stochastic system, both in terms of mathematical analysis and numerical simulations.

More precisely, we first show that the stochastic system admits a probabilistic-weak solution in a very general setting, including non-constant mobilities and positive chemotaxis coefficients.
Up to our knowledge, this is the first available result in the literature of stochastic models for tumor growth dealing with 
both random perturbations and non-constant mobilities.
The analysis is carried out in a wide generality, allowing the 
double-well potential to have polynomial growth at infinity, whose
order depends on the presence of chemotaxis and the fact that the mobility $m_1$ is constant or not. Let us also stress that the presence of non-constant mobilities and the coupling between 
the Cahn--Hilliard equation and the reaction-diffusion equation 
prevent from framing the analysis in any well-established variational 
theory based on monotonicity.
This calls for an ad-hoc study of the system, for which we employ 
a two-level approximation scheme: a Galerkin-type approximation on the functional spaces and a Yosida-type approximation on the nonlinearity.

Secondly, we prove that in the more classical case of constant mobilities the stochastic system is well-posed also in the strong probabilistic sense: this is based on a Yamada--Watanabe argument 
and comes down to showing pathwise uniqueness of solutions. Again, the
issue in non-trivial due to the singular (non-Lipschitz)
behavior of the 
double-well potential in the Cahn--Hilliard equation.

Eventually, we propose a numerical scheme to 
approximate solutions of the stochastic system, and we provide 
several numerical simulations. These explicitly show
the effect of noise on the tumor dynamics, according to
a span of noise intensities, and confirm a more accurate description 
of the cell aggregation phenomenon.

The work is structured as follows. In \cref{Sec:Prelim} 
we give some mathematical preliminaries such as required compactness results and important inequalities.  Moreover, we introduce the function spaces that are typically used in the variational analysis of stochastic PDEs. Well-known results in stochastic analysis such as Prokhorov's theorem and the Burkholder--Davis--Gundy inequality are shortly described.  In \cref{Sec:Analysis}, we prove the existence of probabilistic weak solutions to the stochastic system. Moreover, in the case of constant mobilities, we prove existence and pathwise uniqueness of strong solutions to the stochastic system. In \cref{Sec:Numerical},  we propose a fully discrete scheme for approximating the stochastic model.  We show some selected numerical simulations to highlight the influence of the stochastic components.

\section{Mathematical notation and preliminaries}
\label{Sec:Prelim}
In this section we introduce the main mathematical setting of the work, the functinal spaces that will be used, and we recall some useful 
preliminaries on infinite-dimensioanl stochastic analsysis.

In the following, let $D\subset\bbR^d$, $d \in \{2,3\}$, 
be a bounded domain with sufficiently smooth boundary $\partial D$. Furthermore, let $T>0$ be a finite time horizon and the space-time cylinders are denoted by $D_t=(0,t)\times D$, for every $t\in(0,T]$.
We simply write $(\cdot,\cdot)_D$ and 
$(\cdot,\cdot)_{D_t}$ to indicate the inner products in $L^2(D)$
and in $L^2(D_t)$, respectively, for every $t\in(0,T]$.
We also use the classical symbol $(\cdot)_D$ for the space average on $D$.

\subsection{Functional analysis}
Let $X$ be an arbitrary Banach space. We denote its dual by $X'$ and the duality pairing between $X$ and its dual is written as $\langle \cdot,\cdot \rangle_X$.  If $X$ is a Hilbert space, then the scalar product is denoted by $(\cdot,\cdot)_X$. In the case of a further Hilbert space $Y$, we denote the space of Hilbert--Schmidt operators from $X$ to $Y$ by $\calL^2(X,Y)$ consisting of operators $T\in \calL(X,Y)$ that satisfy $\|T\|_{\calL^2(X,Y)}^2=\sum_{k \in \bbN} \|Te_k\|^2_Y<\infty$ for an orthonormal basis $\{e_k\}_k$ of $X$.  
Analogously, we denote by $\mathcal L^1(X,Y)$ the space of trace-class operators from $X$ to $Y$.

We set the Hilbert spaces
\[
  H:=L^2(D), \qquad V:=H^1(D), \qquad
  V_2=\{u\in H^2(D):\partial_{n} u = 0 \text{ a.e.~on } \partial D\},
\]
endowed with their natural norms, and we identify $H$ with its dual 
through the Riesz isomorphism. We recall that this ensures that 
\[
  V_2\subset V \subset H \subset V' \subset V_2',
\]
where all inclusions are dense and compact.

\subsection{Stochastic calculus} \label{Subsec:Stochastic}
Let 
$(\Omega, \calF, (\calF_t)_{t}, \bbP)$ be a filtered probability space. We denote the progressive $\sigma$-algebra on $\Omega\times[0,T]$  by $\calP$, i.e., the collection of subsets of $\Omega\times[0,T]$ with the indicator function being a progressively measurable process, see \cite[p.314]{rogers2000diffusions}.

Let $X$ denote an arbitrary Banach space and $T>0$. The strongly measurable Bochner-integrable functions
	on $\Omega$ and $(0,T)$ of order $p,q\in [1,\infty]$ are denoted by 
	 $L^p(\Omega;X)$ and $L^q(0,T;X)$, respectively, with the usual 
  convention that $p=0$ and $q=0$ denote the space of strongly measurable functions on $\Omega$ and $(0,T)$, respectively.
	In the case of progressive measurability, we use the notation
	$L^p_\calP(\Omega;L^q(0,T;X))$. 
	If $X$ is separable and reflexive, we introduce an analogous notation for weakly-$*$ measurable functions
$$\begin{aligned}
	L^p_w(\Omega; L^\infty(0,T;X^*))&=
	\{ v\!:\!\Omega\!\to\! L^\infty(0,T; X^*) \text{ weakly-$*$ meas.}: 
	\mathbb E \|v\|_{L^\infty(0,T; X^*)}^p\!<\!\infty
	\} \\
 &=(L^{p/(p-1)}(\Omega; L^1(0,T; X)))^*,\end{aligned}$$ 
 where the second equality holds due to \cite[Thm.~8.20.3]{edwards2012functional}.

We introduce the independent cylindrical Wiener processes $W_i=(W_i(t))_{t\geq 0}$, $i \in \{1,2\}$, on the separable 
Hilbert spaces $U_i$. We further equip the spaces with the ONBs $\{u_n^i\}_n$ for $i \in \{1,2\}$ i.e. it holds $W_i(t)=\sum_{k=1}^\infty \beta^i_k(t) u^i_k$, $i \in \{1,2\}$, for $\{\beta^i_k\}_k$ being a family of i.i.d. $\calF_t$-Brownian motions.  
Let us recall that the series are formal, and they converge 
in every Hilbert-Schmidt extension $\tilde U_i$ of $U_i$, $i=1,2$:
in particular, $W_i$ is a well-defined process with continuous trajectories in $\tilde U_i$.
Moreover, if $K$ is a Hilbert space, 
for $i=1,2$ the stochastic integral
$\int_0^\cdot G(s)\dW_i(s)$
is a well-defined continuous $K$-valued process
for every $G\in L^2_\calP(\Omega; L^2(0,T; \mathcal L(U_i,K)))$.
We further recall that the Burkholder--Davis--Gundy inequality, see \cite[Proposition 2.3.8]{breit2018stochastically}, states
\begin{equation} \label{Lem:BurkholderDavisGundy}
\mathbb{E} \sup_{t \in[0,T]} \bigg\| \int_0^t G(s) \dW_i(s) \bigg\|_K^p \leq C_p \mathbb{E} \left( \int_0^T \left\| G(s) \right\|_{L^2(\mathcal{U}, K)}^2  \ds \right)^{p/2}
\end{equation}
for every $G\in L^2_\calP(\Omega; L^2(0,T; \mathcal L(U_i,K)))$.

It is well-known that if $X$ is a Polish space (i.e., a separable completely metrizable topological space), then every probability measure on $X$ is tight. Furthermore, by Prokhorov's theorem, see \cite[Theorem 2.6.1]{breit2018stochastically}, a collection of probability measures on $X$ is tight if and only if it is relatively weakly compact. Together with the Skorokhod theorem, this links the concept of weak convergence
of a probability measure with that of almost sure convergence of random variables.
Indeed, Skorokhod theorem states the existence of $X$-valued random variables $(U_n)_n$ on some probability space such that the law of $(U_n)_n$ is equal to a given weakly converging sequence of probability measures on $X$, and it holds $U_n(\omega) \to U_0(\omega)$ in $X$ almost surely.
This result can be generalized to the class of sub-Polish spaces, which are topological spaces that are not necessarily metrizable but retain several important properties of Polish spaces; then the result is referred to as Jakubowksi--Skorokhod theorem, see \cite[Theorem 2.7.1]{breit2018stochastically}.

\subsection{It\^o's formula}
Let $i\in\{1,2\}$ be fixed, let $K$ be a Hilbert space, let
$G\in L^2_\calP(\Omega; L^2(0,T; \mathcal L(U_i,K)))$, let
$f\in L^2_\calP(\Omega; L^1(0,T; K))$, and let $\phi_0$ be an $\mathcal{F}_0$-measurable $K$-valued random variable. Then the process
\begin{equation} \label{Eq:StochInt}
\phi(t) = \phi_0 + \int_0^t f(s) \ds + \int_0^t G(s) \dW_i(s)
\end{equation}
is a well-defined continuous $K$-valued process.
If the function $F \colon K \to \mathbb{R}$ is twice Fr\'echet-differentiable and its derivatives $DF$, $D^2F$ are uniformly continuous on bounded subsets of $K$, then the It\^o formula, see \cite[Theorem 2.4.1]{breit2018stochastically}, reads 
\begin{equation} \label{Eq:ItoFormula}
\begin{aligned}
F(\phi(t)) &= F(\phi_0) + \int_0^t \langle DF(\phi(s)), f(s) \rangle \ds + \int_0^t \langle DF(\phi(s)), G(s) \dW(s) \rangle \\ &\quad+ 2 \int_0^t \text{Tr}\big(G(s)^*D^2F(\phi(s))G(s)\big) \ds,
\end{aligned}
\end{equation}
for all $t \in [0,T]$, $\bbP$-almost surely, with the trace being defined as $\text{Tr}(A) = \sum_{k=1}^\infty \langle A u_k^i, u_k^i \rangle$ for every $A \in \calL^1(U,U)$.

\section{Mathematical analysis of the stochastic model} \label{Sec:Analysis}
The subsequent assumptions will be in order throughout the paper. 
\begin{assumption}\label{Assumption} Let the following assumptions hold: \vspace{.1cm}
	\begin{enumerate}[label=\textup{(A\arabic*)}, ref=A\arabic*, leftmargin=.9cm] \itemsep.2em
  \item   $\alpha, \beta, \delta, \chi\geq 0$ and $\eps>0$ are fixed;\label{Ass:Con}
  \item $f\in C^{0,1}(\bbR;[0,1])$; \label{Ass:Fun}
  \item \label{Ass:Mob} $m_1,m_2 \in C^0(\bbR;[m_0,m_\infty])$ for $0<m_0\leq m_\infty<\infty$;
  \item \label{Ass:Psi} $\Psi\in C^2(\bbR;\bbR_{\geq 0})$ with $\Psi'(0)=0$ satisfies the growth conditions 
  \[
  |\Psi'(x)|^{q}+|\Psi''(x)|
  \leq C_\Psi(1+|\Psi(x)|)
  \]
  and 
  \[
  \Psi''(x)\geq -C_\Psi
  \]
  for any $x \in \bbR$, for some $C_\Psi>0$,
  where 
  \[\begin{cases}
  q>1 \quad&\text{if either $\chi=0$ or $m_1$ is constant},\\
  q=2 \quad&\text{if both $\chi>0$ and $m_1$ is non-constant};
  \end{cases}
  \]
  \item \label{Ass:Noi} for $i\in\{1,2\}$, 
  $G_i:H \to \calL^2(U_i,H)$ is measurable and there is a sequence $(g_{i,k})_k \subset W^{1,\infty}(\R)$ such that
  $G_i(v)u_k=g_{i,k}(v)$ for any  $v\in H$ and $k \in \bbN$, with $C_{G_i}:=\sum_{k=0}^\infty \|g_{i,k}\|_{W^{1,\infty}(\R)}^2 < \infty$.
     \item \label{Ass:Ini} $\phi_0$, $\sigma_0$ satisfy
     $\phi_0 \in V$ with $\Psi(\phi_0)\in L^1(D)$ and
  $\sigma_0 \in H$.
\end{enumerate}
\end{assumption}

  Let us comment on the assumptions above. Foremost, we note that assumption \eqref{Ass:Fun} allows both the Gompertz and logistic growth functions, as discussed in \cref{Sec:intro}. Moreover,  
  \eqref{Ass:Psi} prescribes different growth conditions on $\Psi$, depending
  on $\chi$ and $m_1$. In  particular, we note that in the case where either 
  no chemotaxis is present or the first mobility is constant, the 
  assumption $q>1$ allows every polynomial double-well potential of any growth.
  In the pathological case with both chemotaxis and a variable mobility in the Cahn--Hilliard equation, the situation is much more delicate and the assumption on the potential is quite strict, prescribing double-well structure with 
  at most quadratic growth  at infinity. This restriction is due
  to the strong coupling in the system: indeed, 
  if $\chi>0$ one cannot rely on a maximum-principle argument for $\sigma$,
  and this results in the presence of a non-bounded proliferation term in the Cahn--Hilliard equation, which cannot be treated unless using an ad-hoc technique
  working in the constant mobility case.
  Assumption \eqref{Ass:Mob} allows both constant mobilities and positive mobilities, such as $m(\phi)=1_{[0,1]} \phi^2(1-\phi)^2+m_0$ for some small $m_0>0$. The assumption \cref{Ass:Noi} on the operators $G_i$ is widely used in literature and ensures the Lipschitz-continuity and the linear boundedness of the operators.
  The assumptions \eqref{Ass:Ini} on the initial data are restricted  to the nonrandom case. However, we note that it can be extended to the random case, as done for example in \cite{scarpa2021stochastic} for the stochastic Cahn--Hilliard equation.

\begin{definition}[Martingale solution]
\label{Def:Weak} We call $(\hat\Omega,\hat\calF,(\hat\calF_t)_t,
\hat\P,\hat W_1, \hat W_2,\hat\phi,\hat\mu,\hat\sigma)$ a martingale solution to \eqref{Def:System} if $(\hat\Omega,\hat\calF,(\hat\calF_t)_t,\hat\P)$ is a filtered probability space satisfying the usual conditions, 
$\hat W_1$ and $\hat W_2$ are cylindrical Wiener processes on $U_1$ and $U_2$, respectively, and
$$\begin{aligned}
    \hphi &\in L^0_{\mathcal P}(\hat\Omega;C^0([0,T];H)\cap L^2(0,T;V_2)), \\
    \hmu &\in L^0_{\mathcal P}(\hat\Omega;L^2(0,T;V)), \\
    \hsigma &\in L^0_{\mathcal P}(\hat\Omega;C^0([0,T];H)
    \cap L^2(0,T;V)),
    \end{aligned}$$
    are such that $\hmu=-\eps^2\Delta\hphi+\Psi'(\hphi)$ and
   \begin{equation} \label{Def:SystemVariational} \begin{aligned}
        (\hphi(t),v)_D+(m_1(\hphi)\nabla(\hmu-\chi\hsigma),\nabla v)_{D_t}&=(\phi_0,v)_D +(\beta\hsigma-\alpha,f(\hphi)v)_{D_t} \\&\quad+(\textstyle\int_0^t G_1(\hphi(s)) \d\hW_1(s),v)_D \\
        (\hsigma(t),v)_D
        +(m_2(\hsigma)\nabla(\hsigma-\chi\hphi),\nabla v)_{D_t}&=(\sigma_0,v)_D 
        - (\delta\hsigma,f(\hphi)v)_{D_t} \\&\quad + (\textstyle\int_0^t G_2(\hsigma(s)) \d\hW_2(s),v)_D
    \end{aligned}\end{equation}
  for every $v\in V$, for every $t\in[0,T]$, $\hat\bbP$-almost surely.
\end{definition}

\begin{definition}[Strong solution]
\label{Def:Strong} We call $(\phi,\mu,\sigma)$ a (probabilistically-) strong solution to \eqref{Def:System} if 
$$\begin{aligned}
    \phi &\in L^0_{\mathcal P}(\Omega;C^0([0,T];H)\cap L^2(0,T;V_2)), \\
    \mu &\in L^0_{\mathcal P}(\Omega;L^2(0,T;V)), \\
    \sigma &\in L^0_{\mathcal P}(\Omega;C^0([0,T];H)
    \cap L^2(0,T;V)),
    \end{aligned}$$
    are such that $\mu=-\eps^2\Delta\phi+\Psi'(\phi)$ and
   \begin{equation} \label{Def:SystemVariational2} \begin{aligned}
        (\phi(t),v)_D+(m_1(\hphi)\nabla(\mu-\chi\sigma),\nabla v)_{D_t}&=(\phi_0,v)_D +(\beta\sigma-\alpha,f(\phi)v)_{D_t} \\&\quad+(\textstyle\int_0^t G_1(\phi(s)) \d W_1(s),v)_D \\
        (\hsigma(t),v)_D
        +(m_2(\sigma)\nabla(\sigma-\chi\phi),\nabla v)_{D_t}&=(\sigma_0,v)_D 
        - (\delta\sigma,f(\phi)v)_{D_t} \\&\quad + (\textstyle\int_0^t G_2(\sigma(s)) \d W_2(s),v)_D
    \end{aligned}\end{equation}
  for every $v\in V$, for every $t\in[0,T]$, $\bbP$-almost surely.
\end{definition}

The existence of a martingale solution to the stochastic tumor growth system is stated in the next theorem. This is the main result of this work.

\begin{theorem}[Existence of martingale solutions]
  \label{Theorem:Main}
  Let \cref{Assumption} hold. Then there exists a martingale solution $(\hat\Omega,\hat\calF,(\hat\calF_t)_t,\hat\P,\hat W_1, \hat W_2,\hat\phi,\hat\mu,\hat\sigma)$
  to the stochastic tumor system in the sense of \cref{Def:Weak} with regularity
  $$\begin{aligned} \hphi &\in L^\ell(\hat\Omega;C^0([0,T];H)) \cap L^\ell_w(\hat\Omega;L^\infty(0,T;V)) \cap L^{\ell/2}(\hat\Omega;L^2(0,T;V_2)), \\
  \hmu &\in L^{\ell/2}(\hat\Omega;L^2(0,T;V)) \cap L^\ell(\hat\Omega;L^2(0,T;H)), \\
  \hsigma &\in L^\ell(\hat\Omega;C^0([0,T];H)) \cap L^{\ell/2}(\hat\Omega;L^2(0,T;V)),
  \end{aligned}$$
  for every $\ell\geq2$, 
  and there exists a constant $c$, depending only on the data
  $D,T,\alpha,\beta,\delta,\chi, \eps, C_\Psi, q,
  m_0, m_\infty, C_{G_1}, C_{G_2}$, such that 
  the following energy inequality holds:
 \begin{equation} \label{Eq:FinalEnergyEstimate}
\begin{aligned}
	&\hat\E\sup_{s\in[0,T]}\norm{\hphi(s)}_V^2
	+\hat\E\sup_{s\in[0,T]}\norm{\Psi(\hphi(s))}_{L^1(D)}
	+\hat\E
	\|\nabla\hmu\|_{L^2(0,T;H)}^2
 +\hat\E\|\hmu\|_{L^2(0,T; H)}
 \\&\quad +\hat\E\sup_{s\in[0,T]}\norm{\hsigma(s)}_H^2 + 
 \hat\E\|\hsigma\|_{L^2(0,T;V)}^2\\
	&\leq c \Big(\norm{\phi_0}_V^2+\norm{\Psi(\phi_0)}_{L^1(D)} +\norm{\sigma_0}_H^2 \Big).
\end{aligned}\end{equation}
\end{theorem}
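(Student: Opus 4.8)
The plan is to construct a solution through a two-level approximation — a spectral Galerkin truncation in space combined with a Yosida-type regularization of $\Psi'$ — derive uniform energy estimates, and then pass to the limit via stochastic compactness (Prokhorov/Jakubowski--Skorokhod). First I would introduce, for $\lambda>0$, the regularized potential $\Psi_\lambda$ whose derivative $\Psi_\lambda'$ is globally Lipschitz (e.g.\ obtained by truncating $\Psi''$ outside a ball and integrating twice, keeping $\Psi_\lambda\geq0$, $\Psi_\lambda'(0)=0$, and the one-sided bound $\Psi_\lambda''\geq -C_\Psi$ uniformly in $\lambda$), so that $\Psi_\lambda\nearrow\Psi$ and $\Psi_\lambda'\to\Psi'$ pointwise. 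Then, on the span $H_n$ of the first $n$ eigenfunctions of the Neumann Laplacian, I would write the finite-dimensional SDE system for $(\phi_{\lambda,n},\mu_{\lambda,n},\sigma_{\lambda,n})$ obtained by projecting \eqref{Def:System}; since all coefficients are now locally Lipschitz with at most linear growth (the mobilities are bounded by \eqref{Ass:Mob}, $f$ is Lipschitz and bounded by \eqref{Ass:Fun}, $\Psi_\lambda'$ is Lipschitz, and $G_i$ are Lipschitz and linearly bounded by \eqref{Ass:Noi}), standard SDE theory gives a unique global strong solution on $H_n$.

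\textbf{Energy estimates.} The core of the argument is a uniform (in $n$ and $\lambda$) a priori estimate. I would apply the finite-dimensional It\^o formula to the approximate energy $\calE_\lambda(\phi_{\lambda,n},\sigma_{\lambda,n})$ with $\Psi$ replaced by $\Psi_\lambda$. The deterministic part produces the dissipation $-\int m_1(\phi)|\nabla(\mu-\chi\sigma)|^2 - \int m_2(\sigma)|\nabla(\sigma-\chi\phi)|^2$ plus the source contributions from $\Gamma_\phi$ and $\Gamma_\sigma$; the chemotaxis cross terms are absorbed using the lower mobility bound $m_0$ and Young's inequality (this is exactly where the growth dichotomy in \eqref{Ass:Psi} enters: when $\chi>0$ and $m_1$ is non-constant one loses the gradient structure needed to control $\int\sigma^2 f(\phi)\,|\Psi''|$ type terms, forcing $q=2$; when $\chi=0$ or $m_1\equiv\text{const}$ one can test the $\phi$-equation separately with $\mu$ and exploit $f$ bounded to allow any $q>1$). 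The It\^o correction terms $\frac12\sum_k\int g_{i,k}^2$ and the second-variation contributions are bounded by $C_{G_i}(1+\|\phi\|_H^2+\|\sigma\|_H^2)$ using \eqref{Ass:Noi}, and the stochastic integrals have zero expectation. After taking expectations, applying the one-sided bound $\Psi_\lambda''\geq-C_\Psi$ to control $\int\Psi_\lambda'(\phi)G_1\,\cdot$ correction terms, and invoking Gr\"onwall, one gets a bound on $\E\sup_{[0,T]}(\|\phi_{\lambda,n}\|_V^2+\|\Psi_\lambda(\phi_{\lambda,n})\|_{L^1}+\|\sigma_{\lambda,n}\|_H^2)+\E\|\nabla\mu_{\lambda,n}\|_{L^2(0,T;H)}^2+\E\|\sigma_{\lambda,n}\|_{L^2(0,T;V)}^2$ by the right-hand side of \eqref{Eq:FinalEnergyEstimate}; the $\sup$ inside the expectation is handled by first estimating without the $\sup$, then using Burkholder--Davis--Gundy \eqref{Lem:BurkholderDavisGundy} on the martingale term. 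Higher moments ($\ell\geq2$) follow by applying It\^o to $\calE_\lambda^{\ell/2}$ or $(1+\calE_\lambda)^{\ell/2}$ and the same Gr\"onwall/BDG scheme. A separate elliptic estimate, testing $\mu=\Psi_\lambda'(\phi)-\eps^2\Delta\phi$ against $-\Delta\phi$ and using $\Psi_\lambda''\geq-C_\Psi$, upgrades the bound on $\nabla\mu$ to $\E\|\phi_{\lambda,n}\|_{L^2(0,T;V_2)}^{\ell/2}$, and the spatial average of $\mu$ is controlled from $\|\Psi_\lambda'(\phi)\|_{L^1}\leq C_\Psi(1+\|\Psi_\lambda(\phi)\|_{L^1})$, giving the $\E\|\mu\|_{L^2(0,T;H)}$ term.

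\textbf{Passage to the limit.} With these bounds in hand, I would first send $n\to\infty$ at fixed $\lambda$: the laws of $(\phi_{\lambda,n},\mu_{\lambda,n},\sigma_{\lambda,n},W_1,W_2)$ are tight on a suitable product path space (using the Aubin--Lions--Simon compactness of $L^2(0,T;V_2)\cap W^{s,p}(0,T;V')$ into $C^0([0,T];H)$, with the fractional-time regularity coming from the equations plus the BDG bound on the stochastic integral), so Jakubowski--Skorokhod furnishes a new probability space with a.s.-convergent copies; the identification of the limit — in particular $\mu_\lambda=\Psi_\lambda'(\phi_\lambda)-\eps^2\Delta\phi_\lambda$ and the weak form \eqref{Def:SystemVariational} with $\Psi_\lambda$ — uses the a.s.\ strong $H$-convergence to pass the nonlinearities $m_i(\cdot)$, $f(\cdot)$, $\Psi_\lambda'(\cdot)$, $G_i(\cdot)$ to the limit, and a standard martingale-representation argument to reconstruct the Wiener processes. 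Then I repeat with $\lambda\to0$: the $\lambda$-uniform estimates give tightness again, and on the new space $\Psi_\lambda'(\phi_\lambda)\to\Psi'(\phi)$ is handled by the growth control $\|\Psi_\lambda'(\phi_\lambda)\|_{L^q}^q\leq C_\Psi(1+\|\Psi_\lambda(\phi_\lambda)\|_{L^1})$ (uniformly bounded), weak convergence in $L^q(D_T)$, strong a.s.\ convergence $\phi_\lambda\to\phi$ in $C^0([0,T];H)$, and a lower-semicontinuity/Fatou argument for $\|\Psi(\phi)\|_{L^1}\leq\liminf\|\Psi_\lambda(\phi_\lambda)\|_{L^1}$ to both identify the limit potential term and preserve \eqref{Eq:FinalEnergyEstimate}. \textbf{The main obstacle} I anticipate is the energy estimate in the genuinely coupled case $\chi>0$ with non-constant $m_1$: without a maximum principle for $\sigma$, the proliferation term $(\beta\sigma-\alpha)f(\phi)$ tested against $\mu=\Psi'(\phi)-\eps^2\Delta\phi$ generates a term like $\int\beta\sigma f(\phi)\Psi'(\phi)$ that, lacking the gradient cancellation available when $m_1$ is constant, can only be absorbed if $|\Psi'|$ grows no faster than $|\Psi|^{1/2}$, i.e.\ $q=2$ — getting this bookkeeping exactly right, uniformly in both approximation parameters, is the delicate point.
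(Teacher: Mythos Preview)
Your approach matches the paper's: Yosida regularization of $\Psi'$, spectral Galerkin, It\^o on the energy plus BDG and Gr\"onwall for moment bounds, then Prokhorov/Skorokhod tightness and strong--weak identification of the nonlinear limits; your diagnosis of the $q=2$ bottleneck when $\chi>0$ and $m_1$ is non-constant is exactly what drives Step~4 of the paper.

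One point you gloss over, though: the paper does \emph{not} obtain estimates uniform in both $n$ and $\lambda$ simultaneously. At the Galerkin level (Step~2) the bounds are $c_\lambda$, because $\|\Psi_\lambda(\phi_0^n)\|_{L^1}$ is controlled only via the quadratic growth of the Yosida approximant ($\leq c_\lambda(1+\|\phi_0\|_H^2)$) and the It\^o correction $\sum_k\int\Psi_\lambda''(\phi)|G_1(\phi)u_k|^2$ via $|\Psi_\lambda''|\leq c_\lambda$. Only after passing $n\to\infty$ (Step~3) does the paper return to the pre-Gr\"onwall inequality (Step~4), replace $\phi_0^n$ by $\phi_0$ through strong $V$-convergence and weak lower semicontinuity, bound the correction instead by the structural assumption $|\Psi''|\leq C_\Psi(1+\Psi)$ transferred to $\Psi_\lambda$, and close a $\lambda$-independent Gr\"onwall. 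The obstruction to your one-shot route is that the spectral projection $\phi_0^n=\Pi_n\phi_0$ need not satisfy $\|\Psi(\phi_0^n)\|_{L^1}\leq C\|\Psi(\phi_0)\|_{L^1}$ when $\Psi$ grows fast, so the initial energy is not uniformly bounded in both parameters. A minor related slip: the It\^o correction term needs an \emph{upper} bound on $\Psi_\lambda''$, not the one-sided bound $\Psi_\lambda''\geq -C_\Psi$; the latter is what you correctly invoke for the elliptic $V_2$-estimate.
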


In the special case where the mobilities are constant and the 
potential is the classical fourth-order polynomial potential, 
we show that pathwise uniqueness holds and that the system is well-posed
in the strong probabilistic sense on the original probability space 
$(\Omega,\mathcal F, \P)$ and with respect to the original Wiener
processes $W_1$ and $W_2$.

\begin{theorem}[Strong well-posedness]
  \label{Theorem:Main2}
  Let \cref{Assumption} hold, assume that the mobilities 
  $m_1, m_2$ are constant and that 
  \[
  |\Psi''(x)|^2\leq C_\Psi(1+|\Psi(x)|) \quad\forall\,x\in\mathbb R.
  \] 
  Then there exists a unique strong solution $(\phi,\mu,\sigma)$
  to the stochastic tumor system in the sense of \cref{Def:Strong}
  with regularity
  $$\begin{aligned} 
  \phi &\in L^\ell(\Omega;C^0([0,T];H)) \cap L^\ell_w(\Omega;L^\infty(0,T;V)) \cap L^{\ell/2}(\Omega;L^2(0,T;V_2)), \\
  \mu &\in L^{\ell/2}(\Omega;L^2(0,T;V)) \cap L^\ell(\Omega;L^2(0,T;H)), \\
  \sigma &\in L^\ell(\Omega;C^0([0,T];H)) \cap L^{\ell/2}(\Omega;L^2(0,T;V)).
  \end{aligned}$$
  for every $\ell\geq2$,
  and there exists a constant $c$, depending only on the data
  $D,T,\alpha,\beta,\delta,\chi, \eps, C_\Psi, q,
  m_0, m_\infty, C_{G_1}, C_{G_2}$, such that 
  the following energy inequality holds:
 \begin{equation} \label{Eq:FinalEnergyEstimate2}
\begin{aligned}
	&\E\sup_{s\in[0,T]}\norm{\phi(s)}_V^2
	+\E\sup_{s\in[0,T]}\norm{\Psi(\phi(s))}_{L^1(D)}
	+\E
	\|\nabla\mu\|_{L^2(0,T;H)}^2
 +\E\|\mu\|_{L^2(0,T; H)}
 \\&\quad +\E\sup_{s\in[0,T]}\norm{\sigma(s)}_H^2 + 
 \E\|\sigma\|_{L^2(0,T;V)}^2\\
	&\leq c \Big(\norm{\phi_0}_V^2+\norm{\Psi(\phi_0)}_{L^1(D)} +\norm{\sigma_0}_H^2 \Big).
\end{aligned}\end{equation}
  Moreover, for every strong solution $(\phi_1, \mu_1, \sigma_1),(\phi_2,\mu_2,\sigma_2)$ associated to some initial data 
  $(\phi_0^1, \sigma_0^1), (\phi_0^2, \sigma_2)$ satisfying \cref{Ass:Ini},
  there exist two sequences of constants $\{c_n\}_n$ and of stopping times
  $\{\tau_n\}_n$, such that $\tau_n\nearrow T$ almost surely and
  the following continuous dependence holds:
   \begin{equation} \label{Eq:FinalContinuous}
\begin{aligned}
	&\E\sup_{s\in[0,\tau_n]}\norm{(\phi_1-\phi_2)(s)}_H^2+
    \E\|\phi_1-\phi_2\|_{L^2(0,\tau_n; V_2)}^2\\
	&\qquad+\E\sup_{s\in[0,\tau_n]}\norm{(\sigma_1-\sigma_2)(s)}_H^2
   +\E\|\sigma_1-\sigma_2\|_{L^2(0,\tau_n;V)}^2\\
	&\leq c_n \Big(\norm{\phi_0^1-\phi_0^2}_H^2
 +\norm{\sigma_0^1-\sigma_0^2}_H^2 \Big) \quad\forall\,n\in\mathbb N.
\end{aligned}\end{equation}
\end{theorem}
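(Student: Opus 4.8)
The strategy is the classical Yamada--Watanabe route: \cref{Theorem:Main} already provides a martingale solution in the present setting --- constant mobilities satisfy \cref{Assumption} with $m_0=m_\infty$, the exponent $q>1$ being admissible because $m_1$ is constant, and the extra bound $|\Psi''(x)|^2\le C_\Psi(1+|\Psi(x)|)$ only strengthens \cref{Assumption} --- so it suffices to prove pathwise uniqueness, after which a Gy\"{o}ngy--Krylov argument (upgrading the Skorokhod convergence behind \cref{Theorem:Main} to convergence in probability on $(\Omega,\calF,\P)$) yields a strong solution in the sense of \cref{Def:Strong}, driven by the original $W_1,W_2$; uniqueness in law then transfers the bound \eqref{Eq:FinalEnergyEstimate} to \eqref{Eq:FinalEnergyEstimate2}. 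In fact pathwise uniqueness is a by-product of the quantitative continuous dependence \eqref{Eq:FinalContinuous}, whose proof is the heart of the matter.

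To prove \eqref{Eq:FinalContinuous}, fix two strong solutions $(\phi_i,\mu_i,\sigma_i)$ with data $(\phi_0^i,\sigma_0^i)$ and set $\phi:=\phi_1-\phi_2$, $\sigma:=\sigma_1-\sigma_2$, $\mu:=\mu_1-\mu_2=\Psi'(\phi_1)-\Psi'(\phi_2)-\eps^2\Delta\phi$. With constant mobilities the differences obey $\d\phi=(m_1\Delta\mu-\chi m_1\Delta\sigma+R_\phi)\dt+(G_1(\phi_1)-G_1(\phi_2))\dW_1$ and $\d\sigma=(m_2\Delta\sigma-\chi m_2\Delta\phi+R_\sigma)\dt+(G_2(\sigma_1)-G_2(\sigma_2))\dW_2$, where $R_\phi=\beta\sigma f(\phi_1)+(\beta\sigma_2-\alpha)(f(\phi_1)-f(\phi_2))$ and $R_\sigma=-\delta\sigma f(\phi_1)-\delta\sigma_2(f(\phi_1)-f(\phi_2))$ lie in $L^2(0,T;H)$ almost surely by \cref{Ass:Fun}. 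I would apply a variational (Gelfand-triple) version of the It\^o formula \eqref{Eq:ItoFormula} to $\norm{\phi(t)}_H^2$ and to $\norm{\sigma(t)}_H^2$; integrating by parts and using $\partial_n\phi=0$, the leading Cahn--Hilliard term becomes $m_1(\Psi'(\phi_1)-\Psi'(\phi_2),\Delta\phi)_D-\eps^2m_1\norm{\Delta\phi}_H^2$, so one gains the dissipative terms $\eps^2m_1\norm{\Delta\phi}_H^2$ and $m_2\norm{\nabla\sigma}_H^2$. The remaining contributions are then absorbed: the nonlinearity is written as $\Psi'(\phi_1)-\Psi'(\phi_2)=\big(\int_0^1\Psi''(\phi_2+s\phi)\,\d s\big)\phi$ and estimated using the at-most-quadratic growth of $\Psi''$ (forced by $|\Psi''|^2\le C_\Psi(1+|\Psi|)$), the three-dimensional Gagliardo--Nirenberg inequality $\norm{\phi}_{L^6}\le C\norm{\phi}_{H^2}^{1/2}\norm{\phi}_H^{1/2}$, the elliptic estimate $\norm{\phi}_{H^2}\le C(\norm{\Delta\phi}_H+\norm{\phi}_H)$ and Young's inequality, which bound $m_1(\Psi'(\phi_1)-\Psi'(\phi_2),\Delta\phi)_D$ by a small multiple of $\norm{\Delta\phi}_H^2$ plus $c\,(1+\norm{\phi_1}_V^2+\norm{\phi_2}_V^2)^4\norm{\phi}_H^2$; the chemotactic cross terms, which only survive when the two It\^o identities are added, combine into $\chi(m_1+m_2)(\nabla\phi,\nabla\sigma)_D$ and are absorbed into both dissipations via Young's inequality and $\norm{\nabla\phi}_H^2\le\norm{\phi}_H\norm{\Delta\phi}_H$; the reaction terms are controlled with the Lipschitz bound on $f$, H\"older's inequality and the interpolation $\norm{u}_{L^3}\le C\norm{u}_{L^6}^{1/2}\norm{u}_H^{1/2}$; and the It\^o corrections/brackets are handled by \cref{Ass:Noi}, which gives $\norm{G_i(u)-G_i(v)}_{\mathcal L^2(U_i,H)}^2\le C_{G_i}\norm{u-v}_H^2$. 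Summing with a suitable weight one reaches, after absorption, a differential inequality $\d(\norm{\phi}_H^2+\norm{\sigma}_H^2)+c_0(\norm{\Delta\phi}_H^2+\norm{\nabla\sigma}_H^2)\dt\le\Theta(t)(\norm{\phi}_H^2+\norm{\sigma}_H^2)\dt+\d M_t$, where $M$ is a local martingale and $\Theta\ge0$ is built from $\norm{\phi_i}_V^2,\norm{\sigma_i}_H^2\in L^\infty(0,T)$ and $\norm{\sigma_i}_V^2\in L^1(0,T)$, hence belongs to $L^1(0,T)$ almost surely.

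Here I would not integrate pathwise: $\Theta$ is almost surely integrable, but $\exp(\int_0^T\Theta)$ is not $\P$-integrable (the exponential moments of $\norm{\phi_i}_{L^\infty(0,T;V)}$ are too large), which is exactly why the statement is localized. So I introduce the stopping times $\tau_n:=\inf\{t\in[0,T]:\int_0^t\Theta(s)\,\d s>n\}\wedge T$, which increase to $T$ almost surely because $\int_0^T\Theta<\infty$ a.s.; on $[0,\tau_n]$ one has $\int_0^\cdot\Theta\le n$, so after stopping, integrating, taking the supremum in time, using the Burkholder--Davis--Gundy inequality \eqref{Lem:BurkholderDavisGundy} on $M$ together with the bounds from \cref{Ass:Noi}, absorbing, and applying the stochastic Gronwall lemma, one obtains constants $c_n$ (depending on $n$ through $e^n$ and polynomial factors) such that
\[
\E\sup_{s\in[0,\tau_n]}\big(\norm{\phi(s)}_H^2+\norm{\sigma(s)}_H^2\big)+c_0\,\E\int_0^{\tau_n}\big(\norm{\Delta\phi}_H^2+\norm{\nabla\sigma}_H^2\big)\,\d s\le c_n\big(\norm{\phi_0^1-\phi_0^2}_H^2+\norm{\sigma_0^1-\sigma_0^2}_H^2\big).
\]
Turning the $\norm{\Delta\phi}_H^2$ and $\norm{\nabla\sigma}_H^2$ terms into $\norm{\phi}_{V_2}^2$ and $\norm{\sigma}_V^2$ by elliptic regularity yields \eqref{Eq:FinalContinuous}. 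Taking $\phi_0^1=\phi_0^2$ and $\sigma_0^1=\sigma_0^2$ forces $\phi=\sigma=0$ on each $[0,\tau_n]$, hence on $[0,T]$, and then $\mu_1=\mu_2$ from $\mu_i=\Psi'(\phi_i)-\eps^2\Delta\phi_i$, which is pathwise uniqueness; since the computation is independent of the underlying probability space, it applies to any two solutions with a common driving noise, so the Gy\"{o}ngy--Krylov step invoked above goes through.

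The main obstacle is controlling the cubic Cahn--Hilliard nonlinearity $\Psi'(\phi_1)-\Psi'(\phi_2)$ in dimension three: it can be matched against the $\norm{\Delta\phi}_H^2$ dissipation only through the borderline Gagliardo--Nirenberg interpolation above, which is precisely why the stronger quadratic growth bound on $\Psi''$ and the restriction to constant mobilities are imposed --- a variable $m_1$ would destroy the integration by parts that produces that dissipation and, through the strong coupling, reintroduce the unbounded proliferation term discussed after \cref{Assumption}. This is compounded by the non-monotone chemotactic cross-diffusion, which must be absorbed by the two dissipative terms simultaneously (so the two energy identities have to be combined rather than treated separately), and by the stochastic feature that the Gronwall coefficient $\Theta$ is random with no deterministic bound, which forces the localization by the stopping times $\{\tau_n\}_n$.
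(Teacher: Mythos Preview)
Your proposal is correct and follows the same Yamada--Watanabe route as the paper: reduce to pathwise uniqueness, apply It\^o's formula to $\norm{\phi_1-\phi_2}_H^2$ and $\norm{\sigma_1-\sigma_2}_H^2$, absorb the nonlinear and chemotactic terms into the dissipations $\eps^2 m_1\norm{\Delta\phi}_H^2$ and $m_2\norm{\nabla\sigma}_H^2$, localize by stopping times, and close by Gronwall.

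The technical choices differ in two places. For the Cahn--Hilliard nonlinearity and the reaction term $\sigma_2(f(\phi_1)-f(\phi_2))$, the paper uses the embedding $H^{7/4}(D)\hookrightarrow L^\infty(D)$ together with the Ehrling-type compact inclusion $V_2\hookrightarrow H^{7/4}(D)$, which produces a Gronwall coefficient involving $\norm{\Psi(\phi_i)}_{L^1(D)}$ and $\norm{\sigma_2}_H$; you instead exploit the quadratic growth of $\Psi''$ (forced by $|\Psi''|^2\le C_\Psi(1+|\Psi|)$) and the Gagliardo--Nirenberg inequality $\norm{\phi}_{L^6}\le C\norm{\phi}_{H^2}^{1/2}\norm{\phi}_H^{1/2}$, producing a coefficient involving $(1+\norm{\phi_i}_V^2)^4$ and $\norm{\sigma_i}_V^2$. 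Correspondingly, the paper's stopping times are level sets $\tau_n=\inf\{s:\norm{\sigma_2(s)}_H^2+\norm{\Psi(\phi_1(s))}_{L^1}+\norm{\Psi(\phi_2(s))}_{L^1}\ge n\}$, whereas yours are based on the running integral $\int_0^t\Theta$. Both localizations work because the relevant quantities lie in $L^\infty(0,T)$ or $L^1(0,T)$ almost surely; the paper's version has the slight advantage that the Gronwall coefficient is bounded by a constant on $[0,\tau_n]$ \emph{before} integration (so ordinary Gronwall suffices and no stochastic Gronwall or exponential-moment issue arises), while your formulation packages this into $\int_0^{\tau_n}\Theta\le n$, which is equally effective.
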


The proof of \cref{Theorem:Main}
is structured in several steps based on intermediate auxiliary results. \smallskip

\noindent{\bf Step 1:} We introduce an approximated problem, where we consider a smooth Yosida-type regularization of the nonlinearity $\Psi'$, called $\Psil'$ for some arbitrary but fixed $\lambda>0$. Our main goal lies in showing the existence of a martingale solution $(\tphil,\tmul,\tsigmal)$ to this $\lambda$-approximated problem, deriving $\lambda$-uniform estimates and passing to the limit $\lambda \to 0$. However, to achieve the existence of a martingale solution, we have to first consider a Galerkin approximation, depending on some further parameter $n$, to the $\lambda$-approximation. The well-posedness of the Galerkin system is discussed in this step.  \smallskip

\noindent{\bf Step 2:} The goal in this step is deriving $n$-uniform estimates of the Galerkin approximation.  \smallskip  

\noindent{\bf Step 3:} Having derived $n$-uniform estimate, we can pass to the limit $n \to \infty$. This ensures the existence of a solution $(\tphil,\tmul,\tsigmal)$ to the $\lambda$-approximation. \smallskip

\noindent{\bf Step 4:} We derive $\lambda$-uniform estimates in suitable spaces.  \smallskip

\noindent{\bf Step 5:} Through the theorems of Prokhorov and Skorokhod, we show here the existence of martingale solutions. \medskip

The proof of \cref{Theorem:Main2} relies instead on proving pathwise
uniqueness for the system, which yields also strong-existence by a classical  
argument \`a la Yamada--Watanabe. The main point comes down then to proving the  continuous dependence \eqref{Eq:FinalContinuous}.\medskip

In the following we denote by $c$ a generic constant depending solely 
on the structure of the problem introduced above. If the constant 
depends on some approximation parameter such as $\lambda$ or $n$, 
we use the symbols $c_\lambda$ and $c_n$, respectively.

\subsection{Step 1: Yosida and Galerkin approximations}  \label{Sec:Step1:AppEx}
By \eqref{Ass:Psi}, it holds $\Psi''(r) \geq - C_\Psi$ for any $r \in \bbR$ and thus, the function $\gamma(r):=\Psi'(r) + C_\Psi r$
is nondecreasing and continuous. We can identify  $\gamma$  with a maximal monotone graph in $\bbR\times\bbR$ and it satisfies $\gamma(0)=0$.
For any $\lambda>0$, we define the Yosida approximation of $\gamma$, see \cite[p.99]{barbu2010nonlinear}, by
$\gamma_\lambda(r):=\frac{1}{\lambda} (r- (I+\lambda \gamma)^{-1}(r))$ for any $r \in \bbR$.
We recall that $\gamma_\lambda$ is $\frac{1}{\lambda}$-Lipschitz continuous, and it holds $\gamma_\lambda(r) \to \gamma(r)$ as $\lambda \downarrow 0$.
Then, we define the approximated double-well potential $\Psil:\bbR \to \bbR$ by
$
\Psil(r):=\Psi(0)- \frac{C_\Psi}{2} r^2+\int_0^r \gamma_\lambda(s) \ds.
$
In particular, it holds $\Psil'(r)=\gamma_\lambda(r)-C_\Psi r$ for any $r \in \bbR$ and we conclude that $\Psil'$ is $\max\{\frac{1}{\lambda},C_\Psi\}$-Lipschitz continuous. Moreover, it holds
$|\Psil(r)| \leq C_\lambda(1+|r|^2)$ for any $r \in \bbR$
for some constant $C_\lambda$ possibly depending on $\lambda$.

Consequently, we consider the following Yosida approximation of \cref{Def:System}:
  \begin{equation} \label{Def:SystemYosida}
\begin{aligned}  \d \phil &=    \div(m_1(\phil) (\nabla \mul- \chi\nabla \sigmal)) \dt + (\beta \sigmal-\alpha) f(\phil)  \dt + G_1(\phil)\dW_1, \\
  \mul &= \Psil'(\phil) -\eps^2 \Delta\phil ,\\
  \d\sigmal &=   \div(m_2(\sigmal) (\nabla\sigmal-\chi \nabla \phil)) \dt - \delta \sigmal f(\phil)\dt + G_2(\sigmal)\dW_2, 
\end{aligned}
\end{equation}
equipped with the initial values
$\phil(0) = \phi_0$, $\sigmal(0)=\sigma_0$ in $D$ and the boundary data
$0=\partial_n \phil = \partial_n \mul = \partial_n \sigmal$ on
$(0,T)\times \partial D$.
As described before, we want to show the existence of a solution to the $\lambda$-approximated system. This is stated in the following lemma.
The concept of martingale solution to the approximated problem 
\eqref{Def:SystemYosida} is analogous, {\em mutatis mutandis}, 
to the one given in Definition~\ref{Def:Weak} for the original system.

\begin{lemma} \label{Lem:ExistenceLambda}
	There exists a martingale solution $(\tilde\Omega,\tilde\calF,\tilde\calF_t,\tilde\P,\tW_1, \tW_2,\tphil,\tmul,\tsigmal)$ to \eqref{Def:SystemYosida} in the sense that
$$\begin{aligned}
    \tphil &\in L^2_{\mathcal P}(\hat\Omega;C^0([0,T];H)\cap L^2(0,T;V_2)), \\
    \tmul &\in L^2_{\mathcal P}(\hat\Omega;L^2(0,T;V)), \\
    \tsigmal &\in L^2_{\mathcal P}(\hat\Omega;C^0([0,T];H)\cap L^2(0,T;V)),
    \end{aligned}$$
    satisfy $\tmul=-\eps^2\Delta\tphil + \Psi'_\lambda(\tphil)$ and
   $$ \begin{aligned}
        (\tphil(t),v)_D+
        (m_1(\tphil)\nabla(\tmul-\chi\tsigmal),\nabla v)_{D_t}&=(\phi_0,v)_D 
        +(\beta\tsigmal-\alpha,f(\tphil)v)_{D_t} \\ &\quad +(\textstyle\int_0^t G_1(\tphil(s)) \d\tW_1(s),v)_D \\
        (\tsigmal(t),v)_D
        +(m_2(\tsigmal)\nabla(\tsigmal-\chi\tphil),\nabla v)_{D_t}&=(\sigma_0,v)_D 
        - (\delta\tsigmal,f(\tphil)v)_{D_t} \\&\quad + (\textstyle\int_0^t G_2(\tsigmal(s)) \d\tW_2(s),v)_D
    \end{aligned}$$
  for every $v\in V$, for every $t\in[0,T]$, $\tilde\bbP$-almost surely.
\end{lemma}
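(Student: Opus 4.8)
The plan is to prove \cref{Lem:ExistenceLambda} by the classical Galerkin-plus-stochastic-compactness scheme, exploiting that for $\lambda$ fixed $\Psil'$ is globally Lipschitz and $\Psil$ grows at most quadratically, so that every constant appearing in this step may legitimately depend on $\lambda$ (the delicate $\lambda$-uniform bounds being postponed to Step~4). First I would discretise in space by the $H$-orthonormal basis $\{e_j\}_{j\in\bbN}\subset V_2$ of eigenfunctions of $-\Delta$ with homogeneous Neumann data (orthogonal in $V$ and $V_2$ as well, with $H$-orthogonal projections $P_n$ onto $H_n:=\mathrm{span}\{e_1,\dots,e_n\}$ that commute with $\Delta$ and contract the $H$-, $V$- and $V_2$-norms), and look on the original stochastic basis for $\philn,\sigmaln$ valued in $H_n$ with $\muln:=-\eps^2\Delta\philn+P_n\Psil'(\philn)\in H_n$ and
\[
\begin{aligned}
(\philn(t),v)_\O+(m_1(\philn)\nabla(\muln-\chi\sigmaln),\nabla v)_{\O_t}&=(P_n\phi_0,v)_\O+((\beta\sigmaln-\alpha)f(\philn),v)_{\O_t}\\
&\quad+\Big(\textstyle\int_0^tP_nG_1(\philn(s))\,\d W_1(s),v\Big)_\O
\end{aligned}
\]
for every $v\in H_n$ and $t\in[0,T]$, together with the analogous equation for $\sigmaln$ (with $\sigma_0$, $G_2$ in place of $\phi_0$, $G_1$). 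Written for the coefficient vectors this is a finite-dimensional It\^o SDE with continuous, linearly growing drift and Lipschitz diffusion (by \cref{Ass:Noi}), so it admits a local martingale solution — or, after a harmless mollification of the merely continuous $m_1,m_2$ into Lipschitz functions, a local strong one, the sequel being insensitive to this — which the uniform bounds below make global on $[0,T]$.

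The heart of the argument will be a uniform-in-$n$ energy estimate. I would apply It\^o's formula \eqref{Eq:ItoFormula} (after a harmless $C^2$-smoothing of $\Psil$, if needed) to the approximate Ginzburg--Landau functional $\calE_\lambda(u,w):=\int_\O\big[\frac{\eps^2}{2}|\nabla u|^2+\Psil(u)+\frac12|w|^2-\chi uw\big]\dx$ evaluated along $(\philn,\sigmaln)$, and, separately, to $\frac12\norm{\philn}_H^2$ (to control low-order norms and spatial means). The point is to keep the chemotactic coupling $-\chi uw$: the associated test functions are $\muln-\chi\sigmaln$ in the $\philn$-equation and $\sigmaln-\chi\philn$ in the $\sigmaln$-equation, whose mobility terms are genuinely dissipative ($\ge m_0\norm{\nabla(\muln-\chi\sigmaln)}_H^2$, resp.\ $\ge m_0\norm{\nabla(\sigmaln-\chi\philn)}_H^2$), so that no smallness of $\chi$ is required; the sign-indefinite source and It\^o-correction terms are then controlled using $\norm{f}_\infty\le1$, the $\lambda$-dependent bounds $\norm{\Psil'(\philn)}_H+|(\muln)_\O|\le C_\lambda(1+\norm{\philn}_H)$ (which, via Poincar\'e--Wirtinger, control $\norm{\muln-\chi\sigmaln}_H$ by the dissipation plus lower-order terms), and the estimate $\frac{\eps^2}{2}\norm{\nabla\philn}_H^2\le\calE_\lambda(\philn,\sigmaln)+C(1+\norm{\philn}_H^2)$; no cross It\^o correction appears, $W_1$ and $W_2$ being independent. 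A stopping-time argument (bounding first, at fixed $t$, the relevant expectation without the supremum, then reinstating it through the Burkholder--Davis--Gundy inequality \eqref{Lem:BurkholderDavisGundy}) and Gronwall's lemma then give, uniformly in $n$, bounds for $\philn$ in $L^2(\Omega;L^\infty(0,T;V))$, for $\norm{\Psil(\philn)}_{L^1(\O)}$, for $\sigmaln$ in $L^2(\Omega;L^\infty(0,T;H)\cap L^2(0,T;V))$ and for $\muln$ in $L^2(\Omega;L^2(0,T;V))$; finally $\eps^2\Delta\philn=P_n\Psil'(\philn)-\muln$ together with elliptic regularity gives a bound for $\philn$ in $L^2(\Omega;L^2(0,T;V_2))$.

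It remains to pass to the limit $n\to\infty$. Estimating the time increments of $\philn$ and $\sigmaln$ in $V'$ from the equations (the drifts being bounded in $L^2(0,T;V')$ and the stochastic integrals controlled by Burkholder--Davis--Gundy), the Aubin--Lions--Simon lemma and an Aldous-type criterion give tightness of the laws of $(\philn,\muln,\sigmaln,W_1,W_2)$ on $\big(C^0([0,T];V_2')\cap L^2(0,T;V)\big)\times\big(L^2(0,T;V)\text{-weak}\big)\times\big(C^0([0,T];V')\cap L^2(0,T;H)\big)\times C^0([0,T];\tilde U_1)\times C^0([0,T];\tilde U_2)$. By Prokhorov's theorem and the Jakubowski--Skorokhod representation there is a new probability space carrying $(\tphil^n,\tmul^n,\tsigmal^n,\tW_1^n,\tW_2^n)$ with the same laws, converging a.s.\ in these topologies to $(\tphil,\tmul,\tsigmal,\tW_1,\tW_2)$; endowed with the normal filtration generated by the limits, $\tW_1,\tW_2$ are independent cylindrical Wiener processes and each $n$-tuple still solves the Galerkin system. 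Letting $n\to\infty$ in the variational identity: the linear terms pass by the a.s.\ convergence; $m_i(\tphil^n)\nabla v\to m_i(\tphil)\nabla v$ strongly in $L^2(0,T;H)$ by dominated convergence (since $\tphil^n\to\tphil$ a.e., from the $L^2(0,T;V)$-convergence, and $m_0\le m_i\le m_\infty$), paired against $\nabla(\tmul^n-\chi\tsigmal^n)$, which converges weakly in $L^2(0,T;H)$; $\Psil'(\tphil^n)\to\Psil'(\tphil)$ and $f(\tphil^n)\to f(\tphil)$ in $L^2(0,T;H)$ by Lipschitz continuity; the stochastic integrals converge by the standard convergence lemma for It\^o integrals under Skorokhod representation; and $P_n\to I$ strongly gives $\tmul=-\eps^2\Delta\tphil+\Psil'(\tphil)$. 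Lower semicontinuity of the norms and preservation of laws transfer the $L^2(\tilde\Omega;\cdot)$ bounds to the limit, and a stochastic It\^o-type lemma in the Gelfand-triple setting (the stochastic analogue of the Lions--Magenes lemma, using $\tphil\in L^2(0,T;V_2)$ with the equation holding in $V_2'$ and $\tsigmal\in L^2(0,T;V)$ in $V'$) gives $\tphil,\tsigmal\in C^0([0,T];H)$ a.s.; this is the sought martingale solution.

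The main obstacle is the interplay of the two non-standard features. The merely continuous, non-constant mobilities exclude any monotonicity/variational framework and preclude uniqueness already at the Galerkin level, so the passage $n\to\infty$ must rely solely on the strong $L^2(0,T;H)$-compactness of $\philn$ (hence on the time-regularity estimates and the care in the Aubin--Lions step), and one can only hope for martingale rather than pathwise solutions. The chemotactic coupling is what forces the test functions $\muln-\chi\sigmaln$ and $\sigmaln-\chi\philn$ instead of $\muln$ and $\sigmaln$: the latter choice would leave indefinite gradient cross-terms controllable only under the unassumed smallness $\chi m_\infty<m_0$. The stopping-time/Burkholder--Davis--Gundy bookkeeping and the passage of the stochastic integral through the Skorokhod space are routine but technical.
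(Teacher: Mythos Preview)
Your proposal is correct and follows the same overall architecture as the paper: a Galerkin approximation on the eigenbasis of the Neumann Laplacian (with mollified mobilities $m_{i,n}$), uniform-in-$n$ estimates allowed to depend on $\lambda$, and passage to the limit via tightness, Prokhorov, and the Jakubowski--Skorokhod representation theorem. The identification of the stochastic integrals and the recovery of $C^0([0,T];H)$-regularity for $\tphil,\tsigmal$ are handled as you indicate.

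The one genuine methodological difference is in the energy estimate. You apply It\^o's formula to the \emph{coupled} Ginzburg--Landau functional $\calE_\lambda(u,w)=\int_\O[\frac{\eps^2}{2}|\nabla u|^2+\Psil(u)+\frac12|w|^2-\chi uw]$, which produces the test functions $\muln-\chi\sigmaln$ and $\sigmaln-\chi\philn$ and makes both mobility terms manifestly coercive ($\geq m_0\|\nabla(\cdot)\|_H^2$) without any balancing constant; the price is that $\calE_\lambda$ is not bounded below and you must recover $\|\nabla\philn\|_H^2$ and $\|\sigmaln\|_H^2$ from it via a separate $\|\philn\|_H^2$ estimate. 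The paper instead applies It\^o's formula separately to $\calE_\lambda(\philn):=\frac{\eps^2}{2}\|\nabla\philn\|_H^2+\|\Psil(\philn)\|_{L^1(\O)}$ and to $K\|\sigmaln\|_H^2$ (together with $|(\philn)_D|^2$), producing the ``naive'' test functions $\muln$ and $\sigmaln$; the chemotactic cross-terms $\chi(m_{1,n}\nabla\sigmaln,\nabla\muln)$ and $2K\chi(m_{2,n}\nabla\sigmaln,\nabla\philn)$ are then absorbed by Young's inequality after choosing $K$ large enough so that $Km_0>\frac{m_\infty^2\chi^2}{2m_0}$. Your route is closer to the thermodynamic structure of the model and avoids the artificial weight $K$; the paper's route keeps the $\phi$- and $\sigma$-estimates more decoupled, which is convenient when the same computations are revisited for the $\lambda$-uniform bounds in Step~4. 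For the tightness/compactness step the paper uses fractional time-regularity in $W^{\bar s,\ell}(0,T;V^*)$ (via the Flandoli--Gatarek lemma) and carries the stochastic integrals $G_i(\cdot)\cdot W_i$ as separate entries in the Skorokhod tuple, whereas you use an Aldous-type criterion and a direct convergence lemma for the It\^o integrals; both are standard and equivalent here.
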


The proof of the lemma is shifted to the end of \Cref{Sec:Step3:LimN} as it requires a further approximation on the system, which we now introduce.
Let $(e_j)_{j\in\bbN}\subset V_2$ and $(\alpha_j)_{j\in\bbN} \subset \R$ 
be sequences of eigenfunctions and eigenvalues of the 
negative Laplace operator with homogeneous Neumann conditions, respectively, i.e.
 $ -\Delta e_j = \alpha_j e_j$ in $D$ with
  $\nabla e_j \cdot n_{\partial D} = 0$ on $\partial D$.
We normalize, so that $(e_j)_j$ is an orthonormal system of $H$
and an orthogonal system in $V$.
For every $n\in\bbN$, we define the finite
dimensional space $H_n:=\operatorname{span}\{e_1,\ldots,e_n\}\subset V_2$,
endowed with the $\norm{\cdot}_H$-norm. 

Let now $i\in\{1,2\}$. We define the approximated operator $G_{i,n}:H_n\to\calL^2(U_i,H_n)$  as
$G_{i,n}(v)u_k^i:=\sum_{j=1}^n(G_i(v)u_k^i, e_j)_H e_j$ for any 
$v\in H_n$ and
$k\in\bbN$.
One can check that $G_{i,n}$ is well-defined: indeed, for every $v\in H_n$ 
and every $n\in\bbN$, thanks to \eqref{Ass:Noi} we have $G_{i,n}(v)\in\calL^2(U,H_n)$ since
\begin{equation}\begin{aligned}\label{est_Gn}
  \norm{G_{i,n}(v)}_{\calL^2(U_i,H_n)}^2
  =\sum_{k=0}^\infty\norm{G_{i,n}(v)u_k^i}_H^2 &= 
  \sum_{k=0}^\infty\sum_{j=1}^n|(G_i(v)u_k^i, e_j)_H|^2 \\&\leq
  \sum_{k=0}^\infty\sum_{j=1}^\infty|(G_i(v)u_k^i, e_j)_H|^2\\ 
  &=\sum_{k=0}^\infty\norm{G_i(v)u_k^i}_H^2\\
  &= \norm{G_i(v)}^2_{\calL^2(U_i,H)}
  \leq C_{G_i}|D|.
\end{aligned}\end{equation}
A similar computation shows that $G_{i,n}$ is $C_{G_i}$-Lipschitz-continuous
from $H_n$ to $\calL^2(U_i,H_n)$. Indeed, for every 
for every $v_1,v_2\in H_n$ 
and every $n\in\bbN$ one has thanks to \eqref{Ass:Noi} that
\begin{equation}\begin{aligned}\label{est_Gn2}
  \norm{G_{i,n}(v_1)-G_{i,n}(v_2}_{\calL^2(U_i,H_n)}^2
  &=\sum_{k=0}^\infty\norm{G_{i,n}(v_1)u_k^i-G_{i,n}(v_2)u_k^i}_H^2 \\
  &= 
  \sum_{k=0}^\infty\sum_{j=1}^n
  |(G_i(v_1)u_k^i-G_i(v_2)u_k^i, e_j)_H|^2 \\
  &\leq
  \sum_{k=0}^\infty\sum_{j=1}^\infty
  |(G_i(v_1)u_k^i-G_i(v_2)u_k^i, e_j)_H|^2\\ 
  &=\sum_{k=0}^\infty\norm{G_i(v_1)u_k^i-G_i(v_2)u_k^i}_H^2\\
  &= \norm{G_i(v_1)-G_i(v_2)}^2_{\calL^2(U_i,H)}
  \leq C_{G_i}\|v_1-v_2\|_H^2.
\end{aligned}\end{equation}
Note that the estimates \eqref{est_Gn}-\eqref{est_Gn2} are independent of the parameter $n$.

Using typical approximations via mollifiers, 
for $i\in\{1,2\}$, the mobility function $m_i$ is approximated by $m_{i,n}$ satisfying $m_0\leq m_{i,n}(r)\leq m_\infty$ for any $r \in \bbR$ and
$m_{i,n}\to m_i$ in $C^0([a,b])$ as $n \to \infty$ for every $[a,b]\subset\mathbb R$.
As for the initial data, we define naturally
$\phi_0^n:=\sum_{j=1}^n(\phi_0, e_j)_He_j\in H_n$ and  $\sigma_0^n:=\sum_{j=1}^n(\sigma_0, e_j)_He_j\in H_n$.

Summarizing, we consider the Yosida system's Galerkin approximation
  \begin{equation} \label{Def:SystemGalerkin}
\begin{aligned}  \d \philn  &=  \div(m_{1,n}(\philn) (\nabla \muln- \chi\nabla  \sigmaln )) \dt + (\beta \sigmaln-\alpha) f(\philn)  \dt \\ &\quad+ G_{1,n}(\philn)\dW_1, \\
  \muln&=\Psil'(\philn) -\eps^2 \Delta\philn  ,\\
  \d\sigmaln &= \div(m_{2,n}(\sigmaln) (\nabla\sigmaln-\chi \nabla \philn)) \dt - \delta \sigmaln f(\philn) \dt \\ &\quad+ G_{2,n}(\sigmaln)\dW_2, 
\end{aligned}
\end{equation}
equipped with the initial values $\philn(0)=\phi_0^n$ and $\sigmaln(0)=\sigma_0^n$ in $D$, as well as no-flux boundary conditions.
The variational formulation of \eqref{Def:SystemGalerkin} reads
\begin{equation} \label{Def:SystemGalerkinVariational}\begin{aligned} 
	(\philn(t),h_n)_D
	&+ (m_{1,n}(\philn)\nabla(\muln-\chi\sigmaln),\nabla h_n)_{D_t}\\
 &\quad =(\phi_0^n,h_n)_D + (\beta \sigmaln - \alpha,f(\philn) h_n)_{D_t} \\
	 &\qquad +
	(\textstyle\int_0^tG_{1,n}(\philn(s))\dW_1(s),h_n)_D \\
	(\sigmaln(t),h_n)_D
		&+(m_{2,n}(\sigmaln)\nabla(\sigmaln-\chi\philn),\nabla h_n)_{D_t}\\
  &\quad=(\sigma_0^n,h_n)_D
		- \delta (\sigmaln,f(\philn) h_n)_{D_t} \\
		&\qquad   +
		(\textstyle\int_0^tG_{2,n}(\philn(s))\dW_2(s),h_n)_D,
\end{aligned}
\end{equation}
for any $h_n \in H_n$,
for every $t\in[0,T]$, $\bbP$-almost surely.

It is a standard procedure to show the existence of a solution $(\philn,\muln,\sigmaln)$ 
to the Galerkin system \eqref{Def:SystemGalerkin}: this is done in detail e.g. in \cite[p.3813--3814]{scarpa2021degenerate}
for the stochastic Cahn--Hilliard equation, so we omit here the technical details.
The idea is to represent $\philn$, $\muln$ and $\sigmaln$ as the sum of the basis functions in $H_n$ and reduce the problem to an abstract evolution equation in a finite-dimensional space. Then, by standard theory, it follows that the Galerkin system \cref{Def:SystemGalerkin} admits a unique (probabilistically strong) solution
\[
  \philn, \muln, \sigmaln \in L^\ell\big(\Omega; C^0([0,T]; H_n)\big)
  \quad\forall\,\ell\in\mathopen[2,\infty\mathclose).
\]

\subsection{Step 2: uniform estimates in $n$, with $\lambda$ fixed} \label{Sec:Step2:UniformN}
In this step, we show that the approximated solution $(\philn, \muln,\sigmaln)$
satisfies an energy estimate, independently of $n$,
with $\lambda>0$ being fixed. Ultimately, this step ensures the existence of a solution to the $\lambda$-approximated system when passing to the limit $n \to \infty$ in the next step in \Cref{Sec:Step3:LimN}.

\begin{lemma}
There holds the estimate:
    \begin{equation} \label{Est:lambdaFinal}\begin{aligned}  &\E\sups\norm{\philn(s)}_V^{\ell} +
	\E\sups\norm{\Psil(\philn(s))}_{L^1(D)}^{\ell/2}
	+\E\norm{\nabla\muln}_{L^2(0,t; H)}^{\ell} \\&\quad +\E\sups|(\muln(s))_D|^{\ell/2} + \bbE\sup_{s \in [0,T]}\norm{\sigmaln(s)}_H^\ell + \bbE\|\nabla \sigmaln\|_{L^2(0,t;H)}^\ell
  \\ &\leq c_\lambda \Big(1+ 
	\norm{\phi_0}_V^{\ell}  + \norm{\sigma_0}_H^\ell\Big).
\end{aligned}
\end{equation}
\end{lemma}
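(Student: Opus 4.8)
By Step~1 the Galerkin system \eqref{Def:SystemGalerkin} is a finite-dimensional SDE whose solution $(\philn,\muln,\sigmaln)$ lies in $L^\ell(\Omega;C^0([0,T];H_n))$ for every $\ell\geq2$; hence every expectation below is a priori finite and all absorptions are legitimate (if one prefers, one argues first on $[0,\tau_R]$ with $\tau_R=\inf\{t:\norm{\philn(t)}_H+\norm{\sigmaln(t)}_H\geq R\}$ and lets $R\to\infty$ at the end). The plan is to run the Ginzburg--Landau energy balance, Itô-corrected, but on the \emph{modified} functional
\[
\mathcal{F}_\lambda(\phi,\sigma):=\frac{\eps^2}{2}\norm{\nabla\phi}_H^2+\int_D\Psil(\phi)\dx+\tfrac12\norm{\sigma}_H^2-\chi(\phi,\sigma)_D+\frac{K}{2}\norm{\phi}_H^2,
\qquad K:=2\chi^2+C_\Psi+1 .
\]
The point of the extra term is coercivity: using $\Psil(r)\geq\Psi(0)-\tfrac{C_\Psi}{2}r^2$ (which holds since $\gamma_\lambda$ is nondecreasing with $\gamma_\lambda(0)=0$, so $\int_0^r\gamma_\lambda\geq0$) and Young's inequality on $-\chi(\phi,\sigma)_D$, one gets $\mathcal{F}_\lambda(\phi,\sigma)\geq c\,(\norm{\phi}_V^2+\norm{\sigma}_H^2)$ and, writing $\int_D|\Psil(\phi)|\leq\int_D\Psil(\phi)+C_\Psi\norm{\phi}_H^2$, also $\mathcal{F}_\lambda(\phi,\sigma)\geq c\,\norm{\Psil(\phi)}_{L^1(D)}$; moreover $\mathcal{F}_\lambda(\phi_0^n,\sigma_0^n)\leq C_\lambda(1+\norm{\phi_0}_V^2+\norm{\sigma_0}_H^2)$, because $|\Psil|\leq C_\lambda(1+|\cdot|^2)$ and $\phi_0^n=P_n\phi_0$, $\sigma_0^n=P_n\sigma_0$ are bounded in $V$, resp.\ $H$. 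This modification is necessary: since no smallness is imposed on $\chi$ and (for $\chi>0$) there is no maximum principle for $\sigma$, the plain energy $\mathcal{E}_\lambda$ is not sign-definite and cannot by itself control $\norm{\philn}_V$.

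First I would apply the Itô formula \eqref{Eq:ItoFormula} in $H_n\times H_n$ to $t\mapsto\mathcal{F}_\lambda(\philn(t),\sigmaln(t))$; this is admissible because $\Psil\in C^2$ with bounded second derivative for fixed $\lambda$, so on the finite-dimensional $H_n$ the functional $\mathcal{F}_\lambda$ is $C^2$ with derivatives uniformly continuous on bounded sets. Since $\delta_\phi\mathcal{F}_\lambda=\muln-\chi\sigmaln+K\philn$ and $\delta_\sigma\mathcal{F}_\lambda=\sigmaln-\chi\philn$, testing against \eqref{Def:SystemGalerkin} and integrating the divergence terms by parts produces the dissipation $-\int_0^t m_{1,n}\norm{\nabla(\muln-\chi\sigmaln)}_H^2-\int_0^t m_{2,n}\norm{\nabla(\sigmaln-\chi\philn)}_H^2\leq-m_0D(t)$, where $D(t):=\int_0^t\big(\norm{\nabla(\muln-\chi\sigmaln)}_H^2+\norm{\nabla(\sigmaln-\chi\philn)}_H^2\big)\ds$, together with: (i) the source terms carrying $f,\alpha,\beta,\delta$; (ii) the extra drift $-K(\nabla\philn,m_{1,n}\nabla(\muln-\chi\sigmaln))_D+K(\philn,(\beta\sigmaln-\alpha)f(\philn))_D$ coming from $\tfrac{K}{2}\norm{\philn}_H^2$; (iii) a local martingale $\mathcal{M}$; and (iv) the Itô corrections. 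For (iv), using $G_{i,n}(v)u_k^i=P_ng_{i,k}(v)$, the bound $\norm{\nabla P_nw}_H\leq\norm{\nabla w}_H$ (as the $e_j$ are Laplace--Neumann eigenfunctions), $\nabla g_{1,k}(\philn)=g_{1,k}'(\philn)\nabla\philn$, $|\Psil''|\leq\max\{\lambda^{-1},C_\Psi\}$ and \eqref{Ass:Noi}, the corrections are bounded by $\tfrac{\eps^2C_{G_1}}{2}\norm{\nabla\philn}_H^2+c_\lambda$.

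Next I would dispatch the dangerous terms. The key auxiliary estimate is on the spatial mean of the chemical potential: testing $\muln=\Psil'(\philn)-\eps^2\Delta\philn$ against the constant and using $|\Psil'(r)|\leq(\lambda^{-1}+C_\Psi)|r|$ gives $|(\muln)_D|\leq c_\lambda\norm{\philn}_H\leq c_\lambda\mathcal{F}_\lambda^{1/2}$, whence by Poincaré--Wirtinger $\norm{\muln}_H\leq c\big(\norm{\nabla(\muln-\chi\sigmaln)}_H+\norm{\nabla(\sigmaln-\chi\philn)}_H\big)+c_\lambda\norm{\philn}_V$. Each source term in (i) and (ii) is then estimated by Cauchy--Schwarz together with $|f|\leq1$ and Young's inequality, producing a small multiple $\eta$ of $D(t)$ — absorbed into $m_0D(t)$ — plus a term $\leq c_\lambda\int_0^t\!\big(\mathcal{F}_\lambda(\philn(s),\sigmaln(s))+1\big)\ds$, where $\norm{\nabla\philn}_H^2\leq c\,\mathcal{F}_\lambda$ is used to reabsorb the gradient term from (iv). Collecting everything yields, $\bbP$-a.s.\ for all $t\in[0,T]$, the pathwise inequality $\mathcal{F}_\lambda(\philn(t),\sigmaln(t))+\tfrac{m_0}{2}D(t)\leq\mathcal{F}_\lambda(\phi_0^n,\sigma_0^n)+c_\lambda\int_0^t(\mathcal{F}_\lambda(\philn,\sigmaln)+1)\ds+\mathcal{M}(t)$; setting $Z(t):=\sup_{s\in[0,t]}\mathcal{F}_\lambda(\philn(s),\sigmaln(s))+\tfrac{m_0}{2}D(t)$ and using $\mathcal{F}_\lambda\geq0$, this gives $Z(t)\leq 2\mathcal{F}_\lambda(\phi_0^n,\sigma_0^n)+2c_\lambda\int_0^t(Z(s)+1)\ds+2\sup_{s\leq t}|\mathcal{M}(s)|$.

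Finally, to reach the $\ell$-th powers in \eqref{Est:lambdaFinal}, I would raise this to the power $p:=\ell/2\geq1$, take expectations (Hölder on the time integral), and control the martingale via the Burkholder--Davis--Gundy inequality \eqref{Lem:BurkholderDavisGundy}: the quadratic variation of $\mathcal{M}$ is $\leq c\,D(t)+c_\lambda\int_0^t\mathcal{F}_\lambda(\philn,\sigmaln)\ds$ (same bounds as in (iv), plus Poincaré--Wirtinger for the mean-value parts of $\muln-\chi\sigmaln$ and $\sigmaln-\chi\philn$), so $\bbE\sup_{s\leq t}|\mathcal{M}(s)|^p\leq c_{p,\lambda}\,\bbE\big(D(t)^{p/2}+(\int_0^t\mathcal{F}_\lambda\ds)^{p/2}\big)$; since $D(t)\leq\tfrac{2}{m_0}Z(t)$ and $\int_0^t\mathcal{F}_\lambda\ds\leq t\,Z(t)$, Young's inequality absorbs $\tfrac12\bbE Z(t)^p$ to the left, and Gronwall's lemma then yields $\bbE Z(t)^p\leq c_{\lambda,\ell,T}\big(1+\mathcal{F}_\lambda(\phi_0^n,\sigma_0^n)^p\big)\leq c_\lambda(1+\norm{\phi_0}_V^\ell+\norm{\sigma_0}_H^\ell)$. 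All six quantities on the left of \eqref{Est:lambdaFinal} are dominated by powers of $Z$: $\norm{\philn}_V^2,\norm{\Psil(\philn)}_{L^1(D)},\norm{\sigmaln}_H^2\leq c\,\mathcal{F}_\lambda\leq c\,Z(T)$; $\int_0^T(\norm{\nabla\muln}_H^2+\norm{\nabla\sigmaln}_H^2)\ds\leq c\,(D(T)+\int_0^T\norm{\nabla\philn}_H^2\ds)\leq c\,Z(T)$; and $|(\muln)_D|^2\leq c_\lambda\norm{\philn}_H^2\leq c_\lambda Z(T)$; raising to the appropriate exponent and taking expectations concludes. The step I expect to be the main obstacle is the third one — closing the estimate in the presence of the strong $\phi$--$\sigma$ coupling and of a chemical potential that is not a priori in $H$: it requires precisely the right modified energy $\mathcal{F}_\lambda$, the separate control of $(\muln)_D$, and enough care that every hazardous term is either a small multiple of $D(t)$, a multiple of $\int_0^t\mathcal{F}_\lambda$, or handled by BDG and Young. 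All constants here are permitted to blow up as $\lambda\downarrow0$, which is what makes the bookkeeping routine at this stage but will be the genuine difficulty in Step~4.
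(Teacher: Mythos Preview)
Your argument is correct, but it follows a different route from the paper. The paper proceeds in four separate pieces: (i) It\^o's formula for $K\norm{\sigmaln}_H^2$, yielding a $\sigma$-estimate with a residual $\norm{\nabla\philn}_{L^2(0,t;H)}^2$ on the right; (ii) It\^o's formula for the \emph{partial} energy $\mathcal E_\lambda(\philn)=\tfrac{\eps^2}{2}\norm{\nabla\philn}_H^2+\norm{\Psil(\philn)}_{L^1(D)}$, yielding a $\phi$-estimate with a residual $\norm{\nabla\sigmaln}_{L^2(0,t;H)}^2$; (iii) It\^o's formula for $|(\philn)_D|^2$; and (iv) summing, choosing the weight $K$ so that the residual $\nabla\sigmaln$-term from~(ii) is absorbed by the dissipation in~(i), and closing by Gronwall. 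In particular the paper never introduces the coupling term $-\chi(\phi,\sigma)_D$ into the energy and works throughout with the ``bare'' dissipations $\norm{\nabla\muln}_H^2$ and $\norm{\nabla\sigmaln}_H^2$.

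Your approach instead runs a single It\^o computation on the full (modified) system energy $\mathcal F_\lambda$, which is exactly the Ginzburg--Landau functional $\mathcal E(\phi,\sigma)$ of the introduction augmented by $\tfrac{K}{2}\norm{\phi}_H^2$ to force coercivity. This is closer to the underlying gradient-flow structure: the fluxes $\nabla(\mu-\chi\sigma)$ and $\nabla(\sigma-\chi\phi)$ appear directly as the dissipation, and the cross-couplings are absorbed automatically rather than by a weight-matching argument. The price is that at the end you must reconstruct $\norm{\nabla\muln}$ and $\norm{\nabla\sigmaln}$ from $D(t)$ and $\norm{\nabla\philn}$, and that your martingale quadratic variation involves $D(t)$ itself (hence the extra Young step absorbing $\tfrac12\mathbb E Z(t)^p$). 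The paper's modular approach avoids this by keeping the natural norms throughout, which also makes it easier to recycle parts of the computation in Step~4. Both routes lead to the same estimate; yours is more compact, the paper's more granular.
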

\begin{proof}
 We split the proof into four parts. Firstly, we derive a $\philn$-dependent estimate for $\sigmaln$ by a suitable application of It\^o's formula. Similarly, we derive a $\sigmaln$-dependent estimate for $\philn$ and its space average. Lastly, we add the three estimates and by an application of Gronwall's inequality, we obtain a $n$-uniform estimate. \smallskip

\noindent{\bf (i)~} We write It\^o's formula \cref{Eq:ItoFormula} for $K\norm{\sigmaln}_H^2$ for $K>0$ being a constant that is determined later on. This procedure yields the equation
\begin{equation} \label{Eq:TestedGalerkinSigma}\begin{aligned}
  &K\norm{\sigmaln(t)}_H^2+ 2K(m_{2,n}(\sigmaln),|\nabla \sigmaln|^2)_{D_t}  
  +2K\delta  (f(\philn),|\sigmaln|^2)_{D_t}
  \\ &=K\norm{\sigma_0^n}_H^2
+2K\chi\big(m_{2,n}(\sigmaln(s)) \nabla \sigmaln(s),\nabla \philn(s)\big)_{D_t} 
  \\[-.2cm]&\quad +\!K \norm{G_{2,n}(\sigmaln(s))}^2_{L^2(0,t;\calL^2(U_2,H))}\!+\!2K\!\!\int_0^t\!\left(\sigmaln(s),G_{2,n}(\sigmaln(s))\!\dW_2(s)\right)_D
  \end{aligned}
  \end{equation}
  for every $t\in[0,T]$, $\mathbb P$-almost surely.
  On the left-hand side of the equality, we may use $m_{2,n}(\sigmaln) \geq m_0$ and $f(\phil) \geq 0$, see  the assumptions \cref{Ass:Mob} and \cref{Ass:Fun}.
On the right-hand side, we exploit the definition of the approximate initial value $\sigma_0^n$ to obtain $\|\sigma_0^n\|_H \leq \|\sigma_0\|_H$. Concerning the second term on the right-hand side, we have by the Young inequality and the bound of the mobility function, see \cref{Ass:Mob},
$$\begin{aligned}&2K\chi\big(m_{2,n}(\sigmaln(s)) \nabla \sigmaln(s),\nabla \philn(s)\big)_{D_t} \\&\leq Km_0 \|\nabla \sigmaln\|_{L^2(0,t;H)}^2 + \frac{Km_\infty^2\chi^2}{m_0}  \|\nabla \philn\|_{L^2(0,t;H)}^2.
\end{aligned}$$
We insert the estimates back into \cref{Eq:TestedGalerkinSigma} to obtain
\begin{equation} \label{Eq:TestedGalerkinSigma1}\begin{aligned}
  &K\norm{\sigmaln(t)}_H^2+ Km_0\|\nabla \sigmaln\|_{L^2(0,t;H)}^2  
  \\ &\leq K\norm{\sigma_0}_H^2+
\frac{Km_\infty^2\chi^2}{m_0}  \|\nabla \philn\|_{L^2(0,t;H)}^2 +K \norm{G_{2,n}(\sigmaln(s))}^2_{L^2(0,t;\calL^2(U_2,H))}
  \\[-.2cm]&\quad +2K\int_0^t\left(\sigmaln(s),G_{2,n}(\sigmaln(s))\dW_2(s)\right)_D.
  \end{aligned}
  \end{equation}
At this point, by \eqref{est_Gn} one has that
\[
  \norm{G_{2,n}(\sigmaln)}^2_{L^2(0,T;\calL^2(U_2,H))}\leq c,
\]
which implies by the Burkholder--Davis--Gundy inequality, see \cref{Lem:BurkholderDavisGundy},
that for every $\ell\geq2$,
    $$\begin{aligned} &\E\sup_{s\in[0,T]}\left|\int_0^s
    \left(\sigmaln(r),G_{2,n}(\sigmaln(r))\dW_2(r)\right)_D
    \right|^{\ell/2}\\
    &\qquad\leq c\E\left(\int_0^t
    \|\sigmaln(s)\|_H^2\|G_{2,n}(\sigmaln(s))\|^2_{\mathcal L^2(U_1,H)}\ds 
    \right)^{\ell/4}\\
    &\qquad\leq c
    \E\|\sigmaln\|_{L^2(0,t; H)}^{\ell/2}.
    \end{aligned}
    $$
Consequently, we take the power of $\ell/2$, the supremum in time, and then the expectation $\bbE$ in inequality \eqref{Eq:TestedGalerkinSigma1}: we deduce that
\begin{equation} \label{Eq:TestedGalerkinSigma2}\begin{aligned}
  &K^{\ell/2}\bbE\sup_{s \in [0,T]}\norm{\sigmaln(s)}_H^\ell+ (Km_0)^{\ell/2} \bbE\|\nabla \sigmaln\|_{L^2(0,t;H)}^\ell
  \\ &\leq c_{K,\ell} \Big(1+\norm{\sigma_0}_H^\ell + \bbE\|\nabla \philn\|_{L^2(0,t;H)}^\ell+
  \bbE\norm{\sigmaln}_{L^2(0,t;H)}^{\ell/2}\Big),
  \end{aligned}
  \end{equation} 
where the implicit constant $c_{K,\ell}$ is independent of
$\lambda$ and $n$.

\noindent{\bf (ii)~} Next, we derive a bound for $\bbE\sup_{s \in [0,T]}\norm{\nabla\philn(s)}_H^\ell$ that allows us to absorb the $\nabla\philn$-dependency on the right-hand side of \cref{Eq:TestedGalerkinSigma2} by a Gronwall argument.
We consider the energy functional
\begin{equation}\label{energy}
\mathcal E_\lambda(v):=\frac{\eps^2}{2} \|\nabla v\|_H^2 + \|\Psil(v)\|_{L^1(D)} , \quad v\in H_n.
\end{equation}
As proved in \cite[p.3829]{scarpa2021degenerate}, $\mathcal E_\lambda:H_n \to [0,\infty)$ is twice Fr\'echet differentiable with 
\[
\begin{aligned}
D\mathcal E_\lambda(v)[h]&=\eps^2 (\nabla v,\nabla h)_D+(\Psil'(v),h)_D
&&\forall h \in H_n, \\
D^2\mathcal E_\lambda(v)[h,k]&=\eps^2 (\nabla h,\nabla k)_D+ (\Psil''(v)h,k)_D &&\forall h,k\in H_n.
\end{aligned}
\]
We  apply It\^o's formula \cref{Eq:ItoFormula} to $\philn \mapsto \mathcal E_\lambda(\philn)$.
To this end, note that by \eqref{Def:SystemGalerkin} we have 
$D\mathcal E_\lambda(\philn)=\muln$, and thus, we obtain 
\begin{equation}\label{Eq:TestedGalerkinPhi0}\begin{aligned}
	&\mathcal E_\lambda(\philn(t))
	+(m_{1,n}(\philn),|\nabla\muln|^2)_{D_t}  \\
	&=\mathcal E_\lambda(\phi_0^{n}) 
	+(\muln f(\philn),\beta\sigmaln-\alpha)_{D_t}+\chi(m_{1,n}(\philn) \nabla \sigmaln,\nabla\muln)_{D_t}
 \\ &\quad +\int_0^t\left(\muln(s), G_{1,n}(\philn(s))\,\d W_1(s)\right)_D\\
 &\quad+\frac12\int_0^t\sum_{k=0}^\infty\Big[\|\nabla G_{1,n}(\philn(s))u_k^1\|_H^2
+\big(\Psil''(\philn(s)),|G_{1,n}(\philn(s))u_k^1|^2\big)_D\Big]\ds.
\end{aligned}\end{equation}

 First, we note that it holds $\calE_\lambda(\phi_0^n)=\frac{\eps^2}{2} \|\nabla \phi_0^n\|_H^2 + \|\Psi_\lambda(\phi_0^n)\|_{L^1(D)}$. From the definition of the approximate initial value
$\phi_0^n$, we have 
$\|\nabla\phi_0^{n}\|_H\leq\norm{\nabla\phi_0}_H$. Since $\Psil$ is bounded by a quadratic function due to the properties of the Yosida approximation
and $(\phi_0^n)_n$ is bounded in $H$ thanks to the properties of
the orthogonal projection on $H_n$, 
we have that 
\[
\norm{\Psil(\phi_0^{n})}_{L^1(D)}\leq 
c_\lambda\big(1+\norm{\phi_0^n}_H^2\big)\leq 
c_\lambda\big(1+\norm{\phi_0}_H^2\big).
\]
Now, note that 
taking the space average of the equation governing $\muln$, see \cref{Def:SystemGalerkin}$_2$, and using the 
Lipschitz-continuity of $\Psi'_\lambda$, we get 
\[
(\muln)_D = (\Psil'(\philn))_D\leq c_\lambda\|\philn\|_H.
\] 
Hence, we may treat the second and third terms on the right-hand side by the Young and Poincar\'e inequalities as follows:
$$\begin{aligned}
    (\muln f(\philn),\beta\sigmaln-\alpha)_{D_t} &\leq 
    \frac{m_0}{4} \|\nabla \muln\|_{L^2(0,t;H)}^2 \\
    &\quad+ 
    c_\lambda \Big( 1+ \|\philn\|_{L^2(0,t;H)}^2 + \|\sigmaln\|_{L^2(0,t;H)}^2 \Big), \\
    \chi(m_{1,n}(\philn) \nabla \sigmaln,\nabla\muln)_{D_t} &\leq
      \frac{m_0}{4} \|\nabla \muln\|_{L^2(0,t;H)}^2 + \frac{m_\infty^2\chi^2}{2m_0} \|\nabla \sigmaln\|_{L^2(0,t;H)}^2,
\end{aligned}$$
where we used the bounds for $f$ and the mobility $m_{1,n}$ as assumed in \cref{Ass:Fun} and \cref{Ass:Mob}. We insert the estimates back into \cref{Eq:TestedGalerkinPhi0} to get
\begin{equation}\label{Eq:TestedGalerkinPhi1}\begin{aligned}
	&\frac{\eps^2}{2} \|\nabla \philn(t)\|_H^2 + \|\Psil(\philn(t))\|_{L^1(D)}
	+\frac{m_0}{2}\|\nabla\muln\|_{L^2(0,t;H)}^2  \\
	&\leq c_\lambda \Big(1+\|\phi_0\|_{V}^2+
 \|\philn\|_{L^2(0,t;H)}^2 + \|\sigmaln\|_{L^2(0,t;H)}^2\Big) 
  \\ &\quad+ \frac{m_\infty^2\chi^2}{2m_0} \|\nabla \sigmaln\|_{L^2(0,t;H)}^2
  +\int_0^t\left(\muln(s), G_{1,n}(\philn(s))\,\d W_1(s)\right)_D\\
 &\quad+\frac12\int_0^t\sum_{k=0}^\infty\Big[\|\nabla G_{1,n}(\philn(s))u_k^1\|_H^2
 +\big(\Psil''(\philn(s)),|G_{1,n}(\philn(s))u_k^1|^2\big)_D\Big]\ds.
\end{aligned}\end{equation}

We take the power $\ell/2$ on both sides of this inequality, the supremum in time and then the expectation. Let us focus on the stochastic integral first: we apply the Burkholder--Davis--Gundy
inequality 
and argue as before \eqref{Eq:TestedGalerkinSigma2}
by exploiting \cref{Ass:Noi} to obtain
\begin{align*}
	\E\sups\,\left|\int_0^s\left(\muln(r), 
	G_{1,n}(\philn(r))\,\d W_1(r)\right)_D\right|^{\ell/2}
 \leq c
	\E\norm{\muln}_{L^2(0,t; H)}^{\ell/2}.
\end{align*}
We sum and subtract $(\muln)_D$ on the right-hand side,
apply the Poincar\'e-Wirtinger and Young inequalities, which yields
\begin{align*}
	&\E\sups\,\left|\int_0^s\left(\muln(r), 
	G_{1,n}(\philn(r))\,\d W_1(r)\right)_D\right|^{\ell/2}\\
	&\leq \frac{m_0^{\ell/2}}{2\cdot 2^{\ell/2}} \E\norm{\nabla\muln}_{L^2(0,t; H)}^{\ell} 
	+ c_\ell\Big( 1+\E\norm{(\muln)_D}_{L^2(0,t)}^{\ell/2} \Big).
\end{align*}
Taking the space average of the equation governing $\muln$, see \cref{Def:SystemGalerkin}$_2$, and using the assumption on the potential $\Psi$, see \cref{Ass:Psi}, we get 
\[
(\muln)_D = (\Psil'(\philn))_D\leq
\norm{\Psil'(\philn)}_{L^1(D)}\leq c\big(1 + \norm{\Psil(\philn)}_{L^1(D)}\big),
\] 
so that, putting everything together, 
\begin{align*}
	&\E\sups\,\left|\int_0^s\left(\muln(r), 
	G_{1,n}(\philn(r))\,\d W_1(r)\right)_H\right|^{\ell/2}\\
	&\leq \frac{m_0^{\ell/2}}{2\cdot 2^{\ell/2}} \E\norm{\nabla\muln}_{L^2(0,t; H)}^{\ell} 
	+ c_\ell\Big( 
 1+\E\int_0^t\norm{\Psi_\lambda(\philn(s))}_{L^1(D)}^{\ell/2}\ds
 \Big).
\end{align*}

Let us focus now on the trace terms in It\^o's formula \cref{Eq:TestedGalerkinPhi1}.
Since $G_1(\philn)$ takes its values in $\calL^2(U_1,V)$, thanks to 
\cref{Ass:Noi} we have
\begin{align*}
	\int_0^t\sum_{k=0}^\infty
 \|\nabla G_{1,n}(\philn(s))u_k^1\|_H^2\ds
 &=\int_0^t\sum_{k=0}^\infty
 \|g_{1,k}'\|_{L^\infty(\mathbb R)}^2
 \|\nabla \philn(s)\|_H^2\ds\\
 &\leq C_{G_1}\|\nabla\philn\|^2_{L^2(0,t; H)}
\end{align*}

 Lastly, we note that  it holds
 $|\Psi''_\lambda|\leq c_\lambda$ for a certain $c_\lambda>0$ and thus,  by the same computations as above, we obtain
\[
\sum_{k=0}^\infty
\big(\Psil''(\philn),|G_{1,n}(\philn)u_k^1|^2\big)_D
\leq c_\lambda\norm{G_1(\philn)}_{\calL^2(U_1,H)}^2
\leq c_\lambda |D| C_{G_1}.
\]
Putting everything together, we finally obtain the estimate \begin{equation} \label{Eq:TestedPhi} \begin{aligned}
	&\frac{\eps^\ell}{2^{\ell/2}}\E\sups\norm{\nabla\philn(s)}_H^{\ell} +
	\E\sups\norm{\Psil(\philn(s))}_{L^1(D)}^{\ell/2}
	+\frac{m_0^{\ell/2}}{2^{1+\ell/2}}\E\norm{\nabla\muln}_{L^2(0,t; H)}^{\ell}\\&\quad+\E\sups|(\muln(s))_D|^{\ell/2}\\
	&\leq c_\lambda \Bigg(1+ 
	\norm{\phi_0}_V^{\ell} 
	+\E\int_0^t\norm{\Psi_\lambda(\philn(s))}_{L^1(D)}^{\ell/2}\ds
 \Bigg)  \\ 
 &\quad+ c_\lambda \Big(\bbE\|\sigmaln\|_{L^2(0,t;H)}^\ell + \bbE\|\philn\|_{L^2(0,t;V)}^\ell
 \Big)  
 +\frac{m_\infty^\ell\chi^\ell}{(2m_0)^{\ell/2}} \bbE \|\nabla \sigmaln\|_{L^2(0,t;H)}^\ell.
\end{aligned}\end{equation}

\noindent{\bf (iii)~}  
We focus here  on the space average of $\philn$.
We choose $h_n=1$ in \cref{Def:SystemGalerkinVariational}$_1$, integrate in time, and apply It\^o's formula, see \cref{Eq:ItoFormula}, with the function $x\mapsto |x|^2$,
    which yields
    $$\begin{aligned} |(\philn(t))_D|^2&=|(\phi_0^n)_D|^2+ 
    2\int_{D_t}(\beta\sigmaln-\alpha)f(\philn)\\
    &\quad+
    \norm{(G_{1,n}(\philn))_D}^2_{L^2(0,t;\calL^2(U_1,\bbR))} \\&\quad+2\int_0^t (\philn(s))_D\, (G_{1,n}(\philn(s)))_D\dW_1(s). \end{aligned}$$ 
Now, it holds by the H\"older inequality that $|(\phi_0^n)_D| \leq c \norm{\phi_0^n}_H \leq c \norm{\phi_0}_H$ where we used the definition of the initial $\phi_0^n$ in the last step. Moreover, 
thanks to \eqref{Ass:Fun} one has that 
\[
  \int_{D_t}(\beta\sigmaln-\alpha)f(\philn)
  \leq c\left(1+\|\sigmaln\|_{L^2(0,t; H}^2\right).
\]
Furthermore, by \eqref{est_Gn} we readily have 
\[
  \norm{(G_{1,n}(\philn))_D}^2_{L^2(0,t;\calL^2(U_1,\bbR))}
  \leq c\norm{G_{1,n}(\philn)}^2_{L^2(0,T;\calL^2(U_1,H))}\leq c.
\]
This implies by the Burkholder--Davis--Gundy and inequality, see \cref{Lem:BurkholderDavisGundy},
and the Poincar\'e-Wirtinger inequality, 
that for every $\ell\geq2$
    $$\begin{aligned} &\E\sup_{s\in[0,T]}\left|
    \int_0^s (\philn(r))_D\, (G_{1,n}(\philn(r)))_D\dW_1(r)
    \right|^{\ell/2}\\
    &\qquad\leq c\E\left(\int_0^t
    \|\philn(s)\|_H^2\|G_{1,n}(\philn(s))\|^2_{\mathcal L^2(U_1,H)}\ds 
    \right)^{\ell/4}\\
    &\qquad\leq c
    \E\|\philn\|_{L^2(0,t; H)}^{\ell/2}.
    \end{aligned}
    $$
Putting this information together, taking supremum in time, power $\ell/2$ and expectations, yield that
\begin{equation}\label{est:luca_mean}
\E\sups|(\philn(s))_D|^\ell \leq
c\left(1 + \|\sigmaln\|_{L^2(0,t;H)}^{\ell}+
\E\|\philn\|_{L^2(0,t; H)}^{\ell/2}\right).
\end{equation}

\noindent{\bf (iv)~} Next, we add the three estimates to obtain a new estimate, absorb the terms and apply Gronwall's lemma to get a $n$-uniform estimate.
Indeed, we add \cref{Eq:TestedGalerkinSigma2},
\cref{Eq:TestedPhi}, and \cref{est:luca_mean} to obtain
\begin{align*}  
&\E\sups|(\philn(s))_D|^\ell+
\frac{\eps^\ell}{2^{\ell/2}}\E\sups\norm{\nabla\philn(s)}_H^{\ell} +
	\E\sups\norm{\Psil(\philn(s))}_{L^1(D)}^{\ell/2}
	\\&\quad
 +\frac{m_0^{\ell/2}}{2^{1+\ell/2}}
  \E\norm{\nabla\muln}_{L^2(0,t; H)}^{\ell}
  +\E\sups|(\muln(s))_D|^{\ell/2} \\&\quad
  + K^{\ell/2}\bbE\sup_{s \in [0,T]}\norm{\sigmaln(s)}_H^\ell 
  + \Big((Km_0)^{\ell/2}-\frac{m_\infty^\ell\chi^\ell}{(2m_0)^{\ell/2}}\Big) \bbE\|\nabla \sigmaln\|_{L^2(0,t;H)}^\ell
  \\ &\leq c_\lambda (1+ 
	\norm{\phi_0}_V^{\ell}  + \norm{\sigma_0}_H^\ell)\\
 &\quad+ c_\lambda
	\E\int_0^t
 \Big(\norm{\Psi_\lambda(\philn(s))}_{L^1(D)}^{\ell/2}
 +\|\sigmaln(s)\|_H^\ell+\|\philn(s)\|_V^\ell
 \Big)\ds.
\end{align*}
At this point, we select $K$ large enough to ensure that the prefactor of $\bbE\|\nabla \sigmaln\|_{L^2(0,t;H)}^\ell$ is positive. 
Hence, the Gronwall lemma and the Poincar\'e inequality 
yield \eqref{Est:GalerkinFinal}.
\end{proof}

Now, by \eqref{Est:GalerkinFinal} 
we infer
the $n$-uniform bounds
\begin{align}
	\label{Est:Phi1}
	\norm{\philn}_{L^\ell(\Omega; C^0([0,T]; V))}&\leq c_\lambda,\\
	\label{Est:Mu}
	\norm{\muln}_{L^{\ell/2}(\Omega; L^2(0,T; V))} + 
	\norm{\nabla\muln}_{L^\ell(\Omega; L^2(0,T; H))} &\leq c_\lambda, \\
 	\label{Est:Sigma1}
	\norm{\sigmaln}_{L^\ell(\Omega; C^0([0,T];H)\cap L^2(0,T;V))}&\leq c_\lambda.
 \end{align} 
Moreover, the Lipschitz continuity of $G_{i,n}$ and \cref{Ass:Noi} yield 
$$\begin{aligned}
    \norm{G_{1,n}(\philn)}_{L^\infty(\Omega\times(0,T); \calL^2(U_1,H))
	\cap L^\ell(\Omega; L^\infty(0,T; \calL^2(U_1,V)))} &\leq c_\lambda, \\
 \norm{G_{2,n}(\sigmaln)}_{L^\infty(\Omega\times(0,T); \calL^2(U_2,H))
	\cap L^\ell(\Omega; L^\infty(0,T; \calL^2(U_2,V)))} &\leq c_\lambda. 
\end{aligned}$$
This in turn implies, by \cite[Lemma 2.1]{flandoli1995martingale}, that 
for every $s \in (0,1/2)$
$$\begin{aligned}
    \norm{\int_0^\cdot G_{1,n}(\philn(s))\,\d W_1(s)}_{L^\ell(\Omega; W^{s,\ell}(0,T; V))} &\leq c_{\lambda,s}, \\
 \norm{\int_0^\cdot G_{2,n}(\sigmaln(s))\,\d W_2(s)}_{L^\ell(\Omega; W^{s,\ell}(0,T; V))} &\leq c_{\lambda,s}.
\end{aligned}$$
Let us fix now $\bar s\in(1/\ell,1/2)$: 
by comparison with the model equations, we directly obtain then
\begin{align}
 \label{Est:Phi2}
\norm{\philn}_{L^\ell(\Omega; L^2(0,T; V_2)\cap W^{\bar s,\ell}(0,T; V^*))} &\leq c_\lambda, \\
 \label{Est:Sigma2}
\norm{\sigmaln}_{L^\ell(\Omega; W^{\bar s,\ell}(0,T; V^*))} &\leq c_\lambda.
\end{align}

\subsection{Step 3: passage to the limit as $n\to\infty$, with $\lambda$ fixed}
\label{Sec:Step3:LimN}
We perform here the passage to the limit as $n\to\infty$, 
keeping $\lambda>0$ fixed.

\begin{lemma}
    The sequence of laws of 
    $(\philn, G_{1,n}(\philn)\cdot W_1,
    W_1)_{n\in\bbN}$
is tight on the product space
\[
\left(C^0([0,T]; H)\cap L^2(0,T; V)\right) \times 
C^0([0,T]; H) \times 
C^0([0,T]; \tilde U_1),
\]
and the sequence of laws of 
 $(\sigmaln, G_{2,n}(\sigmaln)\cdot W_2, W_2)_{n\in\bbN}$
is tight on the product space
\[
\left(C^0([0,T]; V^*)\cap L^2(0,T; H)\right) \times
C^0([0,T]; H) \times 
C^0([0,T]; \tilde U_2).
\]
\end{lemma}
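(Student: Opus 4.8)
The plan is to read this as a soft-compactness statement: each component of the two tuples lives in a separable, completely metrizable space, and the uniform-in-$n$ estimates of Step~2 bound it --- in probability --- in a smaller space that embeds \emph{compactly} into the target. By Prokhorov's theorem (recalled in \cref{Subsec:Stochastic}), tightness will then follow once Markov's inequality is applied to those moment bounds. The compact embeddings will be supplied by the Aubin--Lions--Simon lemma and its Arzel\`a--Ascoli analogue for spaces of continuous functions, together with the Sobolev embedding $W^{s,\ell}(0,T;X)\hookrightarrow C^{0,s-1/\ell}([0,T];X)$ valid whenever $s\ell>1$, and the interpolation bound $\|v\|_H^2\le\|v\|_V\|v\|_{V^*}$ attached to the Gelfand triple $V_2\subset V\subset H\subset V^*$.

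For the first tuple, fix $\ell\ge2$ and $\bar s\in(1/\ell,1/2)$ as in Step~2. By \eqref{Est:Phi1} and \eqref{Est:Phi2}, $(\philn)_n$ is bounded in $L^\ell(\Omega;Y_\phi)$ with $Y_\phi:=C^0([0,T];V)\cap L^2(0,T;V_2)\cap W^{\bar s,\ell}(0,T;V^*)$. I would then show that any closed ball $B_R\subset Y_\phi$ is relatively compact in $C^0([0,T];H)\cap L^2(0,T;V)$: compactness in $L^2(0,T;V)$ follows from Aubin--Lions--Simon applied to $V_2\hookrightarrow\hookrightarrow V\hookrightarrow V^*$, while compactness in $C^0([0,T];H)$ follows from Arzel\`a--Ascoli, the $W^{\bar s,\ell}(0,T;V^*)$-bound giving equicontinuity in $V^*$, the $C^0([0,T];V)$-bound giving a uniform $V$-bound at each time, and the interpolation inequality upgrading equicontinuity to $H$ (pointwise relative compactness in $H$ being free from $V\hookrightarrow\hookrightarrow H$). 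Markov's inequality then gives $\bbP(\philn\notin B_R)\le c_\lambda R^{-\ell}$ uniformly in $n$, and $R\to\infty$ yields tightness of $(\philn)_n$ on $C^0([0,T];H)\cap L^2(0,T;V)$. For $(\sigmaln)_n$ the argument is the same, now based on \eqref{Est:Sigma1} and \eqref{Est:Sigma2}: the sequence is bounded in $L^\ell$ of $C^0([0,T];H)\cap L^2(0,T;V)\cap W^{\bar s,\ell}(0,T;V^*)$; Aubin--Lions--Simon with $V\hookrightarrow\hookrightarrow H\hookrightarrow V^*$ gives compactness in $L^2(0,T;H)$, and the $C^0([0,T];H)$-bound together with the $W^{\bar s,\ell}(0,T;V^*)$-equicontinuity gives, via $H\hookrightarrow\hookrightarrow V^*$ and Arzel\`a--Ascoli, compactness in $C^0([0,T];V^*)$.

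For the stochastic convolutions, the bound derived in Step~2 via \cite[Lemma 2.1]{flandoli1995martingale} gives $\int_0^\cdot G_{1,n}(\philn)\,\d W_1$ and $\int_0^\cdot G_{2,n}(\sigmaln)\,\d W_2$ bounded in $L^\ell(\Omega;W^{s,\ell}(0,T;V))$ for every $s\in(0,1/2)$; choosing $s\in(1/\ell,1/2)$ and combining $W^{s,\ell}(0,T;V)\hookrightarrow C^{0,s-1/\ell}([0,T];V)$ with $V\hookrightarrow\hookrightarrow H$ and Arzel\`a--Ascoli makes closed balls of $W^{s,\ell}(0,T;V)$ relatively compact in $C^0([0,T];H)$, so Markov's inequality again yields tightness. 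The laws of $W_1$ and $W_2$ are each a single Borel probability measure on the Polish space $C^0([0,T];\tilde U_i)$, hence trivially tight. Finally, since tightness of each marginal is now established, the joint laws are tight on the respective product spaces: given $\varepsilon>0$, pick compacts in the three factors carrying mass $>1-\varepsilon/3$ uniformly in $n$ and take their Cartesian product.

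The only genuinely delicate part is the two mixed compactness statements involving the fractional-in-time space $W^{\bar s,\ell}(0,T;V^*)$ --- namely that, jointly with an $L^\infty$-in-time bound in the stronger spatial space, they deliver compactness \emph{simultaneously} in the $L^2$-in-time topology and in the $C^0$-in-time topology of the target. This is exactly where the choice $\bar s>1/\ell$ made in Step~2 is used, to land in a H\"older-in-time space; the remainder is the routine Markov--Prokhorov bookkeeping.
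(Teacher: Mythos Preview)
Your proposal is correct and follows essentially the same route as the paper's proof: uniform moment bounds from Step~2, compact embeddings into the target spaces, and Markov's inequality plus Prokhorov. The only cosmetic difference is that the paper obtains all four compact embeddings (both the $L^2$-in-time and the $C^0$-in-time ones) by a single citation of Simon's version of the Aubin--Lions lemma \cite[Cor.~5]{simon1986compact}, whereas you argue the $C^0$-in-time compactness by hand via Arzel\`a--Ascoli and the interpolation inequality $\|v\|_H^2\le\|v\|_V\|v\|_{V^*}$ --- which is precisely the content of that part of Simon's result.
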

\begin{proof}
Since it holds $\bar s>1/\ell$,
the Aubin--Lions compactness lemma, see \cite[Cor.~5, p.~86]{simon1986compact},
implies the compact inclusions
\begin{align*}
L^\infty(0,T; V)\cap W^{\bar s,\ell}(0,T; V^*)&\hookrightarrow\hookrightarrow C^0([0,T]; H),\\
L^2(0,T; V_2)\cap W^{\bar s,\ell}(0,T; V^*)&\hookrightarrow\hookrightarrow L^2(0,T; V).
\end{align*}
Hence, for every $R>0$ the closed ball $B_R$ 
in $L^2(0,T; V_2)\cap 
L^\infty(0,T; V)\cap W^{\bar s,\ell}(0,T; V^*)$ of radius $R$
is compact in $C^0([0,T]; H)\cap L^2(0,T; V)$. Moreover, 
thanks to the Markov inequality and the estimates \eqref{Est:Phi1} and \eqref{Est:Phi2} 
we have
\begin{align*}
	\P\{\philn\in B_R^c\}&=
	\P\{\norm{\philn}_{L^2(0,T; V_2)\cap L^\infty(0,T; V)\cap W^{\bar s,\ell}(0,T; V^*)}> R\}\\
	&\leq
	\frac1{R^{\ell}}\E\norm{\philn}_{L^2(0,T; V_2)\cap L^\infty(0,T; V)\cap W^{\bar s,\ell}(0,T; V^*)}^\ell\leq
	\frac{c_\lambda^\ell}{R^\ell},
\end{align*}
which yields $\lim_{R\to\infty}\sup_{n\in\bbN}\P\{\philn\in B_R^c\}=0$. Hence, the family of laws of $(\philn)_n$ on $C^0([0,T]; H)\cap L^2(0,T; V)$ is tight.
Using a similar argument, 
since $W^{\bar s, \ell}(0,T; V)$ is compactly embedded in
$C^0([0,T]; H)$, one can also show that the family of laws of $
G_{1,n}(\philn)\cdot W_1:=\int_0^\cdot G_{1,n}(\philn(s))\,\d W_1(s)$
is tight on $C^0([0,T]; H)$.
Eventually, we identify $W_1$ with
a constant sequence of random variables with values in
$C^0([0,T]; \tilde U_1)$, which is tight.  This proves the first assertion of the lemma. The second assertion  follows analogously 
by the compact inclusions
\begin{align*}
L^\infty(0,T; H)\cap W^{\bar s,\ell}(0,T; V^*)&\hookrightarrow\hookrightarrow C^0([0,T]; V^*),\\
L^2(0,T; V)\cap W^{\bar s,\ell}(0,T; V^*)&\hookrightarrow\hookrightarrow L^2(0,T; H),
\end{align*}
and this concludes the proof.
\end{proof}

Finally, we are in the position to prove \cref{Lem:ExistenceLambda}, i.e., we prove that the Yosida approximated system admits a martingale solution.
\begin{proof}[Proof of \cref{Lem:ExistenceLambda}]
By the Prokhorov and Skorokhod theorems
and their weaker version on sub-Polish spaces, see the discussion at the end of  \Cref{Subsec:Stochastic},
recalling the estimates \eqref{Est:Phi1}--\eqref{Est:Phi2}, we conclude that
there exists a probability space $(\tilde\Omega,\tilde\calF, \tilde\P)$ and measurable 
maps $\Lambda_n:(\tilde\Omega,\tilde\calF)\to(\Omega,\calF)$ such that 
$\tilde\P\circ \Lambda_n^{-1}=\P$ for every $n\in\bbN$ and as $n \to \infty$ it holds
\begin{align*}
	\tilde\philn:=\philn\circ\Lambda_n &\to \tilde\phil
	&&\text{in } L^p(\tilde\Omega;C^0([0,T]; H)\cap L^2(0,T; V)) \quad\,\forall\,p<\ell,\\
	\tilde\philn &\stackrel{*}{\rightharpoonup} \tilde\phil 
	&&\text{in } L^\ell_w(\tilde\Omega; L^\infty(0,T; V)),\\
   \tilde\philn &\rightharpoonup \tilde\phil 
	&&\text{in } L^\ell(\tilde\Omega; L^2(0,T; V_2))
 \cap L^\ell(\tilde\Omega; W^{\bar s,\ell}(0,T; V^*)),\\
	\tilde\muln:=\muln\circ\Lambda_n &\rightharpoonup\tilde\mul
	&&\text{in } L^{\ell/2}(\tilde\Omega; L^2(0,T; V)),\\
	\nabla\tilde\muln &\rightharpoonup\nabla\tilde\mul
	&&\text{in } L^{\ell}(\tilde\Omega; L^2(0,T; H)),\\
 \tsigmaln:=\sigmaln\circ\Lambda_n &\to \tsigmal
	&&\text{in } L^p(\tilde\Omega;C^0([0,T]; V^*)\cap L^2(0,T; H)) \quad\,\forall\,p<\ell,\\
 \tilde\sigmaln &\stackrel{*}{\rightharpoonup} \tilde\sigmal 
	&&\text{in } L^\ell_w(\tilde\Omega; L^\infty(0,T; H)),\\
	\tsigmaln &\rightharpoonup \tsigmal 
	&&\text{in } L^\ell(\tilde\Omega; L^2(0,T; V))
 \cap L^\ell(\tilde\Omega; W^{\bar s,\ell}(0,T; V^*)),
\end{align*}
for some measurable processes
\begin{align*}
    	\tilde \phil &\in L^\ell(\tilde\Omega; 
 W^{\bar s,\ell}(0,T; V^*) \cap 
 C^0([0,T]; H)\cap L^2(0,T; V_2))\cap 
	L^\ell_w(\tilde\Omega; L^\infty(0,T; V)),\\
	\tilde\mul &\in L^{\ell/2}(\tilde\Omega; L^2(0,T; V)), 
	\quad\nabla\tilde\mul\in L^\ell(\tilde\Omega; L^2(0,T; H)),\\
 \tsigmal &\in L^\ell(\tilde\Omega; 
 W^{\bar s,\ell}(0,T; V^*)\cap 
 C^0([0,T]; V^*)\cap  L^2(0,T; V))
 \cap L^\ell_w(\tilde\Omega; L^\infty(0,T; H)),
\end{align*}
as well as
\begin{align*}
	\tilde I_{\lambda,n}^1:=(G_{1,n}(\philn)\cdot W_1)\circ\Lambda_n &\to \tilde I_{\lambda}^1
	&&\text{in } L^p(\tilde\Omega; C^0([0,T]; H)) \quad\forall\,p<\ell,\\
 \tilde I_{\lambda,n}^2:=(G_{2,n}(\sigmaln)\cdot W_2)\circ\Lambda_n &\to \tilde I_{\lambda}^2
	&&\text{in } L^p(\tilde\Omega; C^0([0,T]; H)) \quad\forall\,p<\ell,\\
	\tilde W_{1,n}:=W_1\circ\Lambda_n &\to \tilde W_1 
	&&\text{in } L^p(\tilde\Omega; C^0([0,T]; \tilde U_1)) \quad\forall\,p<\ell, \\
 \tilde W_{2,n}:=W_2\circ\Lambda_n &\to \tilde W_2 
	&&\text{in } L^p(\tilde\Omega; C^0([0,T]; \tilde U_2)) \quad\forall\,p<\ell,
\end{align*}
for some measurable processes 
\begin{align*}
	\tilde I^1_\lambda, \tilde I^2_\lambda &\in L^\ell(\tilde\Omega; C^0([0,T]; H)),\\
	 \tilde W_1 &\in L^\ell(\tilde \Omega; C^0([0,T]; \tilde U_1)),\\
  \tilde W_2 &\in L^\ell(\tilde \Omega; C^0([0,T]; \tilde U_2)).
\end{align*}
Note that possibly enlarging the new probability space, it is not restrictive to 
suppose that $(\tilde\Omega,\tilde\calF,\tilde\P)$ is independent of $\lambda$.
Now, since $\Psil'$ and $G_i:H \to \calL^2(U_i,H)$ are Lipschitz-continuous, we readily have
\[\begin{aligned}
\Psil'(\tilde\philn)&\to \Psil'(\tilde\phil) &&\text{in } L^\ell(\tilde\Omega; L^2(0,T; H)), \\
G_{1,n}(\tilde\philn) &\to G_1(\tilde\phil) 
&&\text{in } L^\ell(\tilde\Omega; L^2(0,T; \calL^2(U_1,H)))
\quad\forall\,p<\ell, \\
G_{2,n}(\tilde\sigmaln) &\to G_2(\tilde\sigmal) 
&&\text{in } L^\ell(\tilde\Omega; L^2(0,T; \calL^2(U_2,H)))
\quad\forall\,p<\ell,
\end{aligned}
\]
Moreover, since $\phi_0\in V$ and $\sigma_0 \in H$, it implies
$\phi_0^n\to\phi_0$ in $V$ and $\sigma_0^n \to \sigma_0$ in $H$. Similarly to \cite{scarpa2021degenerate} (i.e., following the arguments in \cite{flandoli1995martingale}, \cite[Section 8.4]{da2014stochastic}, \cite[Section~4.5]{sapountzoglou2019doubly}, \cite[Section~4.3]{vallet2019well}), a standard procedure allows identifying the 
limit terms $\tilde I_\lambda^1$ and $\tilde I_\lambda^2$ as
the $H$-valued martingales given by 
\begin{equation}
\label{id_stoc}
\tilde I_\lambda^1=\int_0^\cdot G_1(\tilde\phil(s))\,\d\tilde W_1(s), \quad \tilde I_\lambda^2=\int_0^\cdot G_2(\tilde\sigmal(s))\,\d\tilde W_2(s).
\end{equation}

We test now each equation in \eqref{Def:SystemGalerkin} by an 
arbitrary element $v\in V$ and apply $\Lambda_n$.
By exploiting the convergences above and the fact that, for $i=1,2$,
$m_{i,n}(\tilde\philn)\to m_i(\tilde\phil)$
almost everywhere thanks to the estimate
$|m_{i,n}|\leq m^*$
and the dominated convergence theorem,
we can let $n\to\infty$ and conclude the proof of 
\cref{Lem:ExistenceLambda}.
\end{proof}

\subsection{Step 4: uniform estimates in $\lambda$} \label{Sec:Step4:UniformLambda}
We prove here uniform estimates independently of $\lambda$. To this end, 
we start with  the following lemma.

\begin{lemma}
There holds the estimate:
    \begin{equation} \label{Est:GalerkinFinal}\begin{aligned}  &\tilde\E\sups\norm{\tilde\phil(s)}_V^{\ell} +
	\tilde\E\sups\norm{\Psil(\tilde\phil(s))}_{L^1(D)}^{\ell/2}
	+\tilde\E\norm{\nabla\mul}_{L^2(0,t; H)}^{\ell} \\&\quad 
   +\tilde\E\sups|(\tilde\mul(s))_D|^{\ell/2} + 
   \tilde\E\sup_{s \in [0,T]}\norm{\tilde\sigmal(s)}_H^\ell 
   + \tilde\E\|\nabla \tilde\sigmal\|_{L^2(0,t;H)}^\ell
  \\ &\leq c \Big(1+ 
	\norm{\phi_0}_V^{\ell} + \|\Psi(\phi_0)\|_{L^1(D)}^{\ell/2}+
 \norm{\sigma_0}_H^\ell\Big).
\end{aligned}
\end{equation}
\end{lemma}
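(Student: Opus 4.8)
The plan is to re-derive the energy estimates of \cref{Sec:Step2:UniformN}, this time with constants \emph{independent of} $\lambda$. The key change of viewpoint is to work directly on the $\lambda$-solution $(\tphil,\tmul,\tsigmal)$ constructed in \cref{Sec:Step3:LimN} rather than on its Galerkin approximation: on the limit the noise enters the chemical potential through $G_1(\tphil)u_k^1=g_{1,k}(\tphil)$, which is genuinely bounded in $L^\infty(D)$ by $\norm{g_{1,k}}_{W^{1,\infty}(\R)}$, whereas the projected coefficients $G_{1,n}(\cdot)u_k^1$ are only controlled in $V$, which in dimension $d=3$ is not enough for the $\lambda$-uniform estimate of the It\^o correction term below (re-running the Galerkin estimate with uniform constants and then passing to the limit would also work, but is more delicate for exactly this reason). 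The It\^o formula for the Ginzburg--Landau functional $\calE_\lambda(v)=\tfrac{\eps^2}{2}\norm{\nabla v}_H^2+\int_D\Psil(v)\dx$ applied to $\tphil$ — legitimate since $\tphil\in L^2(0,T;V_2)$ $\tilde\P$-a.s., $\tmul=-\eps^2\Delta\tphil+\Psil'(\tphil)=D\calE_\lambda(\tphil)\in L^2(0,T;V)$, by the arguments of \cref{Sec:Step2:UniformN} together with a Faedo--Galerkin regularization of $\tphil$ — gives the exact analogue of \eqref{Eq:TestedGalerkinPhi0}.

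The first ingredient is a set of $\lambda$-uniform properties of the Yosida potential, valid (without loss of generality) for $\lambda\in(0,1]$: from \cref{Ass:Psi} and the standard theory of Moreau--Yosida regularization one has $\Psil\le\Psi$ and $\Psil\ge -c$ pointwise, $\Psil''\ge -C_\Psi$, and the growth bound $|\Psil'(r)|^q+|\Psil''(r)|\le C_\Psi(1+|\Psil(r)|)$ for all $r\in\R$, with constants independent of $\lambda$. Consequently $\Psil(\phi_0^n)\le\Psi(\phi_0^n)$, so that, by the properties of the Galerkin projection of the initial datum, $\calE_\lambda(\phi_0)\le c(1+\norm{\phi_0}_V^2+\norm{\Psi(\phi_0)}_{L^1(D)})$ uniformly in $\lambda$; moreover $(\tmul)_D=(\Psil'(\tphil))_D$ satisfies $|(\tmul)_D|\le\norm{\Psil'(\tphil)}_{L^1(D)}\le c(1+\norm{\Psil(\tphil)}_{L^1(D)})$ by H\"older's inequality and the growth bound — precisely as already exploited, with a $\lambda$-independent constant, inside \cref{Sec:Step2:UniformN}.

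Then I would repeat the three It\^o computations of \cref{Sec:Step2:UniformN} on $(\tphil,\tmul,\tsigmal)$, replacing every $\lambda$-dependent bound on $\Psil$ by the uniform ones above. For $K\norm{\tsigmal}_H^2$ and for $|(\tphil)_D|^2$ nothing new is needed: those constants were already $\lambda$-independent. In the energy identity the new points are: (a) the It\^o correction $\tfrac12\sum_k\big[\norm{\nabla g_{1,k}(\tphil)}_H^2+(\Psil''(\tphil),|g_{1,k}(\tphil)|^2)_D\big]$ is now bounded, using $|\Psil''|\le C_\Psi(1+|\Psil|)$, $\norm{g_{1,k}(\tphil)}_{L^\infty(D)}\le\norm{g_{1,k}}_{L^\infty(\R)}$ and \cref{Ass:Noi}, by $c\norm{\nabla\tphil}_H^2+c(1+\norm{\Psil(\tphil)}_{L^1(D)})$, which is Gronwall-admissible; (b) the chemotaxis term $\chi(m_1(\tphil)\nabla\tsigmal,\nabla\tmul)_{D_t}$ is split by Young's inequality into a small multiple of $\norm{\nabla\tmul}_{L^2(0,t;H)}^2$ plus a multiple of $\norm{\nabla\tsigmal}_{L^2(0,t;H)}^2$, the latter absorbed by choosing $K$ large in the $\tsigmal$-estimate as in \cref{Sec:Step2:UniformN}; (c) the proliferation term $(\tmul f(\tphil),\beta\tsigmal-\alpha)_{D_t}$: write $\tmul=(\tmul-(\tmul)_D)+(\tmul)_D$, absorb the zero-mean part into $\tfrac{m_0}{8}\norm{\nabla\tmul}_{L^2(0,t;H)}^2+c(1+\norm{\tsigmal}_{L^2(0,t;H)}^2)$ via Poincar\'e--Wirtinger, and control the mean part by $\int_0^t|(\tmul(s))_D|\,\norm{f}_{L^\infty}\norm{\beta\tsigmal(s)-\alpha}_{L^1(D)}\ds$, using the bound on $(\tmul)_D$ and the structure dictated by \cref{Ass:Psi} to keep $\norm{\Psil(\tphil)}_{L^1(D)}$ at the first power under the time integral.

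Finally, as in \cref{Sec:Step2:UniformN}, I would add the three inequalities, take the power $\ell/2$, the supremum over $s\in[0,t]$ and the expectation $\tilde\E$, estimate the stochastic integrals by the Burkholder--Davis--Gundy inequality \cref{Lem:BurkholderDavisGundy} (after summing and subtracting $(\tmul)_D$), fix $K$ large so that the coefficient of $\tilde\E\norm{\nabla\tsigmal}_{L^2(0,t;H)}^\ell$ is positive, absorb, and close by Gronwall's lemma — the a priori (but $\lambda$-dependent) finiteness of all the terms, which follows by lower semicontinuity from the estimates of \cref{Sec:Step2:UniformN}, ensures Gronwall applies and upgrades the bound to the $\lambda$-uniform estimate \eqref{Est:GalerkinFinal}. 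The main obstacle is step (c): unlike in \cref{Sec:Step2:UniformN} one cannot use the $\lambda$-dependent Lipschitz constant of $\Psil'$, so the proliferation/coupling term must be closed using only the intrinsic growth exponent $q$ of \cref{Ass:Psi} and the regularity $\tsigmal\in L^\infty(0,T;H)\cap L^2(0,T;V)$. This is exactly what the dichotomy in \cref{Ass:Psi} reflects: in the strongly coupled case $\chi>0$ with non-constant $m_1$ one is forced to take $q=2$, so that $\Psil'(\tphil)\in L^2(D)$ pairs exactly with $\tsigmal\in L^2(D)$; whereas when $\chi=0$ or $m_1$ is constant, the decoupling of the nutrient flux, respectively the extra regularity gained by also testing the $\tphil$-equation with $\tphil$ itself, leaves enough room to treat an arbitrary polynomial double-well potential.
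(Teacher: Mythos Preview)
Your proposal is correct and follows the same overall strategy as the paper: It\^o energy identity for $\calE_\lambda$, splitting of the proliferation term via the spatial mean of $\tmul$ and Poincar\'e--Wirtinger, absorption of the chemotaxis cross-terms by choosing $K$ large in the $\sigma$-estimate, Burkholder--Davis--Gundy, and the three-case analysis on $(\chi,m_1)$ dictated by \cref{Ass:Psi} before closing by Gronwall. The one genuine difference is the order of operations: the paper keeps working on the Galerkin approximant $(\philn,\muln,\sigmaln)$, derives the pathwise inequality there, and only then applies $\Lambda_n$ and lets $n\to\infty$ by weak lower semicontinuity on the left and strong convergence on the right, whereas you apply It\^o's formula directly to $(\tphil,\tmul,\tsigmal)$. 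Your choice is cleaner for exactly the reason you identify: the trace term $\sum_k(\Psil''(\tphil),|G_1(\tphil)u_k^1|^2)_D$ is immediately tractable because $G_1(\tphil)u_k^1=g_{1,k}(\tphil)\in L^\infty(D)$ with $\|g_{1,k}(\tphil)\|_{L^\infty}\le\|g_{1,k}\|_{L^\infty(\R)}$, while the paper's Galerkin-level claim that $\sum_k(\Psil''(\philn),|G_{1,n}(\philn)u_k^1|^2)_D\le c(1+\|\Psil(\philn)\|_{L^1(D)})$ is delicate, since the projected coefficient $G_{1,n}(\philn)u_k^1\in H_n$ is uniformly bounded in $H$ or $V$ but not in $L^\infty$; strictly speaking that bound only becomes available after the limit $n\to\infty$. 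The price of your route is having to justify the infinite-dimensional It\^o formula for $\calE_\lambda$ on $\tphil$, which you correctly flag and which the paper sidesteps by staying in the finite-dimensional space $H_n$.
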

\begin{proof}
Let us go back to \cref{Eq:TestedGalerkinPhi0}.
Since $(\muln)_D\leq\|\Psi_\lambda'(\philn)\|_{L^1(D)}$,
on the right-hand side one has, by the Poincar\'e and H\"older inequalities,
the inclusion $V\hookrightarrow L^6(D)$, and \cref{Ass:Psi},
\begin{align*}
    &(\muln f(\philn),\beta\sigmaln-\alpha)_{D_t} \\
    &\leq \frac{m_0}{4} \|\nabla\muln\|^2_{L^2(0,t; H)}
    +c(1+\|\sigmaln\|^2_{L^2(0,t; H)})\\
    &\qquad+
    c\int_0^t
    \|\Psi_\lambda'(\philn(s))\|_{L^{1}(D)}
    (1+\|\sigmaln(s)\|_{L^1(D)})\ds,
\end{align*}
as well as
\begin{align*}
    \chi(m_{1,n}(\philn) \nabla \sigmaln,\nabla\muln)_{D_t} &\leq
      \frac{m_0}{4} \|\nabla \muln\|_{L^2(0,t;H)}^2 + \frac{m_\infty^2\chi^2}{2m_0} \|\nabla \sigmaln\|_{L^2(0,t;H)}^2.
\end{align*}
Analogously, we have thanks to
\cref{Ass:Noi} and \cref{Ass:Psi} that
\begin{align*}
	\int_0^t\sum_{k=0}^\infty
 \|\nabla G_{1,n}(\philn(s))u_k^1\|_H^2\ds
 &=\int_0^t\sum_{k=0}^\infty
 \|g_{1,k}'\|_{L^\infty(\mathbb R)}^2
 \|\nabla \philn(s)\|_H^2\ds\\
 &\leq C_{G_1}\|\nabla\philn\|^2_{L^2(0,t; H)}
\end{align*}
and
\[
\int_0^t\sum_{k=0}^\infty
\big(\Psil''(\philn(s)),|G_{1,n}(\philn(s))u_k^1|^2\big)_D\ds
\leq c\int_0^t(1+\|\Psi_\lambda(\philn(s))\|_{L^1(D)})\ds.
\]
Taking this information into account in \cref{Eq:TestedGalerkinSigma2}
we deduce that 
\begin{align*}
    &\frac{\eps^2}2\sups\|\nabla\philn(s)\|_H^2
    +\sups\|\Psi_\lambda(\philn(t))\|_{L^1(D)}
    +\frac{m_0}2\|\nabla\muln\|^2_{L^2(0,t; H)}\\
    &\leq \frac{\eps^2}2\|\nabla\phi_0\|_H^2
    +\|\Psi_\lambda(\phi_0^n)\|_{L^1(D)}
    + \frac{m_\infty^2\chi^2}{2m_0}
    \|\nabla\sigmaln\|^2_{L^2(0,t; H)}
    +c\|\sigmaln\|^2_{L^2(0,t;H)}\\
    &\quad+
    c\|\nabla\philn\|^2_{L^2(0,t; H)}
    +c\int_0^t\|\Psi_\lambda'(\philn(s))\|_{L^{1}(D)}
    (1+\|\sigmaln(s)\|_H)\ds\\
    &\quad
    +c\int_0^t(1+\|\Psi_\lambda(\philn(s))\|_{L^1(D)})\ds
    +\sups\int_0^s\left(\muln(r), G_{1,n}(\philn(r))\dW_1(r)\right)_D.
\end{align*}
Furthermore, from \cref{Eq:TestedGalerkinSigma1}
one also has
\begin{equation}\label{est:luca2}
\begin{aligned}
  &K\sups\norm{\sigmaln(s)}_H^2+ 
  Km_0\|\nabla \sigmaln\|_{L^2(0,t;H)}^2  
  \\ &\leq K\norm{\sigma_0}_H^2+ KC_{G_2}|D|T+
\frac{Km_\infty^2\chi^2}{m_0}  \|\nabla \philn\|_{L^2(0,t;H)}^2 
  \\&\quad +2K\sups
  \int_0^s\left(\sigmaln(r),G_{2,n}(\sigmaln(r))\dW_2(r)\right)_D.
\end{aligned}
\end{equation}
Summing up the two inequalities and rearranging the terms
we infer then 
\begin{align*}
    &\frac{\eps^2}2\sups\|\nabla\philn(s)\|_H^2
    +\sups\|\Psi_\lambda(\philn(t))\|_{L^1(D)}
    +\frac{m_0}2\|\nabla\muln\|^2_{L^2(0,t; H)}\\
    &\qquad+K\sups\norm{\sigmaln(s)}_H^2+ 
  \left(Km_0-\frac{m_\infty^2\chi^2}{2m_0}\right)
  \|\nabla \sigmaln\|_{L^2(0,t;H)}^2 
    \\
    &\leq c+ \frac{\eps^2}2\|\nabla\phi_0\|_H^2
    +\|\Psi_\lambda(\phi_0^n)\|_{L^1(D)}+K\norm{\sigma_0}_H^2
    +c\|\sigmaln\|^2_{L^2(0,t;H)}\\
    &\qquad+
    c\|\nabla\philn\|^2_{L^2(0,t; H)}
    +c\int_0^t\|\Psi_\lambda'(\philn(s))\|_{L^{1}(D)}
    (1+\|\sigmaln(s)\|_H)\ds\\
    &\qquad+c\int_0^t\|\Psi_\lambda(\philn(s))\|_{L^1(D)}\ds
    +\sups\int_0^s\left(\muln(r), G_{1,n}(\philn(r))\dW_1(r)\right)_D\\
    &\qquad+2K\sups
  \int_0^s\left(\sigmaln(r),G_{2,n}(\sigmaln(r))\dW_2(r)\right)_D,
\end{align*}
where $K>0$ is fixed so that the prefactor on the left-hand side is positive.
Now, by applying $\Lambda_n$ and letting $n\to\infty$, 
by the convergences as $n\to\infty$ proved above, the strong-weak convergence, 
and the weak lower semicontinuity of the norms we deduce,
by possibly renominating the constants, 
\begin{align*}
    &\sups\|\nabla\tilde\phil(s)\|_H^2
    +\sups\|\Psi_\lambda(\tilde\phil(t))\|_{L^1(D)}
    +\sups|(\tilde\mul(s))|
    +\|\nabla\tilde\mul\|^2_{L^2(0,t; H)}\\
    &\quad+\sups\norm{\tilde\sigmal(s)}_H^2+ 
  \|\nabla\tilde\sigmal\|_{L^2(0,t;H)}^2 
    \\
    &\leq c\left(1+\|\nabla\phi_0\|_H^2
    +\|\Psi(\phi_0)\|_{L^1(D)}+\norm{\sigma_0}_H^2\right)
    +c\|\tilde\sigmal\|^2_{L^2(0,t;H)}
    +c\|\nabla\tilde\phil\|^2_{L^2(0,t; H)}\\
    &\quad
    +c\int_0^t\left(1+\|\Psi_\lambda(\tilde\phil(s))\|_{L^1(D)}+
    \|\Psi_\lambda'(\tilde\phil(s))\|_{L^{1}(D)}
    \|\tilde\sigmal(s)\|_H\right)\ds\\
    &\quad+c\sups\left[\int_0^s
    \left(\tilde\mul(r), G_1(\tilde\phil(r))\d\tilde W_1(r)\right)_D
    +
  \int_0^s\left(\tilde\sigmal(r),G_{2}(\tilde\sigmal(r))\d\tilde W_2(r)\right)_D\right].
\end{align*}
The Burkholder--Davis--Gundy and Poincar\'e inequalities  
imply that 
\begin{align*}
	&\tilde\E\sups\,\left|\int_0^s\left(\tilde\mul(r), 
	G_{1}(\tilde\phil(r))\,\d \tilde W_1(r)\right)_H\right|^{\ell/2}\\
	&\leq \frac{m_0^{\ell/2}}{2\cdot 2^{\ell/2}}
 \tilde\E\norm{\nabla\tilde\mul}_{L^2(0,t; H)}^{\ell} 
	+ c_\ell\Big( 
 1+\tilde\E\int_0^t\norm{\Psi_\lambda(\tilde\phil(s))}_{L^1(D)}^{\ell/2}\ds
 \Big)
\end{align*}
and
\begin{align*}
&\tilde\E\sup_{s\in[0,T]}\left|\int_0^s
    \left(\tilde\sigmal(r),G_{2}(\tilde\sigmal(r))\dW_2(r)\right)_D
    \right|^{\ell/2}\\
    &\qquad\leq c\tilde\E\left(\int_0^t
    \|\tilde\sigmal(s)\|_H^2\|G_{2}(\tilde\sigmal(s))\|^2_{\mathcal L^2(U_1,H)}\ds 
    \right)^{\ell/4}
    \leq c
    \tilde\E\|\tilde\sigmal\|_{L^2(0,t; H)}^{\ell/2}.
\end{align*}
Consequently, taking power $\ell/2$ and expectations we obtain, 
recalling also \cref{est:luca_mean},
\begin{align*}
    &\tilde\E\sups\|\tilde\phil(s)\|_V^\ell
    +\tilde\E\sups\|\Psi_\lambda(\tilde\phil(t))\|_{L^1(D)}^{\ell/2}
    +\tilde\E\sups|(\tilde\mul(s))|^{\ell/2}
    +\tilde\E\|\nabla\tilde\mul\|^\ell_{L^2(0,t; H)}\\
    &\quad+\tilde\E\sups\norm{\tilde\sigmal(s)}_H^\ell+ 
  \tilde\E\|\nabla\tilde\sigmal\|_{L^2(0,t;H)}^\ell 
    \\
    &\leq c\left(1+\|\nabla\phi_0\|_H^2
    +\|\Psi(\phi_0)\|_{L^1(D)}+\norm{\sigma_0}_H^2\right)
    +c\tilde\E\|\tilde\sigmal\|^\ell_{L^2(0,t;H)}
    +c\tilde\E\|\nabla\tilde\phil\|^\ell_{L^2(0,t; H)}\\
    &\quad
    +c\tilde\E\int_0^t\left(1+
    \|\Psi_\lambda(\tilde\phil(s))\|_{L^1(D)}^{\ell/2}+
    \|\Psi_\lambda'(\tilde\phil(s))\|_{L^{1}(D)}^{\ell/2}
    \|\tilde\sigmal(s)\|_H^{\ell/2}\right)\ds.
\end{align*}
We show how to control the last term on the right-hand side in the three cases
$\chi=0$, $\chi>0$, and $m_1=\bar m_1$ constant.
First, if $\chi>0$, by assumption \cref{Ass:Psi} one has $q=2$: hence, 
one can use the Young inequality on the last term on the right-hand side as
\[
\tilde\E\int_0^t
    \|\Psi_\lambda'(\tilde\phil(s))\|_{L^{1}(D)}^{\ell/2}
    \|\tilde\sigmal(s)\|_H^{\ell/2}\ds
    \leq 
c\tilde\E\int_0^t\left(
    \|\Psi_\lambda(\tilde\phil(s))\|_{L^{1}(D)}^{\ell/2} + 
    \|\tilde\sigmal(s)\|_H^{\ell}\right)\ds
\]
and 
conclude thanks to the Gronwall lemma.
If it holds $\chi=0$,  we are in the setting  $q>1$ by \cref{Ass:Psi}:
hence, one readily has from the arbitrariness of $\ell$ in
\cref{est:luca2} that 
\[
  \tilde\E\sups\|\tilde\sigmal(s)\|^{\frac{\ell q}{2(q-1)}}_H\leq 
  c_\ell
\]
so that the last term on the right-hand side can be handled as
\begin{align*}
&\tilde\E\int_0^t
    \|\Psi_\lambda'(\tilde\phil(s))\|_{L^{1}(D)}^{\ell/2}
    \|\tilde\sigmal(s)\|_H^{\ell/2}\ds\\
    &\leq 
c\tilde\E\int_0^t\left(1+
    \|\Psi_\lambda(\tilde\phil(s))\|_{L^{1}(D)}^{\frac\ell{2q}}\right)
    \|\tilde\sigmal(s)\|_H^{\frac\ell2}\ds\\
    &\leq c
    \tilde\E\int_0^t\left(1+
    \|\Psi_\lambda(\tilde\phil(s))\|_{L^{1}(D)}^{\ell/2}\right)\ds
    +c\tilde\E\int_0^t
    \|\tilde\sigmal(s)\|_H^{\frac{\ell q}{2(q-1)}}\ds\\
    &\leq c + c
    \tilde\E\int_0^t\left(1+
    \|\Psi_\lambda(\tilde\phil(s))\|_{L^{1}(D)}^{\ell/2}\right)\ds
\end{align*}
and one can still conclude by the Gronwall lemma. 
If $m_1=\bar m_1$ is constant (hence again $q>1$),  
one can perform 
It\^o formula for the $H$-norm of $\tilde\phil$, which yields, 
together with It\^o formula for the $H$-norm
of $\tilde\sigmal$, that
\begin{align*}
    &\sups\|\tilde\phil(s)\|_H^2+2\eps^2\bar m_1
    \|\Delta\tilde\phil\|_{L^2(0,t; H)}^2
    +\sups\|\tilde\sigmal(s)\|_H^2
    +2m_0\|\nabla\tilde\sigmal\|_{L^2(0,t; H)}^2\\
    &\leq\|\phi_0\|_H^2 + \|\sigma_0\|_H^2
    +2\chi(\bar m_1 + m_\infty)\int_0^t\|\nabla\tilde\phil(s)\|_H
    \|\nabla\tilde\sigmal(s)\|_H\ds\\
    &\qquad-2\int_{D_t}\Psi_\lambda''(\tilde\phil)|\nabla\tilde\phil|^2
    +2\int_{D_t}(\beta\tilde\sigmal-\alpha)f(\tilde\phil)\tilde\phil\\
    &\qquad+\|G_1(\tilde\phil)\|^2_{L^2(0,t;\mathcal L^2(U_1,H))}
    +2\int_0^t\left(\tilde\phil(s), 
    G_1(\tilde\phil(s))\d \tilde W_1(s)\right)_D\\
    &\qquad+\|G_2(\tilde\sigmal)\|^2_{L^2(0,t;\mathcal L^2(U_2,H))}
    +2\int_0^t\left(\tilde\sigmal(s), 
    G_2(\tilde\sigmal(s))\d \tilde W_2(s)\right)_D.
\end{align*}
Noting that 
\begin{align*}
    &2\chi(\bar m_1 + m_\infty)\int_0^t\|\nabla\tilde\phil(s)\|_H
    \|\nabla\tilde\sigmal(s)\|_H\ds
    -2\int_{D_t}\Psi_\lambda''(\tilde\phil)|\nabla\tilde\phil|^2\\
    &\leq m_0\|\nabla\tilde\sigmal\|^2_{L^2(0,t; H)}
    +\left(\frac{\chi^2(\bar m_1+m_\infty)^2}{m_0}+2C_\Psi\right)\|\nabla\tilde\phil\|^2_{L^2(0,t; H)}\\
    &\leq m_0\|\nabla\tilde\sigmal\|^2_{L^2(0,t; H)}
    +\eps^2\bar m_1\|\Delta\tilde\phil\|^2_{L^2(0,t; H)}
    +c\|\tilde\phil\|^2_{L^2(0,t; H)}
\end{align*}
and that all the remaining terms can be handled using the arguments
already performed in the previous section, we still deduce in particular
that 
\[
\tilde\E\sups\|\tilde\sigmal(s)\|^{\frac{\ell q}{2(q-1)}}_H\leq 
  c_\ell
\]
and this allows us to conclude as in the previous case.
\end{proof}

We have obtained then the estimates
\begin{align}
	\label{est1_lam}
	\norm{\tilde\phil}_{L^\ell(\tilde\Omega; L^\infty(0,T; V))}&\leq c,\\
	\label{est2_lam}
	\norm{\tilde\mul}_{L^{\ell/2}(\tilde\Omega; L^2(0,T; V))} + 
	\norm{\nabla\tilde\mul}_{L^\ell(\tilde\Omega; L^2(0,T; H))} &\leq c, \\	\label{est3_lam}
	\norm{\tilde\sigmal}_{L^\ell(\tilde\Omega; L^\infty(0,T; H))\cap
 L^\ell(\tilde\Omega; L^2(0,T; V))}&\leq c.
\end{align}
In the same way as before, we bound $G_1(\tilde\phil)$ and $G_2(\tsigmal)$
thanks to \cref{Ass:Noi} as 
$$\begin{aligned}
    \norm{G_{1}(\tilde\phil)}_{L^\infty(\tilde\Omega\times(0,T); \calL^2(U_1,H))
	\cap L^\ell(\tilde\Omega; L^\infty(0,T; \calL^2(U_1,V)))} &\leq c, \\
 \norm{G_{2}(\tilde\sigmal)}_{L^\infty(\tilde\Omega\times(0,T); \calL^2(U_2,H))
	\cap L^\ell(\tilde\Omega; L^\infty(0,T; \calL^2(U_2,V)))} &\leq c. 
\end{aligned}$$
As usual, these imply a bound of the stochastic integrals by
by \cite[Lem.~2.1]{flandoli1995martingale} in form
$$\begin{aligned}
    \norm{\int_0^\cdot G_{1}(\tilde\phil(s))\,\d \tilde W_1(s)}_{L^\ell(\tilde\Omega; W^{s,\ell}(0,T; V))} &\leq c_{s}, \\
 \norm{\int_0^\cdot G_{2}(\tilde\sigmal(s))\,\d \tilde W_2(s)}_{L^\ell(\tilde\Omega; W^{s,\ell}(0,T; V))} &\leq c_{s}.
\end{aligned}$$
for every $s \in (0,1/2)$, and
by comparison in the equations
we also obtain the estimates
\begin{equation}\label{est4_lam}
\begin{aligned}
\norm{\tilde \phil}_{L^\ell(\tilde\Omega; W^{\bar s,\ell}(0,T; V^*))}\leq c, \\
\norm{\tilde \sigmal}_{L^\ell(\tilde\Omega; W^{\bar s,\ell}(0,T; V^*))}\leq c,
\end{aligned}
\end{equation}
where 
$\bar s\in(1/\ell,1/2)$ is fixed. 

Finally, since $\tilde\mul=-\eps^2\Delta\tilde\phil+\Psi'_\lambda(\tilde\phil)$, 
testing 
by $\gamma_\lambda(\tilde\phil)=
\Psil'(\tilde\phil)+C_\Psi \tilde\phil$
and rearranging the terms
we have
\begin{align*}
	(\gamma_\lambda'(\tilde\phil),|\nabla\tilde\phil|^2)_D
	+\|\gamma_\lambda(\tilde\phil)\|^2_H&=(\tilde\mul+C_\Psi\tilde\phil,\gamma_\lambda(\tilde\phil))_D\\
 &\leq\frac12\|\gamma_\lambda(\tilde\phil)\|^2_H
 +\|\tilde\mul\|^2_H + C_\Psi^2\|\tilde\phil\|_H^2.
\end{align*}
Since the first term on the left-hand side is nonnegative by the monotonicity of $\gamma_\lambda$,
the Young inequality yields, after integrating in time,
\[
\norm{\Psil'(\tilde\phil)}_{L^2(0,T; H)}^2\leq
4\norm{\tilde\mul}_{L^2(0,T; H)}^2 
+6C_\Psi^2\norm{\tilde\phil}_{L^2(0,T; H)}^2,
\]
so that by \eqref{est1_lam}--\eqref{est2_lam} we have
\begin{equation}
\label{est5_lam}
\norm{\Psil'(\tilde\phil)}_{L^{\ell/2}(\tilde\Omega; L^2(0,T; H))}\leq c.
\end{equation}
By comparison we deduce that 
$\Delta\tilde\phil\in L^{\ell/2}(\tilde\Omega; L^2(0,T; H))$ and by elliptic regularity
\begin{equation}\label{est6_lam}
\norm{\tilde\phil}_{L^{\ell/2}(\tilde\Omega; L^2(0,T; V_2))}\leq c.
\end{equation}

\subsection{Step 5: passage to the limit $\lambda \to 0$}
\label{Sec:Step5:LimLambda}
In this step, we pass to the limit as $\lambda\to 0$ in the Yosida approximation \cref{Def:SystemYosida}
and recover a martingale solution to the original problem \eqref{Def:System}.
Since the arguments are very similar to the ones in Subsection~\ref{Sec:Step3:LimN} when we passed to the limit $n \to \infty$,
we shall omit the exact details.

\begin{proof}[Proof of \cref{Theorem:Main}]
By the Aubin--Lions compactness lemma one has the compact inclusions
\begin{align*}
    L^2(0,T; V_2)\cap 
    L^\infty(0,T; V)\cap W^{\bar s,\ell}(0,T; V^*)
    &\hookrightarrow\hookrightarrow C^0([0,T]; H)\cap L^2(0,T; V),\\
    L^2(0,T; V)\cap 
    L^\infty(0,T; H)\cap W^{\bar s,\ell}(0,T; V^*)
    &\hookrightarrow\hookrightarrow C^0([0,T]; V^*)\cap L^2(0,T; H).
\end{align*}
Hence, using the estimates \eqref{est1_lam} and \eqref{est4_lam}
and arguing exactly as in Subsection~\ref{Sec:Step3:LimN},
one readily deduces that the family of laws 
$(\tilde\phil, G_1(\tilde\phil)\cdot \tilde W_1, \tilde W_1)_\lambda$
is tight on the product space
\[
\left(C^0([0,T]; H)\cap L^2(0,T; V)\right)
\times C^0([0,T]; H) \times C^0([0,T]; \tilde U_1),
\]
and the family of laws 
$(\tilde\sigmal, G_2(\tilde\sigmal)\cdot \tilde W_2, \tilde W_2)_\lambda$
is tight on the product space
\[
\left(C^0([0,T]; V^*)\cap L^2(0,T; H)\right) \times C^0([0,T]; H) \times C^0([0,T]; \tilde U_2),
\]
The Prokhorov and Jakubowski--Skorokhod theorems
(see the discussion in \cref{Subsec:Stochastic})
ensure the existence of a further probability space $(\hat\Omega,\hat\calF,\hat\P)$
and measurable 
maps $\Xi_\lambda:(\hat\Omega,\hat\calF)\to(\tilde\Omega,\tilde\calF)$ such that 
$\hat\P\circ \Xi_\lambda^{-1}=\tilde\P$ for every $\lambda>0$ and 
\begin{align*}
	\hat\phil:=\tilde\phil\circ\Xi_\lambda\to \hat\phi
	\quad&\text{in } L^\ell(\hat\Omega;C^0([0,T]; H)
   \cap L^2(0,T; V)) \quad\,\forall\,p<\ell,\\
	\hat\phil\stackrel{*}{\rightharpoonup} \hat\phi 
	\quad&\text{in } L^\ell_w(\hat\Omega; L^\infty(0,T; V)),\\
	\hat\phil\rightharpoonup \hat\phi \quad&\text{in } L^{\ell/2}(\hat\Omega; L^2(0,T; V_2))
	\cap L^\ell(\hat\Omega; W^{\bar s,\ell}(0,T; V^*)),\\
	\hat\mul:=\tilde\mul\circ\Xi_\lambda \rightharpoonup\hat\mu
	\quad&\text{in } L^{\ell/2}(\hat\Omega; L^2(0,T; V)),\\
	\nabla\hat\mul \rightharpoonup\nabla\hat\mu
	\quad&\text{in } L^{\ell}(\hat\Omega; L^2(0,T; H)),\\
 \hat\sigmal:=\tilde\sigmal\circ\Xi_\lambda\to \hat\sigma
	\quad&\text{in } L^\ell(\hat\Omega;C^0([0,T]; V^*)
 \cap L^2(0,T; H)) \quad\,\forall\,p<\ell,\\
	\hat\sigmal\stackrel{*}{\rightharpoonup} \hat\sigma 
	\quad&\text{in } L^\ell_w(\hat\Omega; L^\infty(0,T; H)),\\
	\hat\sigmal\rightharpoonup \hat\sigma \quad&\text{in } 
 L^{\ell}(\hat\Omega; L^2(0,T; V))
	\cap L^\ell(\hat\Omega; W^{\bar s,\ell}(0,T; V^*)),\\
	\Psil'(\hat\phil)\rightharpoonup \hat\xi
	\quad&\text{in } L^{\ell/2}(\hat\Omega; L^2(0,T; H)),
\end{align*}
for some measurable processes
\begin{align*}
	\hat\phi&\in 
    L^\ell(\hat\Omega; W^{\bar s,\ell}(0,T; V^*)
    \cap C^0([0,T]; H))
    \cap L^\ell_w(\hat \Omega; L^\infty(0,T; V))\\
    &\qquad\cap L^{\ell/2}(\hat\Omega; L^2(0,T; V_2)),\\
	\hat\mu &\in L^{\ell/2}(\hat\Omega; L^2(0,T; V)), 
	\quad\nabla\hat\mu\in L^\ell(\hat\Omega; L^2(0,T; H)),\\
    \hat\sigma&\in 
    L^\ell(\hat\Omega; W^{\bar s,\ell}(0,T; V^*)
    \cap C^0([0,T]; V^*)\cap L^2(0,T; V))
    \cap L^\ell_w(\hat\Omega; L^\infty(0,T; H)),\\
	\hat\xi &\in  L^{\ell/2}(\hat\Omega; L^2(0,T; H)),
\end{align*}
as well as
\begin{align*}
	\hat I_{1,\lambda}:=(G_1(\tilde\phil)\cdot \tilde W_1)\circ\Xi_\lambda \to \hat I_1
	\quad&\text{in } L^\ell(\hat\Omega; C^0([0,T]; H)) \quad\forall\,p<\ell,\\
 \hat I_{2,\lambda}:=(G_2(\tilde\sigmal)\cdot \tilde W_2)\circ\Xi_\lambda \to \hat I_2
	\quad&\text{in } L^\ell(\hat\Omega; C^0([0,T]; H)) \quad\forall\,p<\ell,\\
	\hat W_{1,\lambda}:=\tilde W_1\circ\Xi_\lambda \to \hat W_1 
	\quad&\text{in } L^\ell(\hat \Omega; C^0([0,T]; \tilde U_1)) \quad\forall\,p<\ell, \\
 \hat W_{2,\lambda}:=\tilde W_2\circ\Xi_\lambda \to \hat W_2 
	\quad&\text{in } L^\ell(\hat \Omega; C^0([0,T]; \tilde U_2)) \quad\forall\,p<\ell,
\end{align*}
for some measurable processes 
\begin{align*}
	\hat I_1, \hat I_2 &\in L^\ell(\hat \Omega; C^0([0,T]; H)),\\
\hat W_1 &\in L^\ell(\hat\Omega; C^0([0,T]; \tilde U_1)),\\
   \hat W_2 &\in L^\ell(\hat\Omega; C^0([0,T]; \tilde U_2)).
\end{align*}
Since for $i=1,2$ 
$G_i:H\to\calL^2(U_i,H)$ is Lipschitz-continuous, we also have 
\[
\begin{aligned}
G_1(\hat\phil) \to G_1(\hat\phi) \quad\text{in } L^\ell(\hat\Omega; L^2(0,T; \calL^2(U_1,H)))
\quad\forall\,p<\ell, \\
G_2(\hat\sigmal) \to G_2(\hat\sigma) \quad\text{in } L^\ell(\hat\Omega; L^2(0,T; \calL^2(U_2,H)))
\quad\forall\,p<\ell.
\end{aligned}
\]
Moreover, since it holds $\Psi'=\gamma - C_\Psi I$ and $\gamma$ is a maximal monotone function,
by the strong-weak closure of maximal monotone operators (see \cite[Lem.~2.3]{barbu2010nonlinear})
and the strong convergence of $(\hat\phil)_\lambda$,
we have that 
\[
\xi = \Psi'(\hat\phi)  \quad\text{a.e.~in } \hat\Omega\times(0,T)\times D.
\]
Following the same argument as in \cref{Sec:Step3:LimN},
thanks to
the strong convergences of $\hat\phil\to\hat\phi$ and 
$G_1(\hat\phil)\to G_1(\hat\phi)$ proved above,
it is
a classical argument to show that $\hat I_{i,\lambda}$, $i=1,2$, are
the $H$-valued martingales given by
\[
\hat I_{1,\lambda}=\int_0^\cdot G_1(\hat\phil(s))\,
\d \hat W_{1,\lambda}(s), \quad \hat I_{2,\lambda}
=\int_0^\cdot G_2(\hat\sigmal(s))\,
\d \hat W_{2,\lambda}(s),
\]
and 
\[
\hat I_1=\int_0^\cdot G_1(\hat\phi(s))\,\d \hat W_1(s), \quad
\hat I_2=\int_0^\cdot G_2(\hat\sigma(s))\,\d \hat W_2(s).
\]

Now, since $\hat\P\circ\Xi_\lambda=\tilde \P$ for every $\lambda>0$,
it follows that
\begin{align*}
	(\hat\phil(t),v)_D  
	&=(\phi_0,v)_D -(m(\hat\phil)\nabla(\hat\mul-\chi\hat\sigmal),\nabla v)_{D_t}
 \\&\quad+ (\beta\hsigmal-\alpha,f(\hphil)v)_{D_t} +
	\left(\int_0^tG_1(\hat\phil(s))\,\d \hat W_{1,\lambda}(s),v\right)D \\
(\hsigmal(t),v)_D
	&=(\sigma_0,v)_D- (m_2(\hsigmal)
	 \nabla(\hsigmal-\chi\hat\phil),\nabla v)_{D_t}
  \\&\quad- (\delta\hsigmal,f(\hphil)v)_{D_t} + 
	\left(\int_0^tG_2(\hsigmal(s))\,\d \tilde W_2(s),v\right)_D
\end{align*}
for every $t\in[0,T]$, $\hat\P$-almost surely, where
$\hat\mul=-\eps^2\Delta\hat\phil+\Psi_\lambda'(\hat\phil)$.
Hence, 
using the convergences proved above, 
the continuity and boundedness of $m_1$ and $m_2$
together with the 
dominated convergence theorem, we 
can let $\lambda\to 0$ in the variational formulations and
obtain exactly \eqref{Def:SystemVariational}, and $\hat\mu=-\eps^2\Delta\hat\phi+\Psi'(\hat\phi)$.
Finally, the energy inequality \eqref{Eq:FinalEnergyEstimate} as stated in \cref{Theorem:Main} follows from 
applying 
$\Xi_\lambda$ to \eqref{Est:GalerkinFinal} and letting $\lambda\searrow0$ by the weak-lower semicontinuity of the norms.
\end{proof}

\subsection{Strong well-posedness}
We focus here on the proof of the second main result:
let us suppose that the two mobilities are constant, i.e.~$m_1=\bar m_1$
and $m_2=\bar m_2$ some $\bar m_1,\bar m_2>0$.

\begin{proof}[Proof of \cref{Theorem:Main2}]
Let $(\phi_1, \mu_1, \sigma_1),(\phi_2,\mu_2,\sigma_2)$ 
be two solutions defined on the same probability space, 
associated to some initial data 
$(\phi_0^1, \sigma_0^1), (\phi_0^2, \sigma_2)$ satisfying \cref{Ass:Ini}.
The It\^o formulas for the square of the $H$ norms of $\phi_1-\phi_2$
and $\sigma_1-\sigma_2$ yield
\begin{align*}
    &\frac12\|(\phi_1-\phi_2)(t)\|_H^2 +
    \frac12\|(\sigma_1-\sigma_2)(t)\|_H^2
    +\delta\int_{D_t}f(\phi_1)|\sigma_1-\sigma_2|^2\\
    &\qquad+\bar m_1\eps^2\int_0^t\|\Delta(\phi_1-\phi_2)(s)\|_H^2\ds
    +\bar m_2\int_0^t\|\nabla(\sigma_1-\sigma_2)(s)\|_H^2\ds\\
    &=\frac12\|\phi_0^1-\phi_0^2\|_H^2 +
    \frac12\|\sigma_0^1-\sigma_0^2\|_H^2
    +\chi(\bar m_1+\bar m_2)\int_{D_t}\nabla(\sigma_1-\sigma_2)\cdot\nabla(\phi_1-\phi_2)\\
    &\qquad+\beta\int_{D_t}f(\phi_1)(\sigma_1-\sigma_2)(\phi_1-\phi_2)
    -\alpha\int_{D_t}(f(\phi_1)-f(\phi_2))(\phi_1-\phi_2)\\
    &\qquad+\int_{D_t}\sigma_2(f(\phi_1)-f(\phi_2))
    \left[\beta(\phi_1-\phi_2)-\delta(\sigma_1-\sigma_2)\right]\\
    &\qquad-\int_{D_t}(\Psi'(\phi_1)-\Psi'(\phi_2))\Delta(\phi_1-\phi_2)\\
    &\qquad+\frac12\int_0^t\left[
    \|G_1(\phi_1)-G_1(\phi_2)\|_{\mathcal L^2(U_1,H)}^2+
    \|G_2(\sigma_1)-G_2(\sigma_2)\|_{\mathcal L^2(U_2,H)}^2
    \right]\ds\\
    &\qquad+\int_0^t\left((\phi_1-\phi_2)(s),
    (G_1(\phi_1(s))-G_1(\phi_2(s)))\d W_1(s)\right)_D\\
    &\qquad+\int_0^t\left((\sigma_1-\sigma_2)(s),
    (G_2(\sigma_1(s))-G_2(\sigma_2(s)))\d W_2(s)\right)_D.
\end{align*}
Let us estimate the terms on the right-hand side separately.
First,  since $V_2$ is compact in $V$, we have 
\begin{align*}
    \chi(\bar m_1+\bar m_2)\int_{D_t}\nabla(\sigma_1-\sigma_2)\cdot\nabla(\phi_1-\phi_2)&\leq
    \frac{\bar m_2}2\int_0^t\|\nabla(\sigma_1-\sigma_2)(s)\|_H^2\ds\\
    &+\frac{\bar m_1\eps^2}4\int_0^t\|\Delta(\phi_1-\phi_2)(s)\|_H^2\ds\\
    &+c\int_0^t\|(\phi_1-\phi_2)(s)\|_H^2\ds.
\end{align*}
Moreover, since $f$ is Lipschitz and bounded, the fourth and fifth
integrals on the right-hand side are controlled, thanks to the Young inequality, by 
\begin{align*}
  &\beta\int_{D_t}f(\phi_1)(\sigma_1-\sigma_2)(\phi_1-\phi_2)
    -\alpha\int_{D_t}(f(\phi_1)-f(\phi_2))(\phi_1-\phi_2)\\
  &\leq c\int_0^t\left[\|(\sigma_1-\sigma_2)(s)\|_H^2
  +\|(\phi_1-\phi_2)(s)\|_H^2\right]\ds.
\end{align*}
Furthermore, as for the sixth term, we use the H\"older inequality,
the Lipschitz-continuity of $f$,
the inclusion $H^{\frac74}(D)\hookrightarrow L^\infty(D)$, 
the compact inclusion $V_2\hookrightarrow H^{\frac74}(D)$, and obtain
\begin{align*}
    &\int_{D_t}\sigma_2(f(\phi_1)-f(\phi_2))
    \left[\beta(\phi_1-\phi_2)-\delta(\sigma_1-\sigma_2)\right]\\
    &\leq c\int_0^t\|\sigma_2(s)\|_{H}
    \|(\phi_1-\phi_2)\|_{H^{\frac74}(D)}
    \left[\|(\phi_1-\phi_2)\|_{H}+
    \|(\sigma_1-\sigma_2)\|_{H}\right]\ds\\
    &\leq \int_0^t\|(\phi_1-\phi_2)\|_{H^{\frac74}(D)}^2\ds+
    c\int_0^t\|\sigma_2(s)\|_{H}^2
    \left[\|(\phi_1-\phi_2)\|_{H}^2+
    \|(\sigma_1-\sigma_2)\|_{H}^2\right]\ds\\
    &\leq \frac{\bar m_1\eps^2}4\int_0^t\|\Delta(\phi_1-\phi_2)(s)\|_H^2\ds\\
    &\qquad+
    c\int_0^t(1+\|\sigma_2(s)\|_{H}^2)
    \left[\|(\phi_1-\phi_2)\|_{H}^2+
    \|(\sigma_1-\sigma_2)\|_{H}^2\right]\ds.
\end{align*}
The seventh term can be handled by using the mean-value theorem,
the H\"older inequality, 
the inclusion $H^{\frac74}(D)\hookrightarrow L^\infty(D)$,
and the fact that $|\Psi''|\leq C_\Psi(1+|\Psi|)$, as
\begin{align*}
  &-\int_{D_t}(\Psi'(\phi_1)-\Psi'(\phi_2))\Delta(\phi_1-\phi_2)\\
  &\leq\frac{\bar m_1\eps^2}4\int_0^t\|\Delta(\phi_1-\phi_2)(s)\|_H^2\ds\\
  &\qquad+
  c\int_0^t\left(1+\|\Psi(\phi_1(s))\|_{L^1(D)}^2
  +\|\Psi(\phi_2(s))\|_{L^1(D)}^2\right)
  \|(\phi_1-\phi_2)(s)\|_{H^{\frac74}(D)}^2\ds.
\end{align*}
Eventually, by the Lipschitz continuity of $G_1$ and $G_2$ one has
\begin{align*}
    &\frac12\int_0^t\left[
    \|G_1(\phi_1)-G_1(\phi_2)\|_{\mathcal L^2(U_1,H)}^2+
    \|G_2(\sigma_1)-G_2(\sigma_2)\|_{\mathcal L^2(U_2,H)}^2
    \right]\ds\\
    &\leq c\int_0^t\left(
    \|(\phi_1-\phi_2)(s)\|_{H}^2+
    \|(\sigma_1-\sigma_2)(s)\|_H^2
    \right)\ds.
\end{align*}
Similarly, for every stopping  time $\tau\in[0,T]$,
the Burkholder--Davis--Gundy and Young inequalities yield
\begin{align*}
    &\E\sup_{s\in[0,\tau]}\int_0^s\left((\phi_1-\phi_2)(r),
    (G_1(\phi_1(r))-G_1(\phi_2(r)))\d W_1(r)\right)_D\\
    &\leq\frac14\E\sup_{s\in[0,\tau]}\|(\phi_1-\phi_2)(s)\|_{H}^2
    +c\E\int_0^\tau\|(\phi_1-\phi_2)(s)\|_{H}^2\ds
\end{align*}
and
\begin{align*}
    &\E\sup_{s\in[0,\tau]}\int_0^s\left((\sigma_1-\sigma_2)(r),
    (G_2(\sigma_1(r))-G_2(\sigma_2(r)))\d W_2(r)\right)_D\\
    &\leq\frac14\E\sup_{s\in[0,\tau]}\|(\sigma_1-\sigma_2)(s)\|_{H}^2
    +c\E\int_0^\tau\|(\sigma_1-\sigma_2)(s)\|_{H}^2\ds.
\end{align*}
We define now the sequence of stopping times
\[
  \tau_n:=\inf\left\{s\in[0,T]: \|\sigma_2(s)\|_H^2
  +\|\Psi(\phi_1(s))\|_{L^1(D)} + 
  \|\Psi(\phi_2(s))\|_{L^1(D)} \geq n\right\}, \quad n\in\mathbb N,
\]
which is well-defined and satisfies $\tau_n\nearrow T$ almost surely
as $n\to\infty$ since $\sigma_2 \in L^\infty(0,T; H)$
and $\Psi(\phi_1),\Psi(\phi_2)\in L^\infty(0,T; L^1(D))$ almost surely.

Putting everything together, by taking supremum in time $t\in[0,\tau_n]$
and then expectations,
we get, after rearranging the terms,
\begin{align*}
    &\frac14\E\sup_{s\in[0,\tau_n]}\|(\phi_1-\phi_2)(s)\|_{H}^2
    +\frac14\E\sup_{s\in[0,\tau_n]}\|(\sigma_1-\sigma_2)(s)\|_{H}^2\\
    &\qquad+\frac{\bar m_1\eps^2}4\E
    \int_0^{\tau_n}\|\Delta(\phi_1-\phi_2)(s)\|_H^2\ds
    +\frac{\bar m_2}2\E
    \int_0^{\tau_n}\|\nabla(\sigma_1-\sigma_2)(s)\|_H^2\ds\\
    &\leq\frac12\|\phi_0^1-\phi_0^2\|_H^2 +
    \frac12\|\sigma_0^1-\sigma_0^2\|_H^2\\
    &\qquad+c\E\int_0^{\tau_n}\left(1+\|\sigma_2(s)\|_H^2\right)
    \left(\|(\phi_1-\phi_2)(s)\|_H^2+ 
    \|(\sigma_1-\sigma_2)(s)\|_H^2\right)\ds\\
    &\qquad+
    c\E\int_0^{\tau_n}\left(1+\|\Psi(\phi_1(s))\|_{L^1(D)}^2
  +\|\Psi(\phi_2(s))\|_{L^1(D)}^2\right)
  \|(\phi_1-\phi_2)(s)\|_{H^{\frac74}(D)}^2\ds.
\end{align*}
By definition of $\tau_n$ and again the compact inclusion 
$V_2\hookrightarrow H^{\frac74}(D)$,
this implies that for every $n\in\mathbb N$
there exists a constant $c_n>0$ such that 
\begin{align*}
    &\frac14\E\sup_{s\in[0,\tau_n]}\|(\phi_1-\phi_2)(s)\|_{H}^2
    +\frac14\E\sup_{s\in[0,\tau_n]}\|(\sigma_1-\sigma_2)(s)\|_{H}^2\\
    &\qquad+\frac{\bar m_1\eps^2}4\E
    \int_0^{\tau_n}\|\Delta(\phi_1-\phi_2)(s)\|_H^2\ds
    +\frac{\bar m_2}2\E
    \int_0^{\tau_n}\|\nabla(\sigma_1-\sigma_2)(s)\|_H^2\ds\\
    &\leq\frac12\|\phi_0^1-\phi_0^2\|_H^2 +
    \frac12\|\sigma_0^1-\sigma_0^2\|_H^2\\
    &\qquad+c_n\E\int_0^{\tau_n}
    \left(\|(\phi_1-\phi_2)(s)\|_{H^{\frac74}(D)}^2+ 
    \|(\sigma_1-\sigma_2)(s)\|_H^2\right)\ds\\
    &\leq\frac12\|\phi_0^1-\phi_0^2\|_H^2 +
    \frac12\|\sigma_0^1-\sigma_0^2\|_H^2
    +\frac{\bar m_1\eps^2}8\E
    \int_0^{\tau_n}\|\Delta(\phi_1-\phi_2)(s)\|_H^2\ds\\
    &\qquad+c_n\E\int_0^{\tau_n}
    \left(\|(\phi_1-\phi_2)(s)\|_{H}^2+ 
    \|(\sigma_1-\sigma_2)(s)\|_H^2\right)\ds.
\end{align*}
Rearranging the terms and using the Gronwall lemma yields the conclusion.
\end{proof}
  \section{Numerical scheme and simulations}
\label{Sec:Numerical}

In this section, we delve into the details of our numerical discretization scheme (\cref{Subsec:Discrete}) and the resulting simulations on tumor growth (\cref{Subsec:Simul}). 

\subsection{Discretization scheme} \label{Subsec:Discrete}
For an overview of numerical methods for SPDEs, we refer to the books \cite{lord2014introduction,zhang2017numerical,jentzen2011taylor}. Regarding the spatial approximation of the stochastic tumor model, we introduce a finite-dimensional space $V_N \subset V$ and denote by $\Pi_N$ the orthogonal projection $\Pi_N:H \to V_N$. For each variable, we select the bilinear rectangular finite element space $Q_1$. We seek an approximation $(\phi,\mu,\sigma) \in V_N \times V_N \times V_N$, defined by
\begin{equation*} 
\begin{aligned}
  (\d\phi,v)_D  ={}& (m_1(\phi) (\nabla \mu- \chi \nabla\sigma),\nabla v)_{D} + ((\beta \sigma-\alpha) f(\phi),v)_{D} + (  G_1(\phi)\dW_1,v )_D, \\
  (\mu,v)_{D} ={}& (\Psi'(\phi),v)_{D} +\eps^2 (\nabla\phi,\nabla v)_{D}   \\
  (\d\sigma,v)_D  ={}& (m_2(\sigma) (\nabla \sigma- \chi \nabla\phi),\nabla v)_{D} - \delta (\sigma f(\phi),v)_{D} + ( G_2(\sigma\dW_2,v)_D,
\end{aligned}
\end{equation*}
for any $v \in V_N$ and $t \in [0,T]$, equipped with the initial conditions $\phi(0)=\Pi_N  \phi_0$ and $\sigma(0)=\Pi_N  \sigma_0$. 
We write the solution in terms of the finite element basis $(w_j)_j$ as follows
$$\phi(t,x)=\sum_{j=1}^N \phi_j(t) w_j(x), \quad \mu(t,x)=\sum_{j=1}^N \mu_j(t) w_j(x), \quad \sigma(t,x)=\sum_{j=1}^N \sigma_j(t) w_j(x),$$
and we write the coefficient functions in the vector $\bm{\phi}=[\phi_1,\dots,\phi_N]^\top$ and for $\bm{\sigma}$ and $\bm{\mu}$ analogously. Then inserting the ansatz into the system, it gives
\begin{equation}
\begin{aligned}
  M \d \bm{\phi}  ={}& \big[K_{m_1(\phi)} \bm{\mu} - \chi K_{m_1} \bm{\sigma} + \beta M_{f(\phi)} \bm{\sigma} - \alpha \bm{f}(\bm{\phi}) \big] \dt+ \bm{G_1}(\bm{\phi}) \dW_1(t)  \\
  M\bm{\mu} ={}& \bm{\Psi}'(\bm{\phi}) + \eps^2 K \bm{\phi},   \\
  M \d \bm{\sigma}  ={}& \big[K_{m_2(\sigma)} \bm{\sigma} - \chi K_{m_1} \bm{\phi} - \delta M_{f(\phi)} \bm{\sigma}  \big] \dt+ \bm{G_2}(\bm{\sigma}) \dW_2(t)  \\
\end{aligned}
\end{equation}
where $M=[(w_i,w_j)_D]_{i,j=1,\dots,N}$ is the mass matrix, $M_{f(\bm{\phi})}=[(f(\bm{\phi})w_i,w_j)_D]_{i,j=1,\dots,N}$ the weighted mass matrix, $K=[(\nabla w_i,\nabla w_j)_D]_{i,j=1,\dots,N}$ the stiffness matrix, $K_{m_1(\bm{\phi})}$ the weighted stiffness matrix, $\bm{f}(\bm{\phi})=[(f(\phi),w_j)_D]_{j=1,\dots,N}$, and analogously for $\bm{\Psi}$. Finally, $\bm{G_1}:\bbR^N\to \calL(U_1,\bbR^N)$ is defined by $\bm{G_1}(\bm{\phi}) u = [(G_1(\phi)u,w_j)_D]_{j=1,\dots,N}$ for $u \in U_1$ and in the same way for $G_2(\sigma)$.

For the discretization in time, we approximate $\bm{\phi}(t)$ at $t=t_n=n\Delta t$ by $\bm{\phi}_n$. Because $V_N$ is finite-dimensional, the system consists of SODEs, and we can apply the semi-implicit Euler--Maruyama method with a time step $\Delta t > 0$ to define $\bm{\phi}_n$. That is, given $(\bm{\phi}_n,\bm{\mu}_n,\bm{\sigma}_n)$, we find $(\bm{\phi}_{n+1},\bm{\mu}_{n+1},\bm{\sigma}_{n+1})$ by iterating:
\begin{equation}
\begin{aligned}
  M \bm{\phi}_{n+1}  ={}& M \bm{\phi}_{n}+K_{m_1^n} \bm{\mu}_{n+1} - \chi K_{m_1^n} \bm{\sigma}_{n+1} + \beta M_{f^n} \bm{\sigma}_n - \alpha \bm{f}(\bm{\phi}_n) + \bm{G_1}(\bm{\phi}_n) \Delta \bm{W}_{1,n}  \\
  M\bm{\mu}_{n+1} ={}& \bm{\Psi_c}'(\bm{\phi}_{n+1})+\bm{\Psi_e}'(\bm{\phi}_n) + \eps^2 K \bm{\phi}_{n+1},   \\
  M \bm{\sigma}_{n+1}  ={}& M \bm{\sigma}_{n}+K_{m_2^n} \bm{\sigma}_{n+1} - \chi K_{m_1^n} \bm{\phi}_{n+1} - \delta M_{f^n} \bm{\sigma}_n + \bm{G_2}(\bm{\phi}_n) \Delta\bm{W}_{2,n}  \\
\end{aligned}
\end{equation}
for initial data $\bm{\phi}_0 = \bm{\phi}(0)$ and $\bm{\sigma}_0 = \bm{\sigma}(0)$. Here, we introduced $m_1^n=m_1(\bm{\phi}_{n})$, $\bm{G_1}(\bm{\phi}_n) \in \R^{N\times N}$ with $\bm{G_1}(\bm{\phi}_n)=[(G(\bm{\phi}_n)u_k,w_j)_D]_{j,k=1,\dots,N}$ and $\Delta \bm{W}_{1,n} \in \R^N$ with $\Delta \bm{W}_{1,n}=[(W_1(t_{n+1})-W_1(t_n),u_k)_D]_{k=1,\dots,N}$. and in the same way for $G_2$ and $W_2$. Having the Wiener process $W_1(t)=\sum_{j=1}^\infty \sqrt{q_j^1} e_j^1 \beta_j^1(t)$ where $(e_j^1)_j$ is an ONB for $U_1$, we compute $\bm{G_1}(\bm{\phi}_n) \Delta \bm{W_{1,n}}$ by multiplying the matrix   $\bm{G_1}(\bm{\phi}_n) \in \R^{N\times N}$ by the vector
$$\big[\sqrt{q_1}(\beta_1(t_{n+1}) - \beta_1(t_n)), \ldots, \sqrt{q_{N}}(\beta_{N}(t_{n+1}) - \beta_{N}(t_n))\big]^\top
$$
In fact, we assume that $\bm{G_1}(\bm{\phi}_n)$ is a diagonal matrix with entries $g_1(\bm{\phi}_n^k)$ where $\bm{\phi}_n^k$ is the $k$-th entry of $\bm{\phi}_n$. The same is assumed for $G_2$. We use the classical convex-concave splitting of the nonlinear potential $\Psi=\Psi_e+\Psi_c$ into its expansive $\Psi_e$ and contractive part $\Psi_c$. In fact, in the case of $\Psi(\phi)=\frac14\phi^2(1-\phi^2)$ we set
$\Psi_e(\phi)=\phi^3-\frac32\phi^2-\frac14\phi$ and $\Psi_c(\phi)=\frac34\phi$.
We treat the expansive part explicitly and the contractive part implicitly. 
Like this, we obtain an unconditionally stable scheme in the deterministic case, see \cite{eyre1998unconditionally}, which is additionally linear.

The nonlinear equations are solved by a Newton method in each iterative decoupling-iteration. The system is implemented in the finite element library FEniCS \cite{alnaes2015fenics}.

\subsection{Simulations} \label{Subsec:Simul}

We set the model parameters to $\eps=0.01$, $\chi=5$, $\alpha=1$, $\beta=15$, $\delta=100 $. Moreover, we choose the nonlinear potential $\Psi$ as above and the mobility functions as $m_1(\phi)=10^{-16}+\phi^2(1-\phi)^2$ and $m_2(\sigma)=10$.  We choose the spatial domain $D=[0,1]^2$ with $\dx=0.01$ and the time domain $[0,1]$ with $\dt=0.01$. The initial for the tumor volume fraction is chosen as $\phi_0(x)=\exp(1-1/(1-16|x-\frac12|^2))$. The nutrient's initial is set to zero but we choose the boundary condition $\sigma=1$ on $\partial D$ in contrast to our mathematical analysis. However, the analysis can be carried out in the same way, e.g., see \cite{fritz2019unsteady} in the case of the deterministic setting.

Regarding the stochasticity in the system, we consider $G_1(\phi)=\nu \phi_+(1-\phi)_+$ for different noise levels $\nu$ in the tumor equation. Here, the subscript $\cdot_+$ indicates a cut-off in the sense of $\phi_+= \max\{0,\min\{1,\phi\}\}$ that ensures the boundedness of $\phi$ between $0$ and $1$. We motivate this choice since we only want to have noise in the interface since should be a fundamental part of the tumor's growth process and should not affect fully malignant or healthy cells. Moreover, we consider an additive noise $G_2(\sigma)=1$ in the nutrients. 
 \medskip

\noindent{\bf Test 1 (Tumor and nutrient mass over time).}
To comprehensively capture the influence of the noise effects, we conducted 50 simulations, each yielding a unique result. In \cref{Fig:TumorVolume}, we present the mean (average) and standard deviation (a measure of the variability) of these 50 samples for two distinct noise levels. For the low noise level simulation, we observe that the mean tumor volume increases over time, which aligns with our baseline expectations from the deterministic setting, e.g., see \cite{fritz2023tumor}. However, the introduction of noise has a notable impact. In the high noise level simulation, the mean tumor volume exhibits a more pronounced increase. This is because the stochasticity introduced by the noise acts as a proliferation term in the equation, essentially furthering tumor growth by introducing additional uncertainty into the system. The standard deviation, which quantifies the variability in tumor volume across the 50 samples, reflects the impact of noise. In the case of the low noise level, the standard deviation is smaller and is in close proximity to the mean. This indicates a relatively consistent and predictable tumor growth pattern with less uncertainty. In contrast, the high noise level simulation yields a significantly larger standard deviation, indicating a wider range of possible outcomes, reflecting the increased variability and uncertainty introduced by the noise.

\begin{figure}[H]
	\begin{tikzpicture}
	\definecolor{color0}{HTML}{4E79A7}
	\definecolor{color1}{HTML}{F28E2B}
	\definecolor{color2}{HTML}{E15759}
	\definecolor{color3}{HTML}{76B7B2}
	\definecolor{color4}{HTML}{59A14F}
	\definecolor{color5}{HTML}{EDC948}
	\definecolor{color6}{HTML}{B07AA1}
	\definecolor{color7}{HTML}{FF9DA7}
	\definecolor{color8}{HTML}{9C755F}
	\definecolor{color9}{HTML}{BAB0AC}
	\begin{axis}[
		height=.38\textheight,
        width=.57\textwidth,
		legend pos=north west,
		legend cell align={left},
        ymin=0.079, ymax=0.35,
		xmin=0, xmax=1,
		xtick={0,0.2,0.4,0.6,0.8,1.0},
		]
  		\addplot[ultra thick,color0,smooth,each nth point=2] table [x expr=\coordindex/100, y index=0]{Figures/mean.txt};
  				\addlegendentry{mean}
  		\addplot[name path=us_top,ultra thick,color1,smooth,each nth point=2] table [x expr=\coordindex/100, y index=0]{Figures/meanplustd.txt};
  		\addplot[name path=us_down,ultra thick,color1,smooth,each nth point=2] table [x expr=\coordindex/100, y index=0]{Figures/meanminstd.txt};
		    \addplot[color1!50,fill opacity=0.5] fill between[of=us_top and us_down];
		    \addlegendentry{standard deviation}
	\end{axis}
\end{tikzpicture}	\begin{tikzpicture}
	\definecolor{color0}{HTML}{4E79A7}
	\definecolor{color1}{HTML}{F28E2B}
	\definecolor{color2}{HTML}{E15759}
	\definecolor{color3}{HTML}{76B7B2}
	\definecolor{color4}{HTML}{59A14F}
	\definecolor{color5}{HTML}{EDC948}
	\definecolor{color6}{HTML}{B07AA1}
	\definecolor{color7}{HTML}{FF9DA7}
	\definecolor{color8}{HTML}{9C755F}
	\definecolor{color9}{HTML}{BAB0AC}
	\begin{axis}[
		height=.38\textheight,
        width=.56\textwidth,
		legend pos=north west,
		legend cell align={left},
        ymin=0.079, ymax=0.35,
		xmin=0, xmax=1,
		xtick={0,0.2,0.4,0.6,0.8,1.0},
		yticklabels={},
		]
  		\addplot[ultra thick,color0,smooth,each nth point=2] table [x expr=\coordindex/100, y index=0]{Figures/mean2.txt};
  				\addlegendentry{mean}
  		\addplot[name path=us_top,ultra thick,color1,smooth,each nth point=2] table [x expr=\coordindex/100, y index=0]{Figures/meanplustd2.txt};
  		\addplot[name path=us_down,ultra thick,color1,smooth,each nth point=2] table [x expr=\coordindex/100, y index=0]{Figures/meanminstd2.txt};
		    \addplot[color1!50,fill opacity=0.5] fill between[of=us_top and us_down];
		    \addlegendentry{standard deviation}
	\end{axis}
\end{tikzpicture}
	\caption{Considering 50 samples, the mean and standard deviation  of the tumor volume $t \mapsto \int_D \phi(t,x) \dx$ is shown for the noise intensities $\nu=2.5$ (left) and $\nu=0.5$ (right). \label{Fig:TumorVolume}
	}
\end{figure}

In \cref{Fig:NutrientVolume}, we delve into the dynamic evolution of nutrient volume over time. As in \cref{Fig:TumorVolume}, we present both the mean and standard deviation of 50 simulations for each noise level. Notably, the standard deviation reveals a striking difference between the low and high noise level cases. For the low noise level, the standard deviation is relatively smaller, indicating a more consistent and predictable behavior of nutrient volume. In contrast, the high noise level simulation yields a significantly larger standard deviation, reflecting the increased variability and uncertainty introduced by the noise. While the means of nutrient volume in both low and high noise level cases initially show similar behavior, they exhibit distinctions over time. In both scenarios, nutrient volume increases initially due to the ongoing diffusion process, where nutrients move from the boundary and spread throughout the entire domain. However, the key divergence occurs as tumor growth progresses. The nutrient volume's mean appears more damped after the initial increase, primarily because of the influence of tumor growth, which results in a decrease of the nutrient level. The damped effect is more pronounced in the high noise level simulation, where tumor growth is more substantial. 

\begin{figure}[H]
	\begin{tikzpicture}
	\definecolor{color0}{HTML}{4E79A7}
	\definecolor{color1}{HTML}{F28E2B}
	\definecolor{color2}{HTML}{E15759}
	\definecolor{color3}{HTML}{76B7B2}
	\definecolor{color4}{HTML}{59A14F}
	\definecolor{color5}{HTML}{EDC948}
	\definecolor{color6}{HTML}{B07AA1}
	\definecolor{color7}{HTML}{FF9DA7}
	\definecolor{color8}{HTML}{9C755F}
	\definecolor{color9}{HTML}{BAB0AC}
	\begin{axis}[
		height=.38\textheight,
        width=.57\textwidth,
		legend pos=north west,
		legend cell align={left},
        ymin=0.757, ymax=0.767,
        ytick ={0.755, 0.76, 0.765, 0.77},
		xmin=0.01, xmax=1,
		xtick={0.2,0.4,0.6,0.8,1.0},
		]
  		\addplot[ultra thick,color0,smooth,each nth point=2] table [x expr=0.01+\coordindex/99, y index=0]{Figures/meannut.txt};
  				\addlegendentry{mean}
  		\addplot[name path=us_top,ultra thick,color1,smooth,each nth point=2] table [x expr=0.01+\coordindex/99, y index=0]{Figures/meanplustdnut.txt};
  		\addplot[name path=us_down,ultra thick,color1,smooth,each nth point=2] table [x expr=0.01+\coordindex/99, y index=0]{Figures/meanminstdnut.txt};
		    \addplot[color1!50,fill opacity=0.5] fill between[of=us_top and us_down];
		    \addlegendentry{standard deviation}
	\end{axis}
\end{tikzpicture}	\begin{tikzpicture}
	\definecolor{color0}{HTML}{4E79A7}
	\definecolor{color1}{HTML}{F28E2B}
	\definecolor{color2}{HTML}{E15759}
	\definecolor{color3}{HTML}{76B7B2}
	\definecolor{color4}{HTML}{59A14F}
	\definecolor{color5}{HTML}{EDC948}
	\definecolor{color6}{HTML}{B07AA1}
	\definecolor{color7}{HTML}{FF9DA7}
	\definecolor{color8}{HTML}{9C755F}
	\definecolor{color9}{HTML}{BAB0AC}
	\begin{axis}[
		height=.38\textheight,
        width=.56\textwidth,
		legend pos=north west,
		legend cell align={left},
        ymin=0.757, ymax=0.767,
	ytick ={0.755, 0.76, 0.765, 0.77},
		xmin=0.01, xmax=1,
		xtick={0.2,0.4,0.6,0.8,1.0},
		yticklabels={},
		]
  		\addplot[ultra thick,color0,smooth,each nth point=2] table [x expr=0.01+\coordindex/99, y index=0]{Figures/meannut2.txt};
  				\addlegendentry{mean}
  		\addplot[name path=us_top,ultra thick,color1,smooth,each nth point=2] table [x expr=0.01+\coordindex/99, y index=0]{Figures/meanplustdnut2.txt};
  		\addplot[name path=us_down,ultra thick,color1,smooth,each nth point=2] table [x expr=0.01+\coordindex/99, y index=0]{Figures/meanminstdnut2.txt};
		    \addplot[color1!50,fill opacity=0.5] fill between[of=us_top and us_down];
		    \addlegendentry{standard deviation}
	\end{axis}
\end{tikzpicture}
	\caption{Considering 50 samples, the mean and standard deviation  of the nutrient volume $t \mapsto \int_D \sigma(t,x) \dx$ is shown for the noise intensities $\nu=2.5$ (left) and $\nu=0.5$ (right). \label{Fig:NutrientVolume}}
\end{figure}
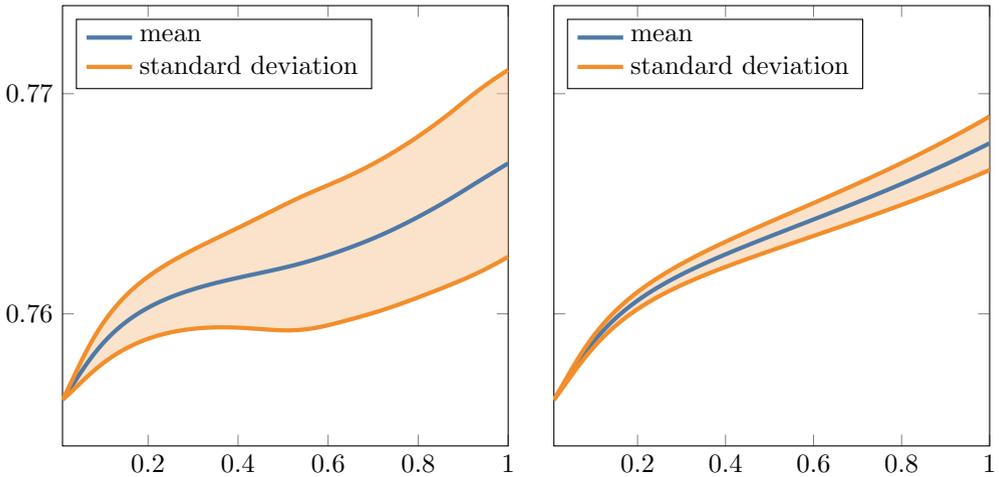

\noindent{\textbf{Test 2 (Tumor and nutrient evolution for different noise levels but same seed).}}
\cref{Fig:TumorSameSeed} presents a series of visual representations showcasing the dynamic evolution of a tumor over several time steps. What makes this analysis distinct is the introduction of four different noise levels $\nu \in \{0,0.5,1,2.5\}$. Crucially, to maintain predictability and consistency in the randomness, the same seed for generating random effects was employed in all cases. This ensures that the tumor's movement remains similar across different noise levels. In the most-left row of \cref{Fig:TumorSameSeed}, we provide a reference point by simulating the tumor evolution in a deterministic setting, where no noise is introduced. In this case, the tumor maintains a perfectly round, circular shape throughout all time steps. This behavior aligns with existing knowledge and expectations, e.g., see \cite{fritz2023tumor}. The other rows of \cref{Fig:TumorSameSeed} represent the introduction of stochasticity through low, moderate, and high noise levels. As the noise levels increase, the tumor's shape becomes increasingly wobbly. The level of irregularity is more pronounced in the high noise level case, to the extent that it no longer resembles its initial circular shape. The primary observation here is the significant impact of stochasticity on tumor shape evolution. In the presence of noise, the tumor's behavior departs from the deterministic circular shape and takes on an irregular form. This is particularly evident at higher noise levels, where the tumor's movement becomes less predictable and more influenced by random effects.

\begin{figure}[H] \begin{center}
		\begin{tabular}{cM{.19\textwidth}M{.19\textwidth}M{.19\textwidth}M{.19\textwidth}}
			&
			\quad $\nu=0.0$&\quad $\nu=0.5$&\quad $\nu=1.0$&\quad $\nu=2.5$ \\
				\!\!\!\!\!\!$t=0.4$\!\!\!\!\!&
			\includegraphics[width=.22\textwidth]{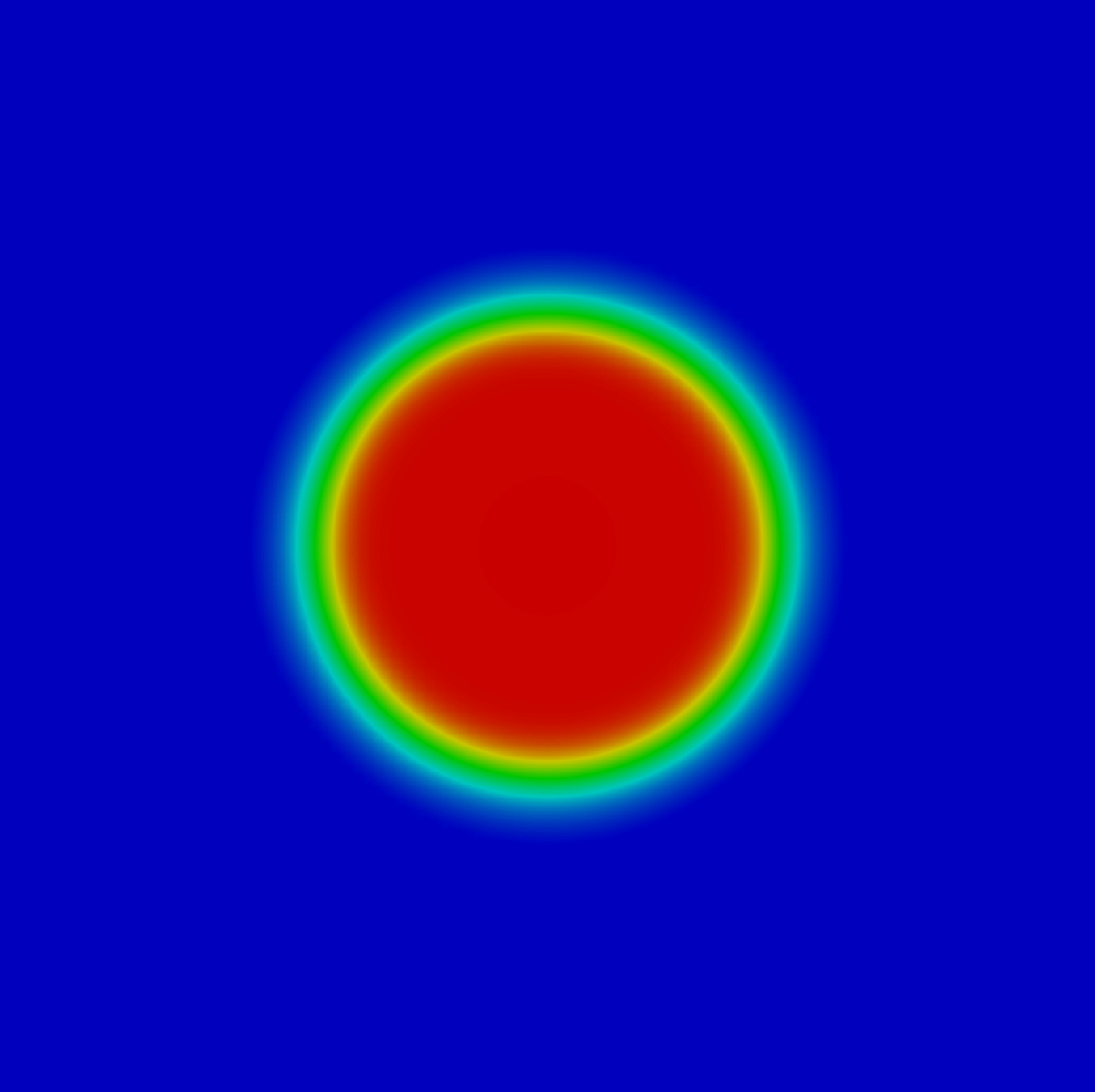}&
			\includegraphics[width=.22\textwidth]{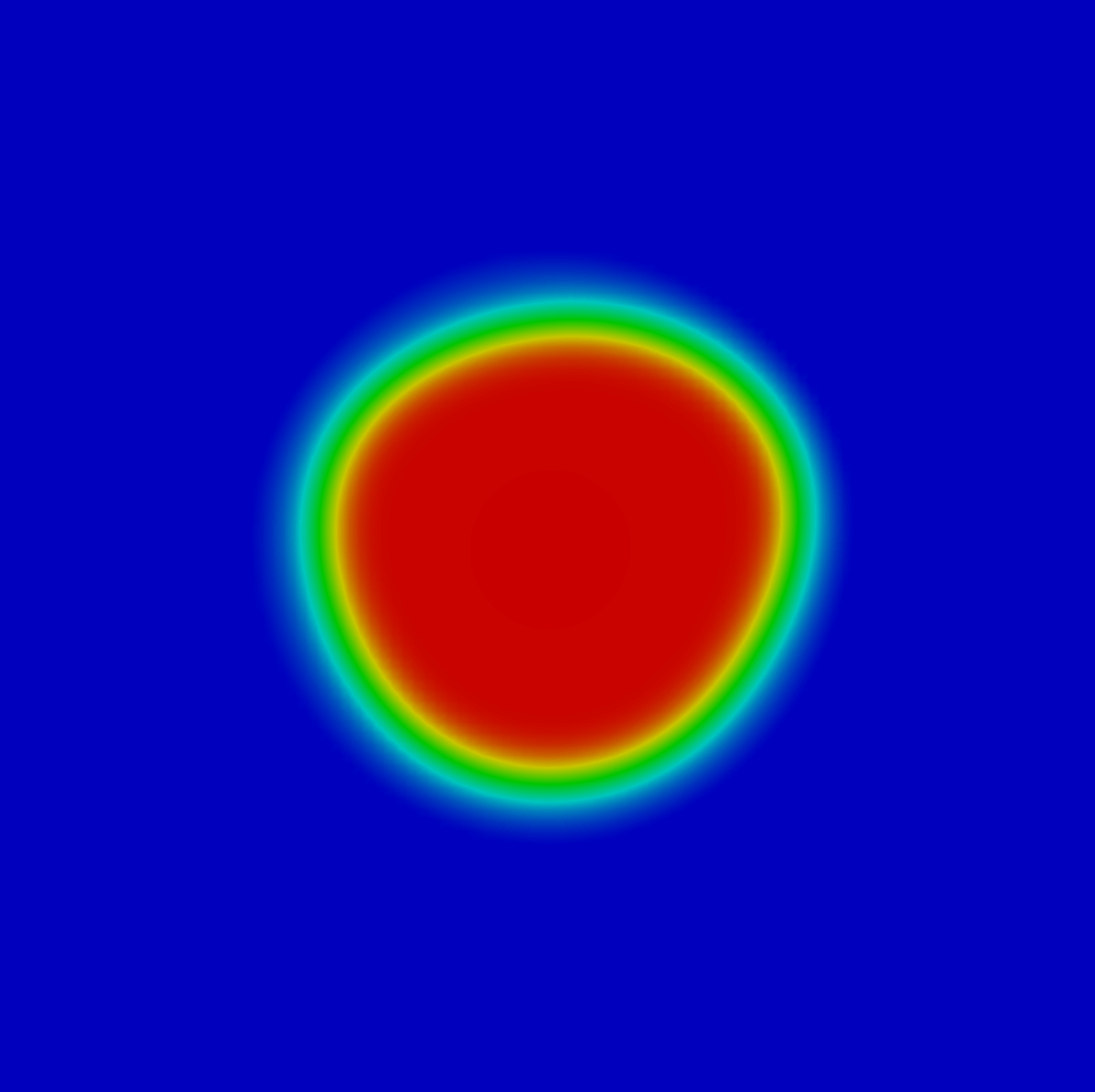}&
			\includegraphics[width=.22\textwidth]{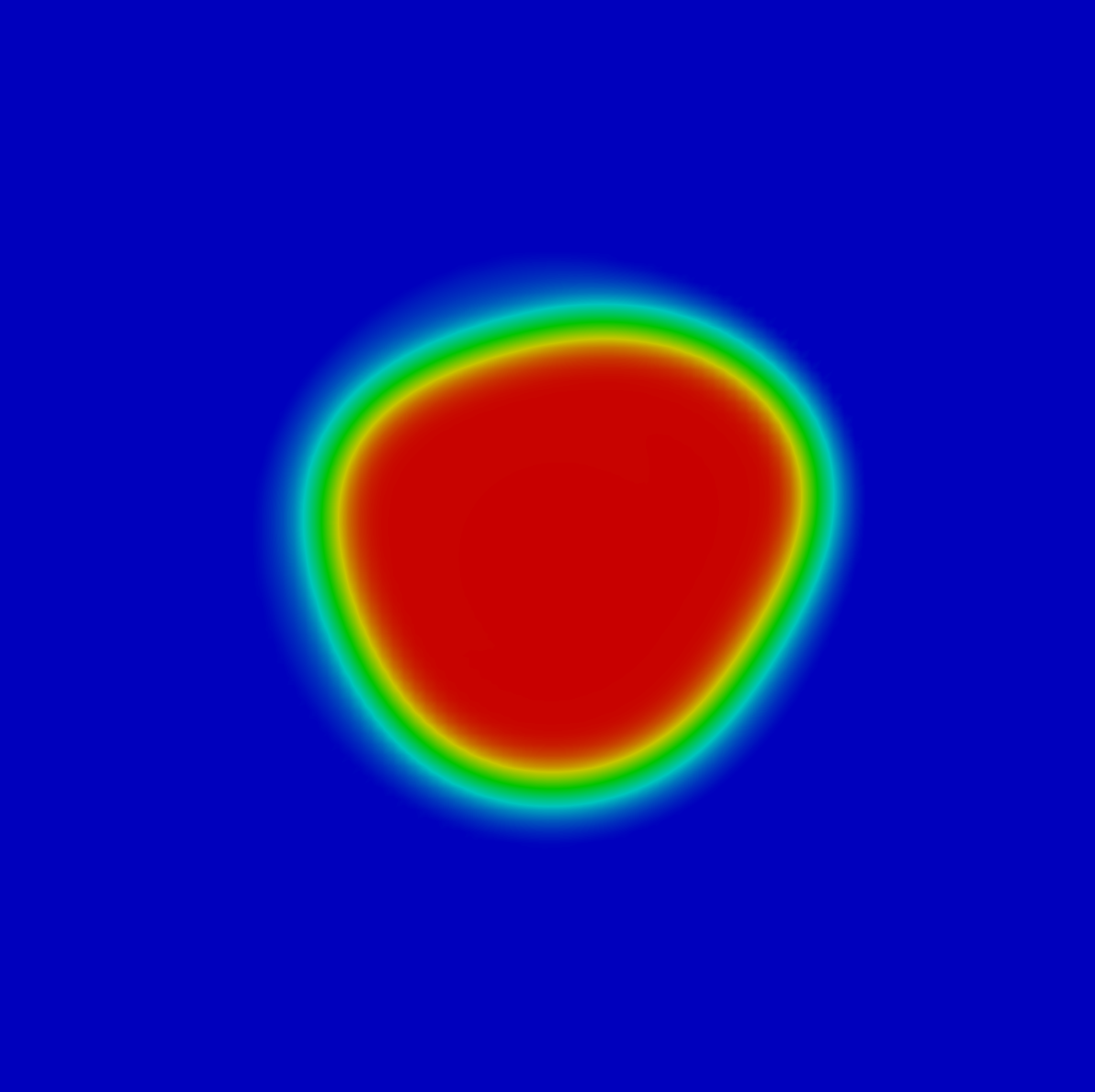}&
			\includegraphics[width=.22\textwidth]{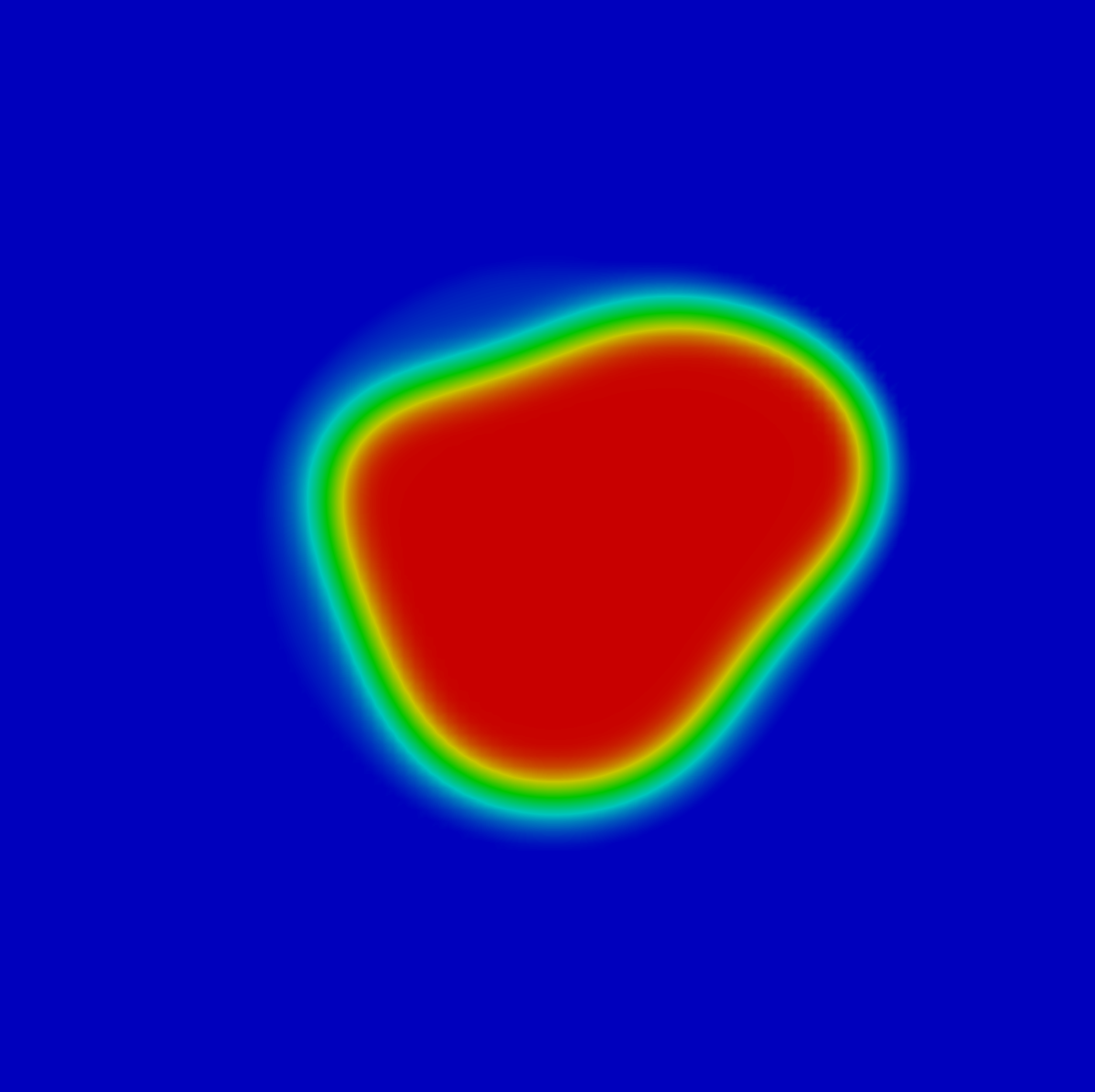}\\[-.1cm]
				\!\!\!\!\!\!$t=1.0$\!\!\!\!\!&
			\includegraphics[width=.22\textwidth]{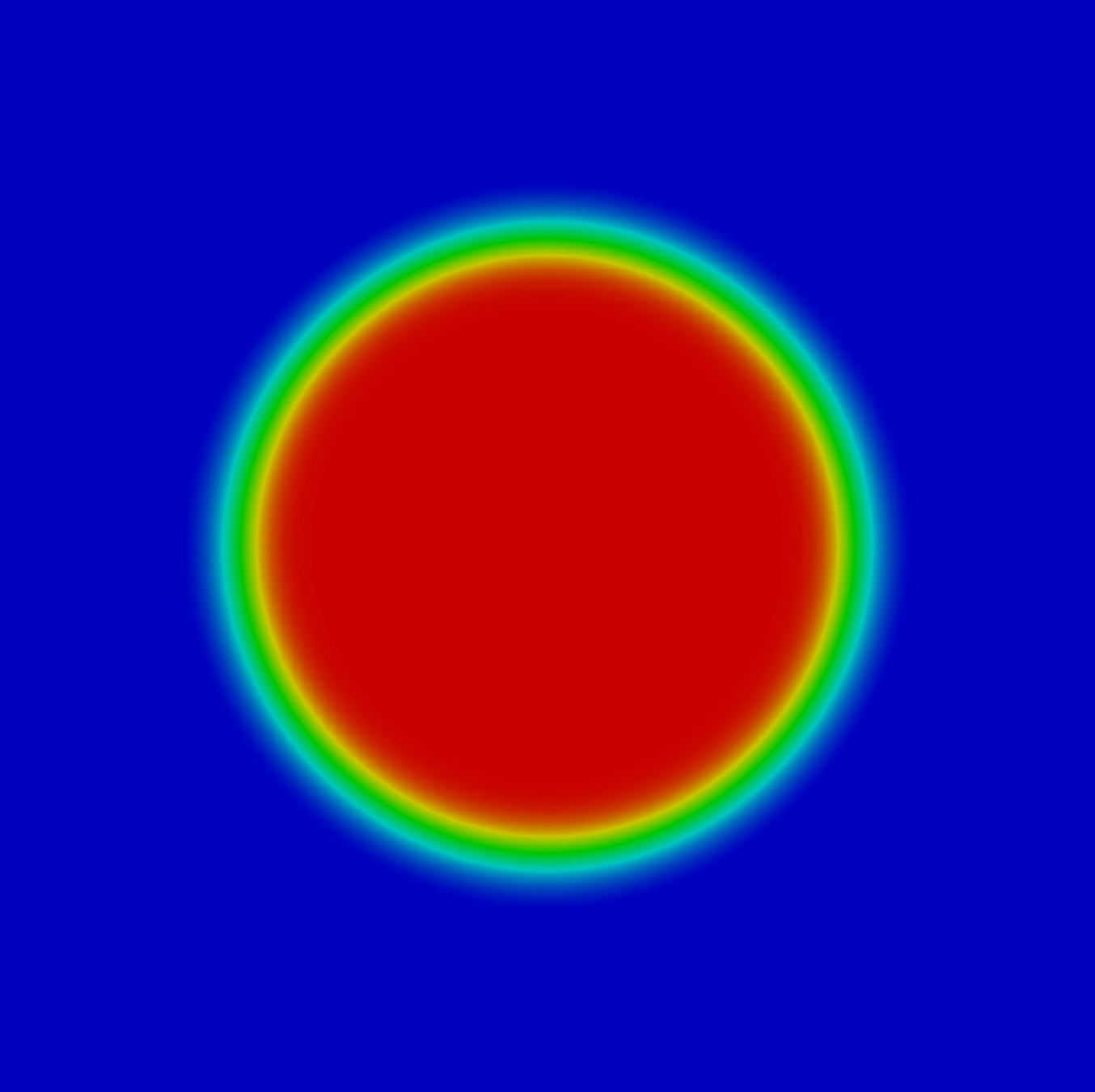}&
			\includegraphics[width=.22\textwidth]{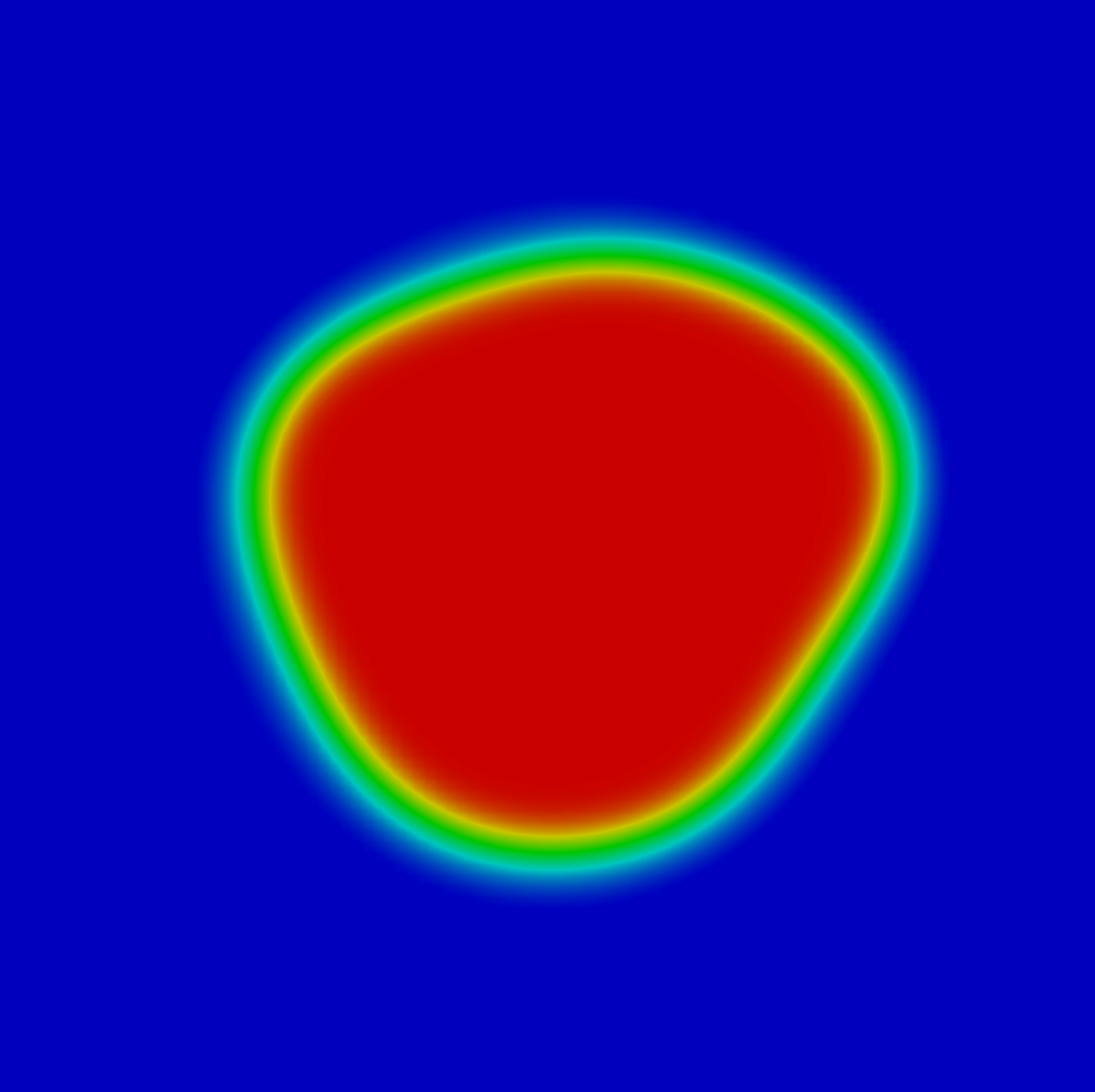}&
			\includegraphics[width=.22\textwidth]{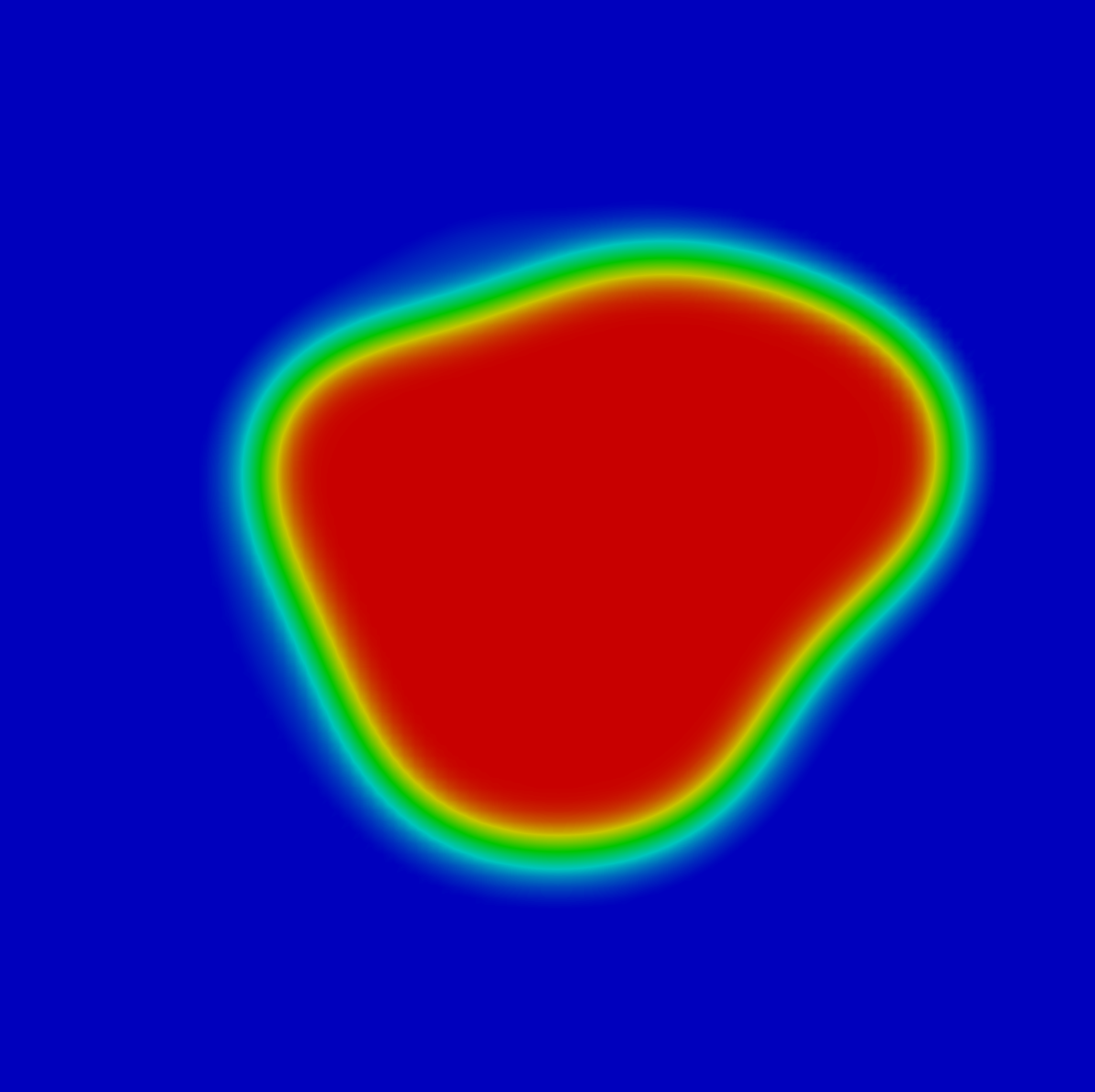}&
			\includegraphics[width=.22\textwidth]{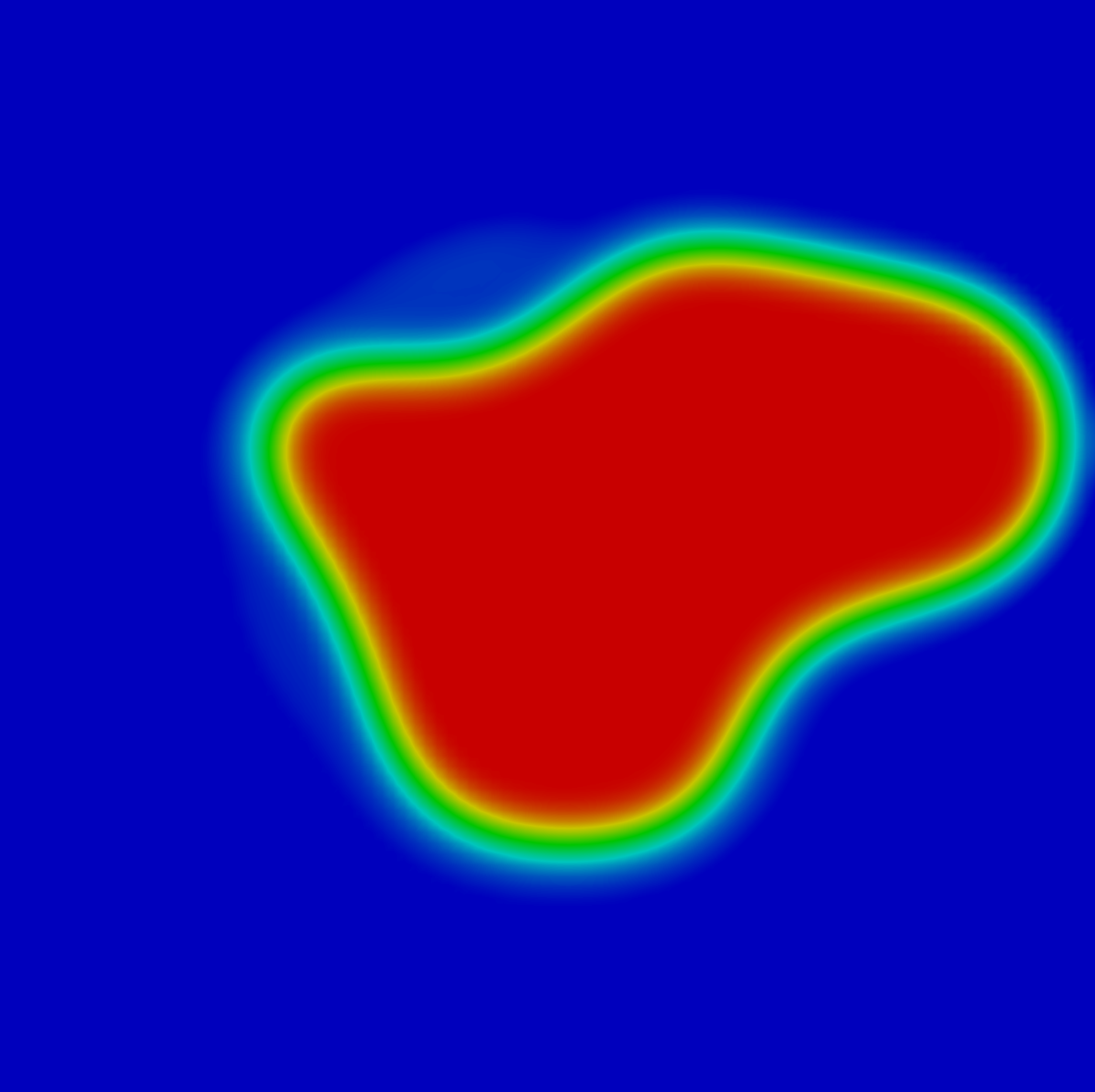}
		\end{tabular} \\
		\hspace{1.5cm}\includegraphics[width=.6\textwidth]{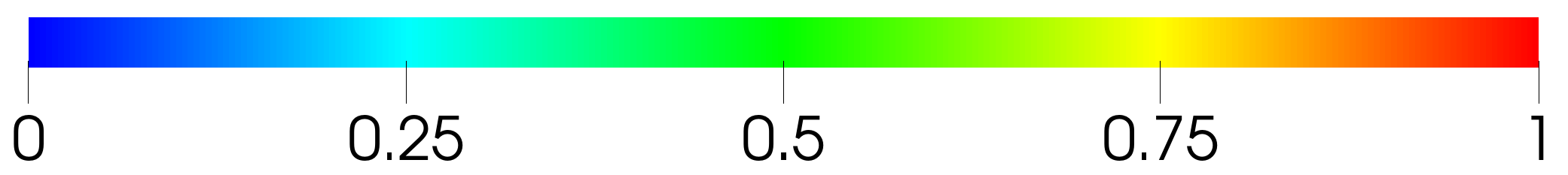}
		\caption{\label{Fig:TumorSameSeed}Evolution of the tumor volume fraction $\phi(t,x)$ over time in the domain $D$ for increasing noise intensities but for a fixed seed.}
	\end{center} 
\end{figure}

In \cref{Fig:TumorContourSame}, we provide a visual representation of the distinct tumor shapes at four different time steps, with a particular focus on the interface of the tumor volume fraction. The contour plots in this figure delineate the contour line where the tumor volume fraction equals 50\%, i.e. $\phi(t,x)=0.5$, effectively marking the boundary between tumor and non-tumor regions. At each of the four highlighted time steps, the contour plot vividly illustrates the evolving shape of the tumor. The most significant observation in \cref{Fig:TumorContourSame} is the variability in tumor shape over time. As the tumor evolves, the contour lines reveal the changes in the tumor's spatial distribution. The variability is particularly pronounced as we transition from the initial state to later time steps, reflecting the influence of noise and stochasticity on the tumor's growth patterns.

\begin{figure}[H] \begin{center}
		\!\!\!\!\!\!\!\!\!\!\!\!\!\! \begin{tabular}{M{.21\textwidth}M{.21\textwidth}M{.21\textwidth}M{.21\textwidth}}
			\quad $t=0.1$ & \quad $t=0.3$& \quad $t=0.5$& \quad $t=0.7$ \\[-.3cm]	\includegraphics[width=.26\textwidth]{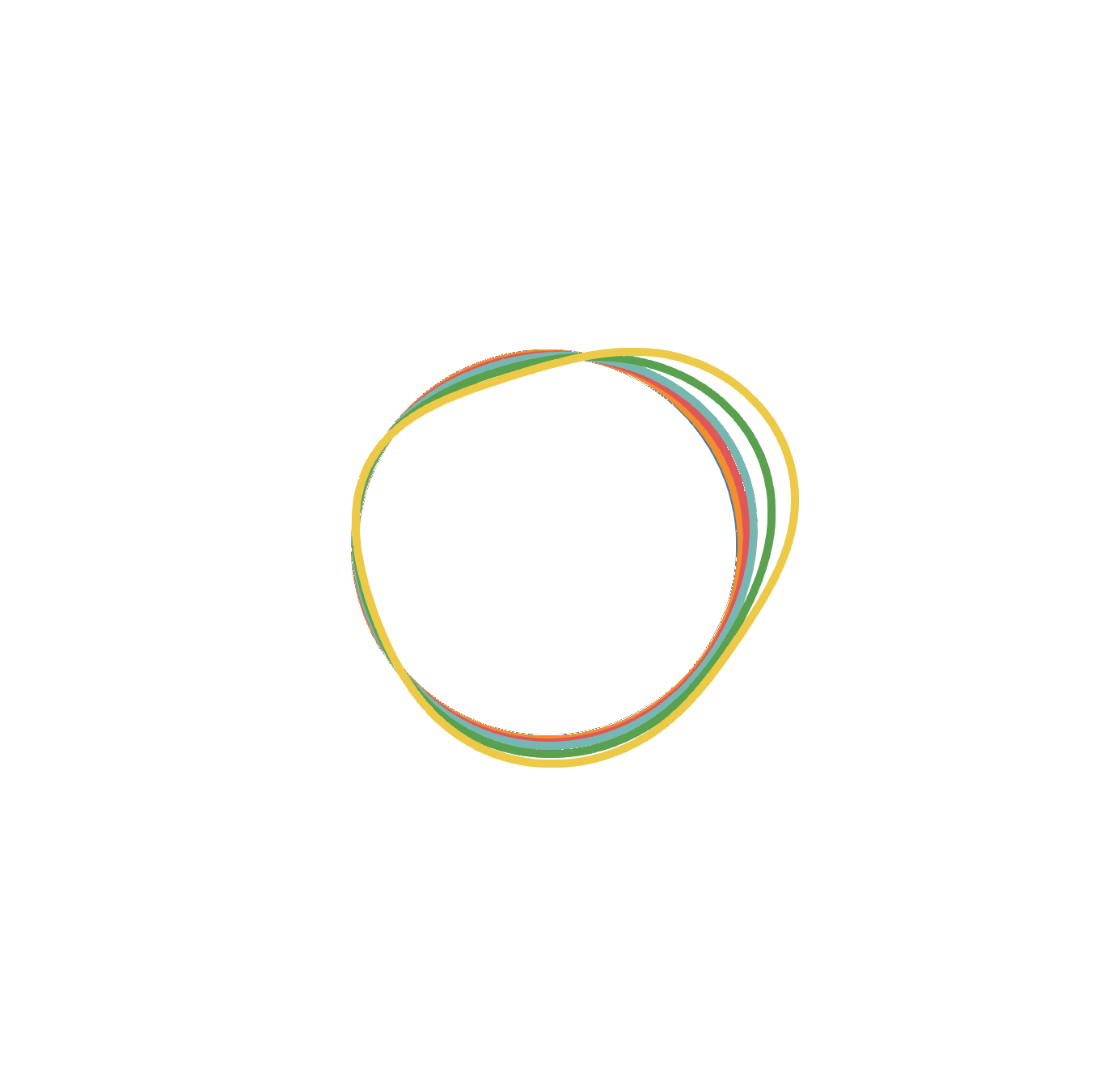}&
			\includegraphics[width=.26\textwidth]{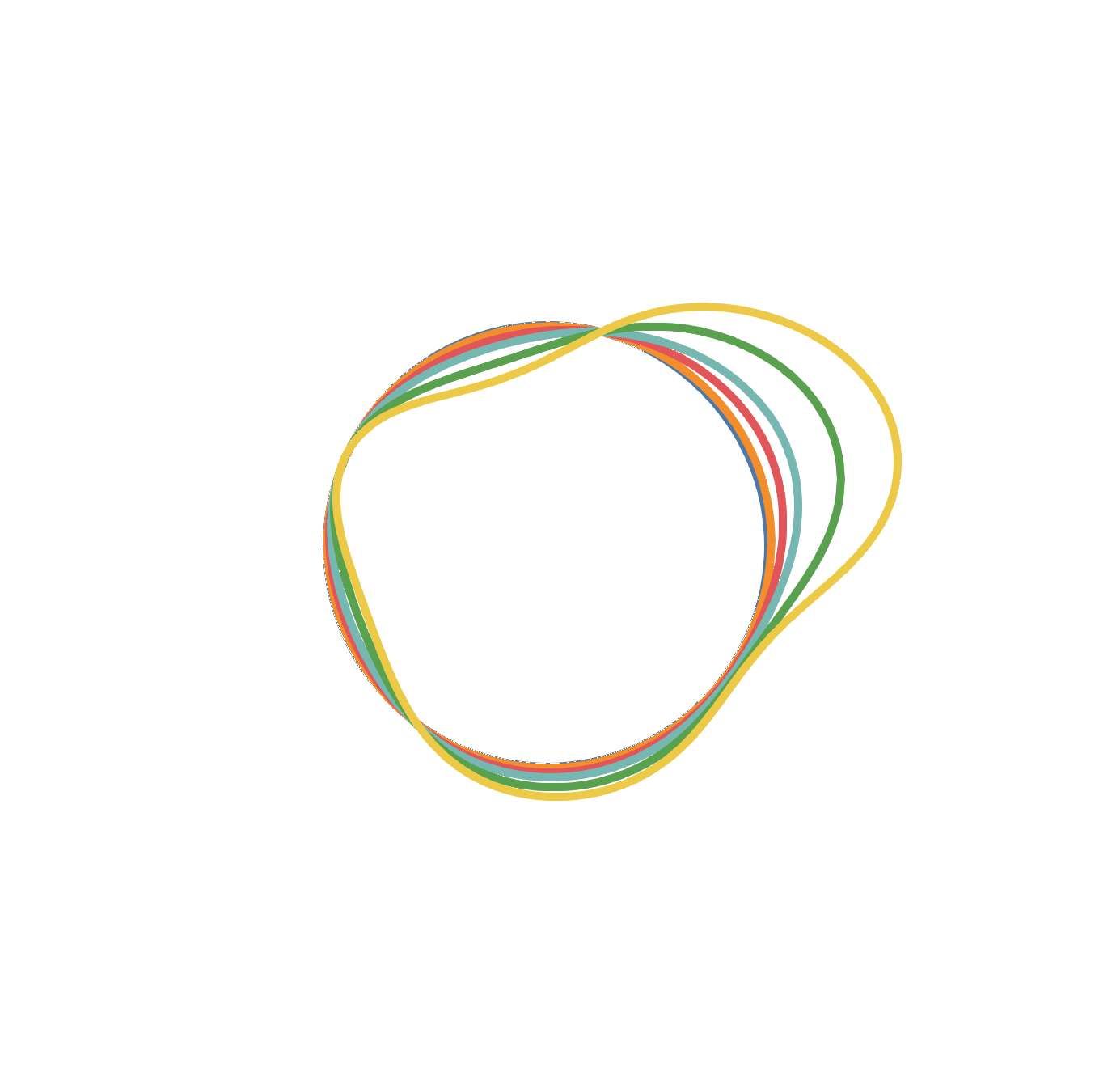}&
			\includegraphics[width=.26\textwidth]{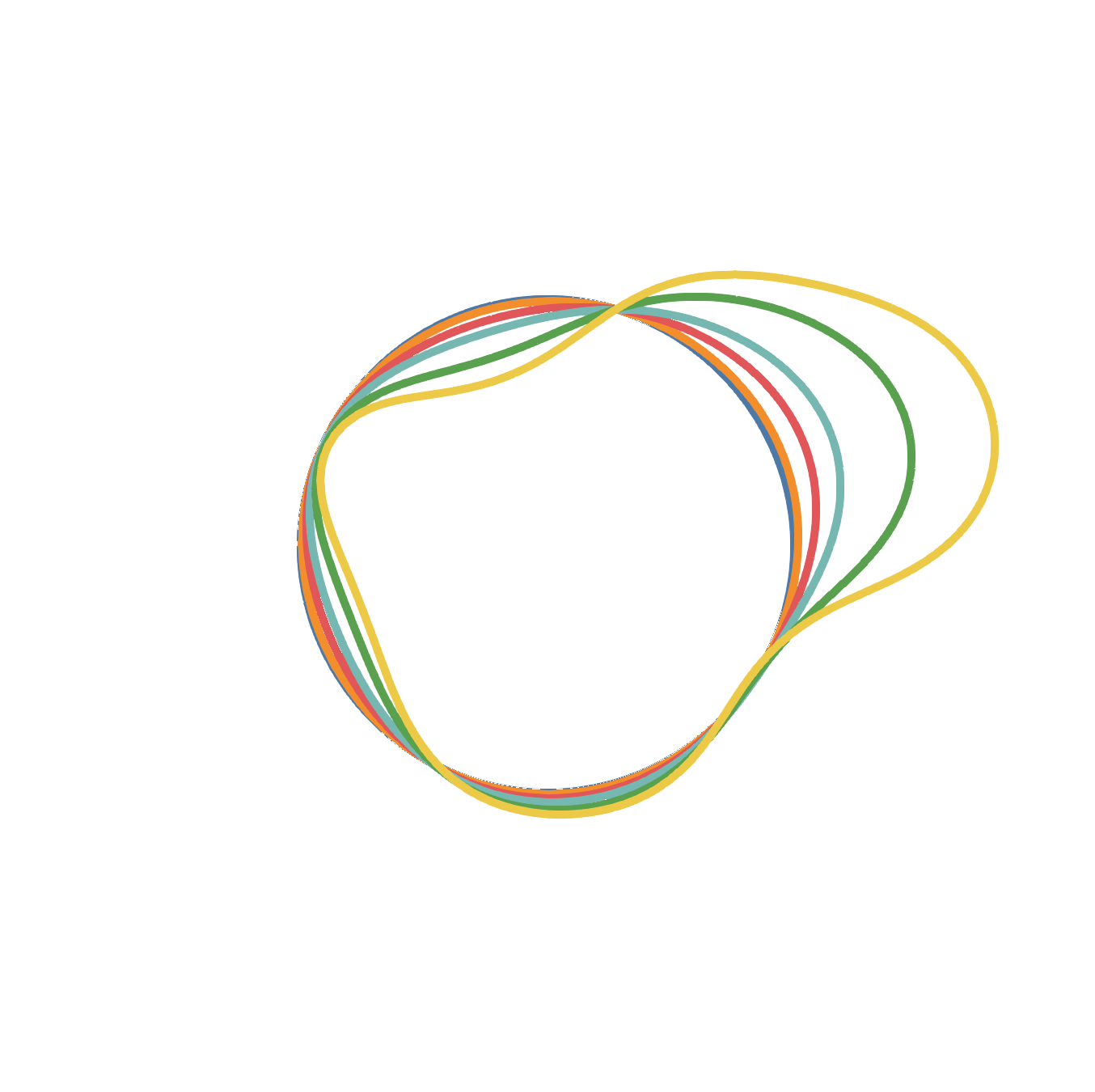}&
			\includegraphics[width=.26\textwidth]{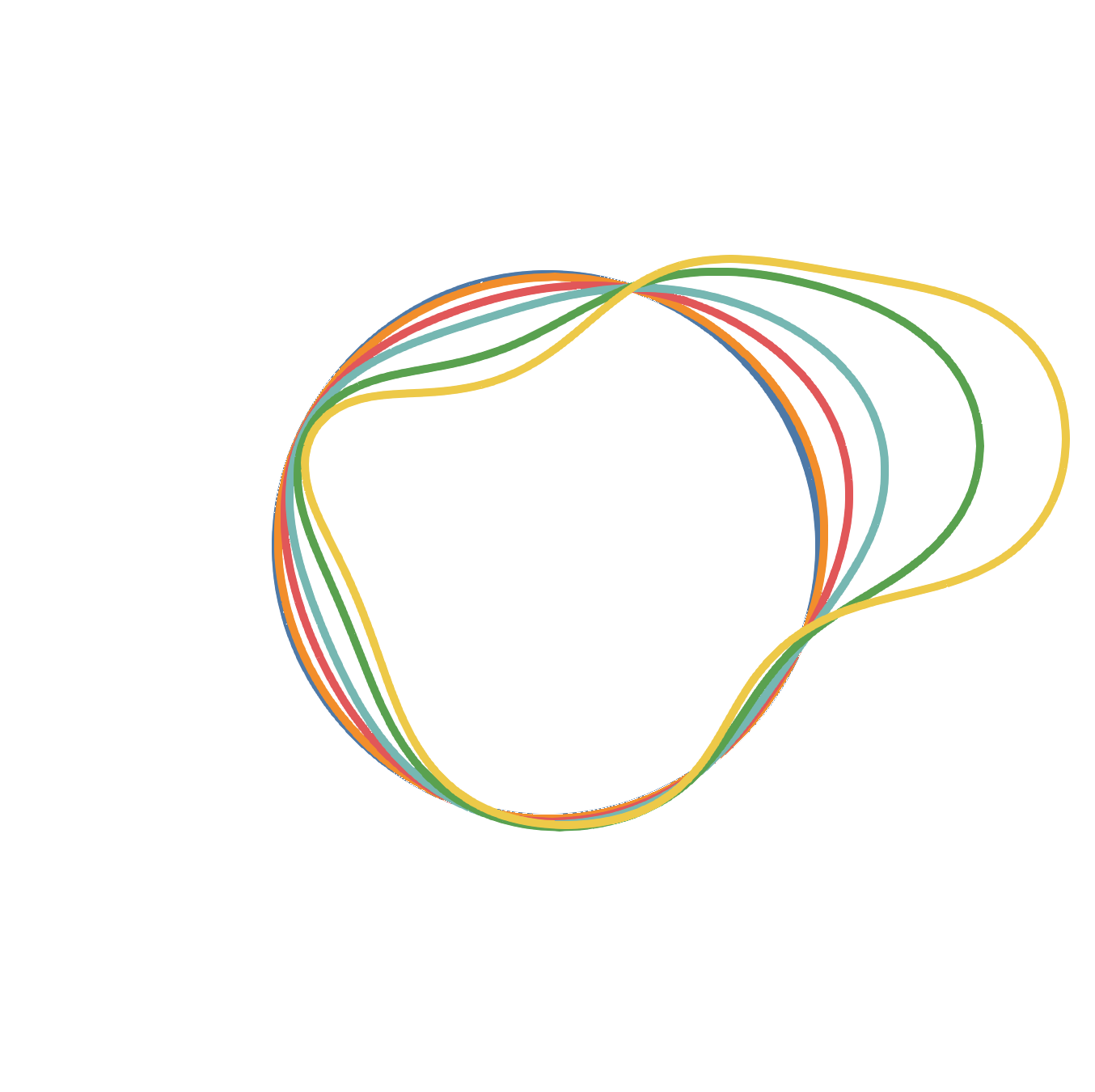} \\[-.5cm]
		\end{tabular} 
		\caption{\label{Fig:TumorContourSame}Contour plots at four distinct time steps visualize the evolving shape of the tumor -- contours represent the boundary where 50\% of the domain contains tumor cells, showcasing the dynamic progression of the tumor's shape for different noise levels (with fixed seed).}
	\end{center} 
\end{figure}

\cref{Fig:Nutrients} provides a series of visual representations showcasing the temporal evolution of nutrient distribution at the four different noise levels $\nu \in \{0,0.5,1,2.5\}$. The nutrient distribution is shown at two time steps, offering insights into the spatial changes in nutrient availability within the domain. We observe that the nutrient concentration is highest at the boundary of the domain. This concentration gradient is a result of the chosen boundary condition, which provides a continuous source of nutrients at the domain's boundary. As time progresses, we witness the diffusion of nutrients from the boundary toward the center of the domain. Nutrient levels decrease as we move closer to the domain's center, reflecting the expected behavior of diffusion. Notably, the presence of the tumor has a significant impact on the nutrient distribution. As the tumor grows and absorbs nutrients, its interface becomes evident within the nutrient distribution. The nutrient levels are notably reduced in the vicinity of the tumor, indicating the tumor's influence on nutrient availability.

\begin{figure}[H] \begin{center}
		\begin{tabular}{cM{.19\textwidth}M{.19\textwidth}M{.19\textwidth}M{.19\textwidth}}
			&
			\quad $\nu=0.0$&\quad $\nu=0.5$&\quad $\nu=1.0$&\quad $\nu=2.5$ \\
			\!\!\!\!\!\!$t=0.4$\!\!\!\!\!&
			\includegraphics[width=.22\textwidth]{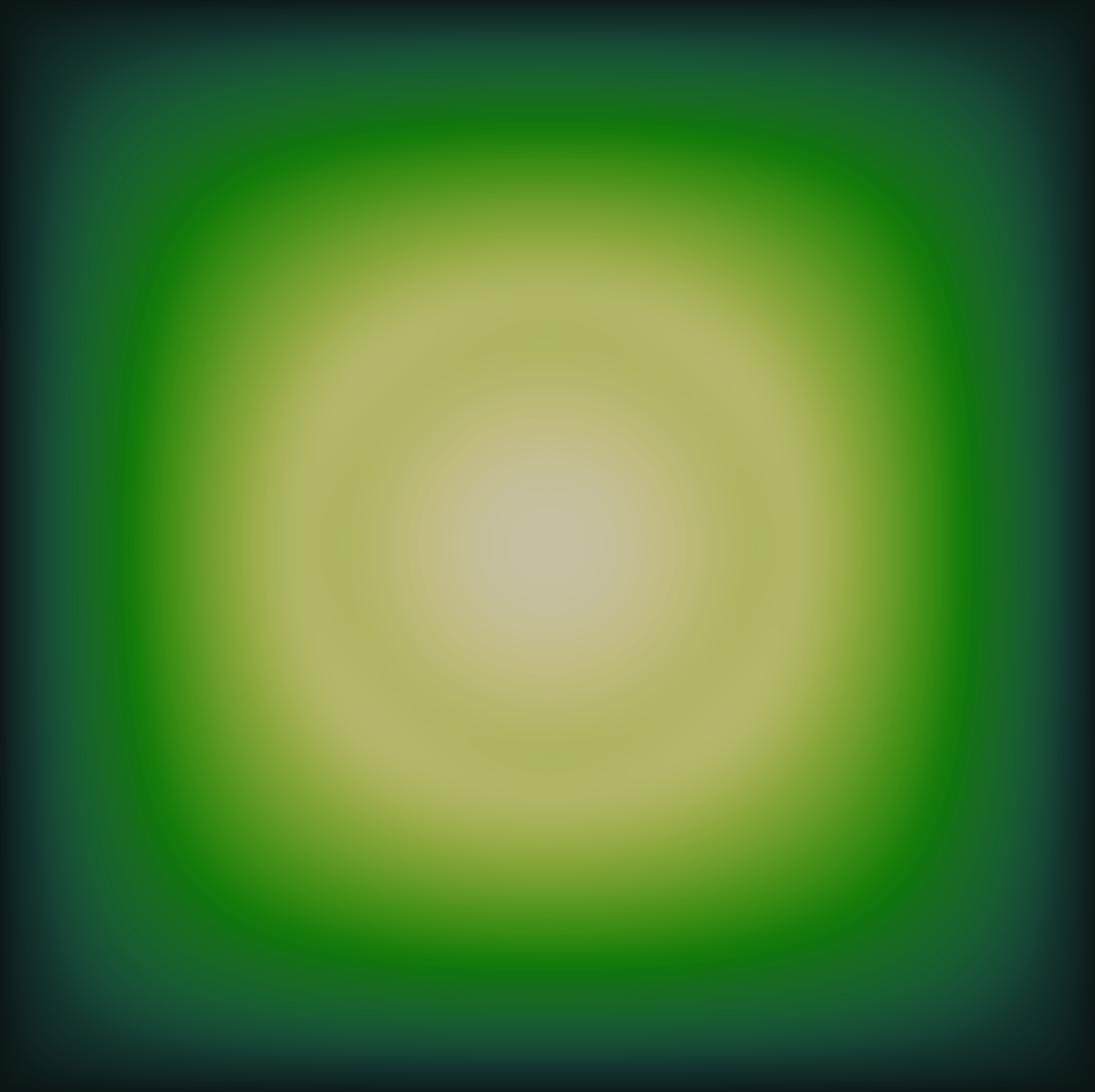}&
			\includegraphics[width=.22\textwidth]{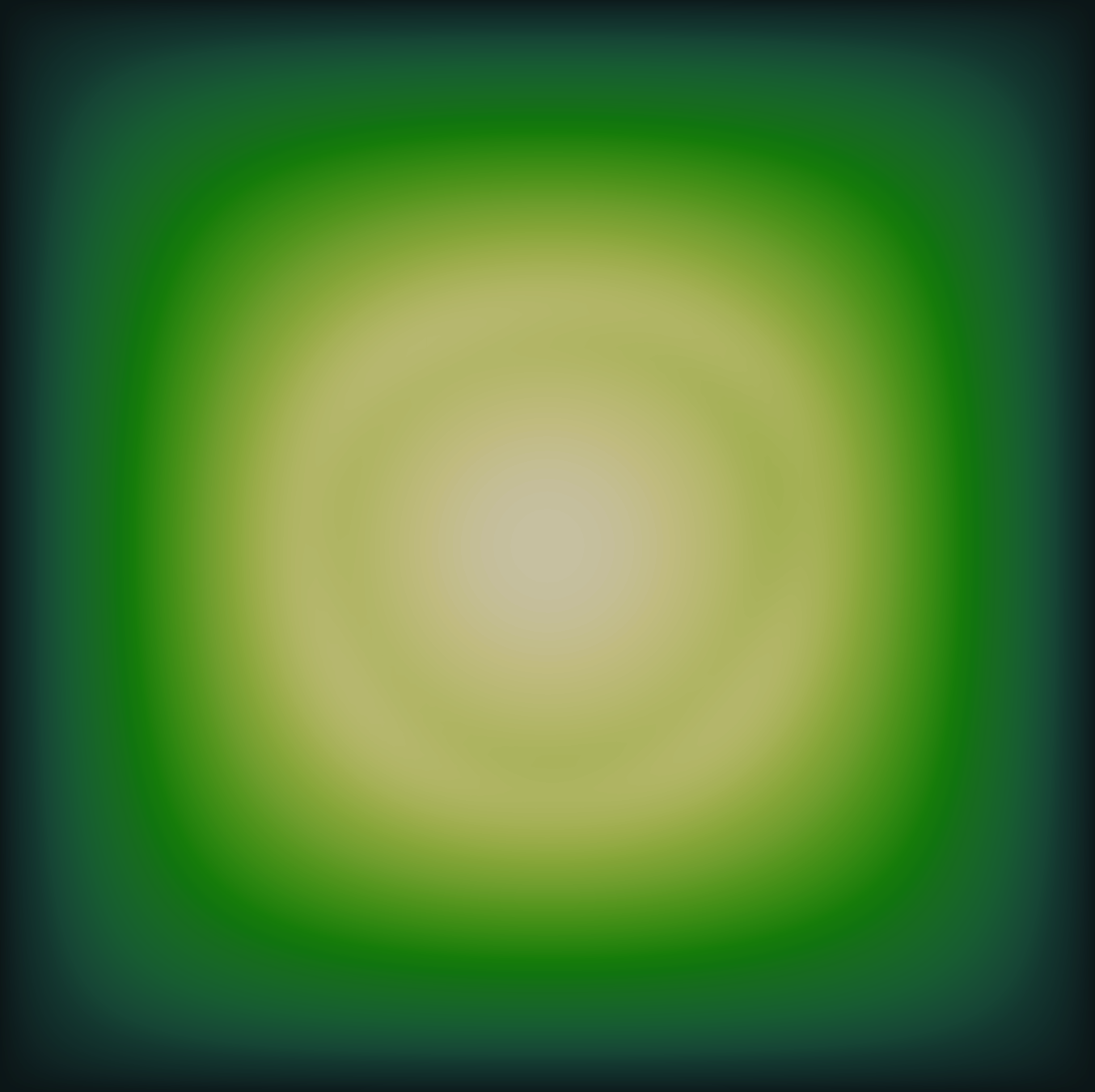}&
			\includegraphics[width=.22\textwidth]{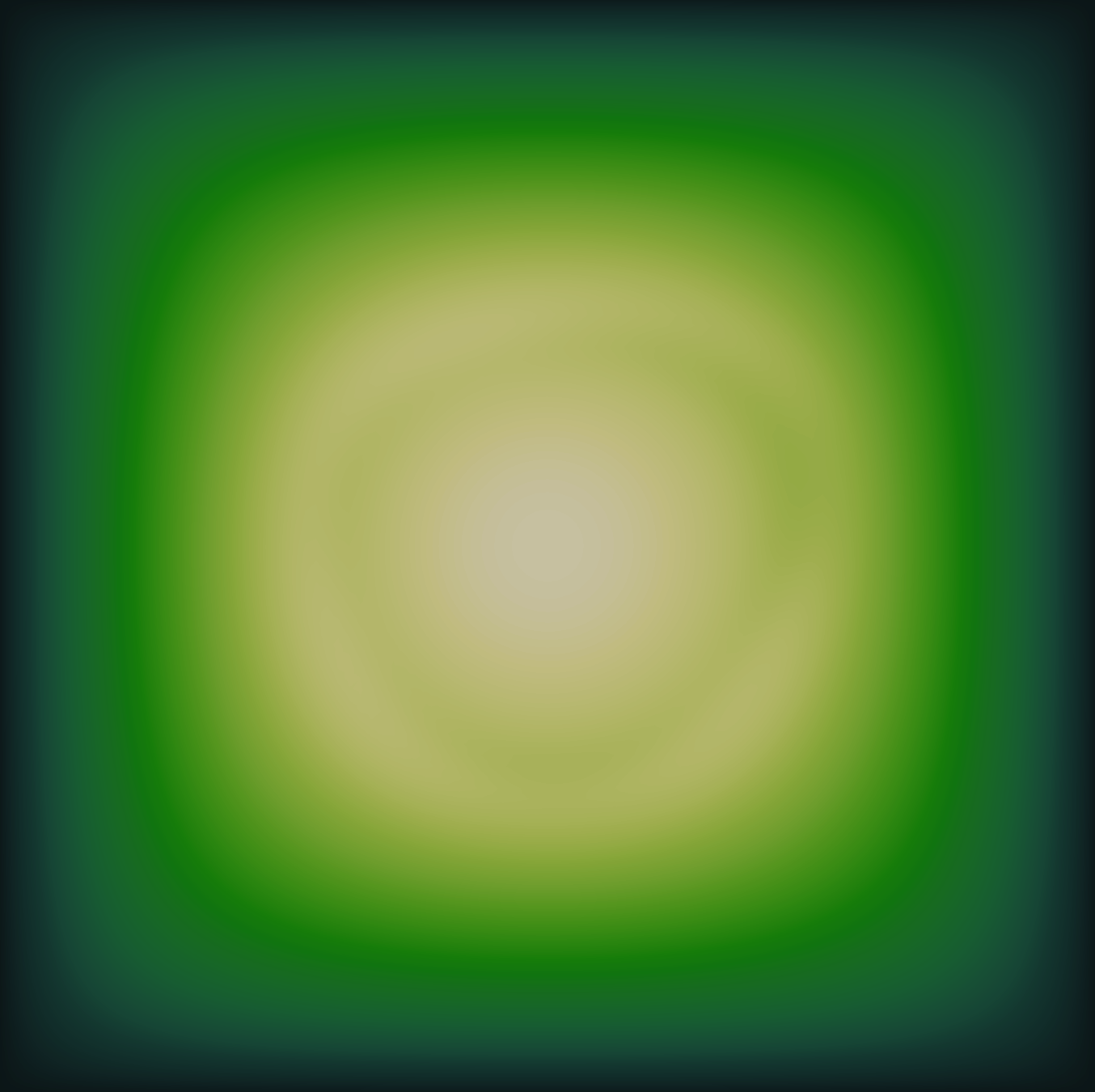}&
			\includegraphics[width=.22\textwidth]{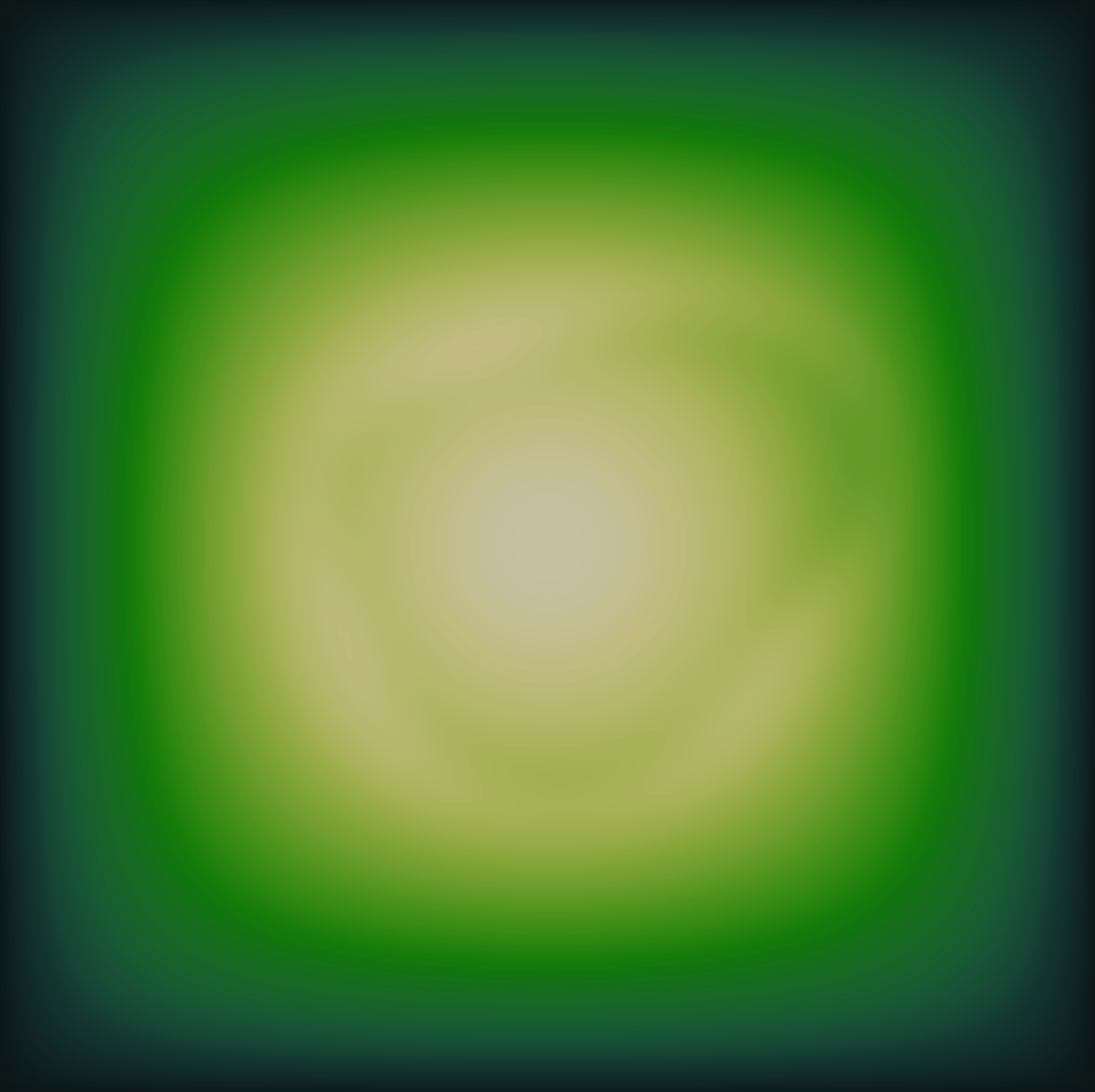}\\[-.1cm]
			\!\!\!\!\!\!$t=1.0$\!\!\!\!\!&
			\includegraphics[width=.22\textwidth]{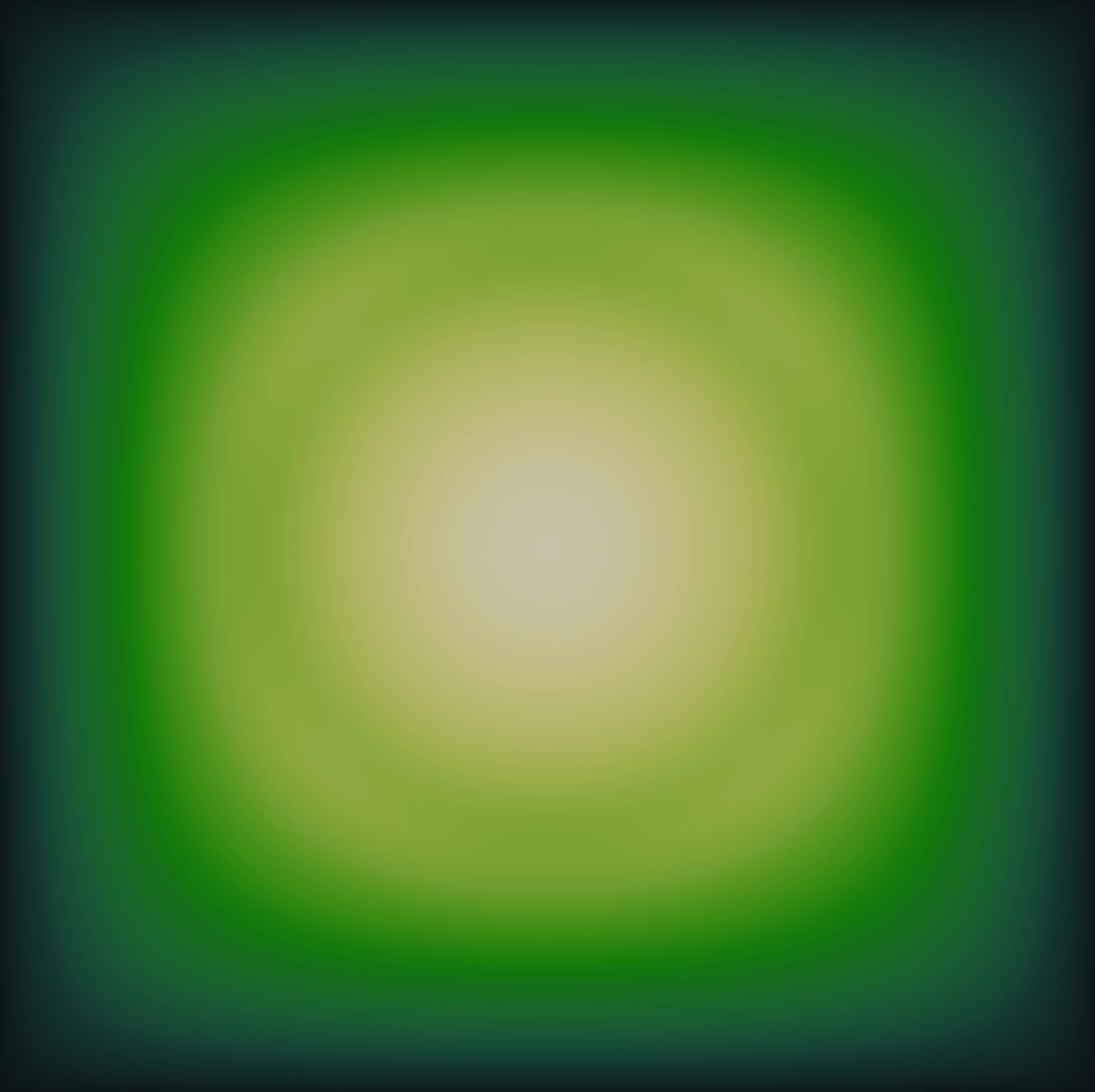}&
			\includegraphics[width=.22\textwidth]{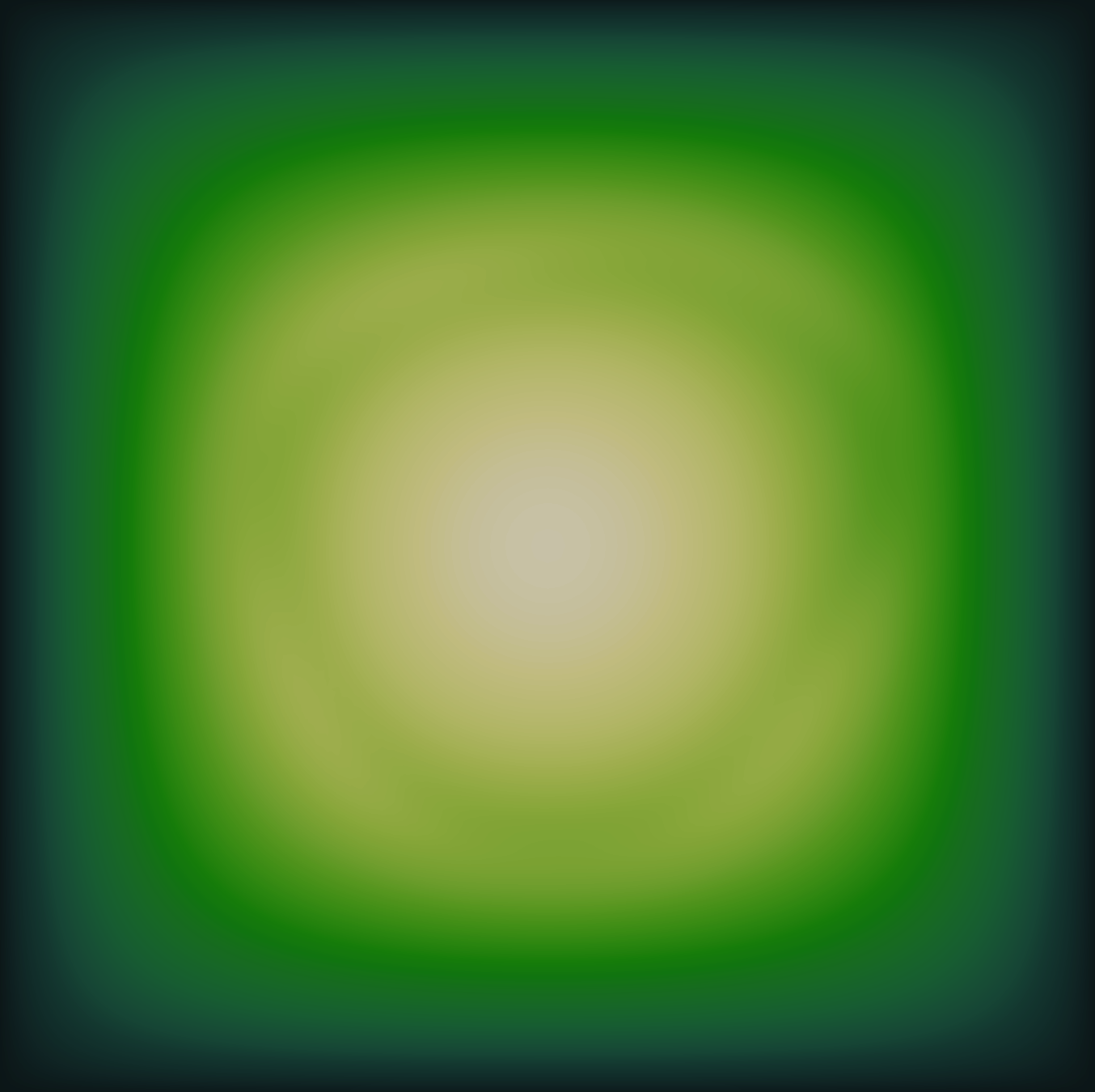}&
			\includegraphics[width=.22\textwidth]{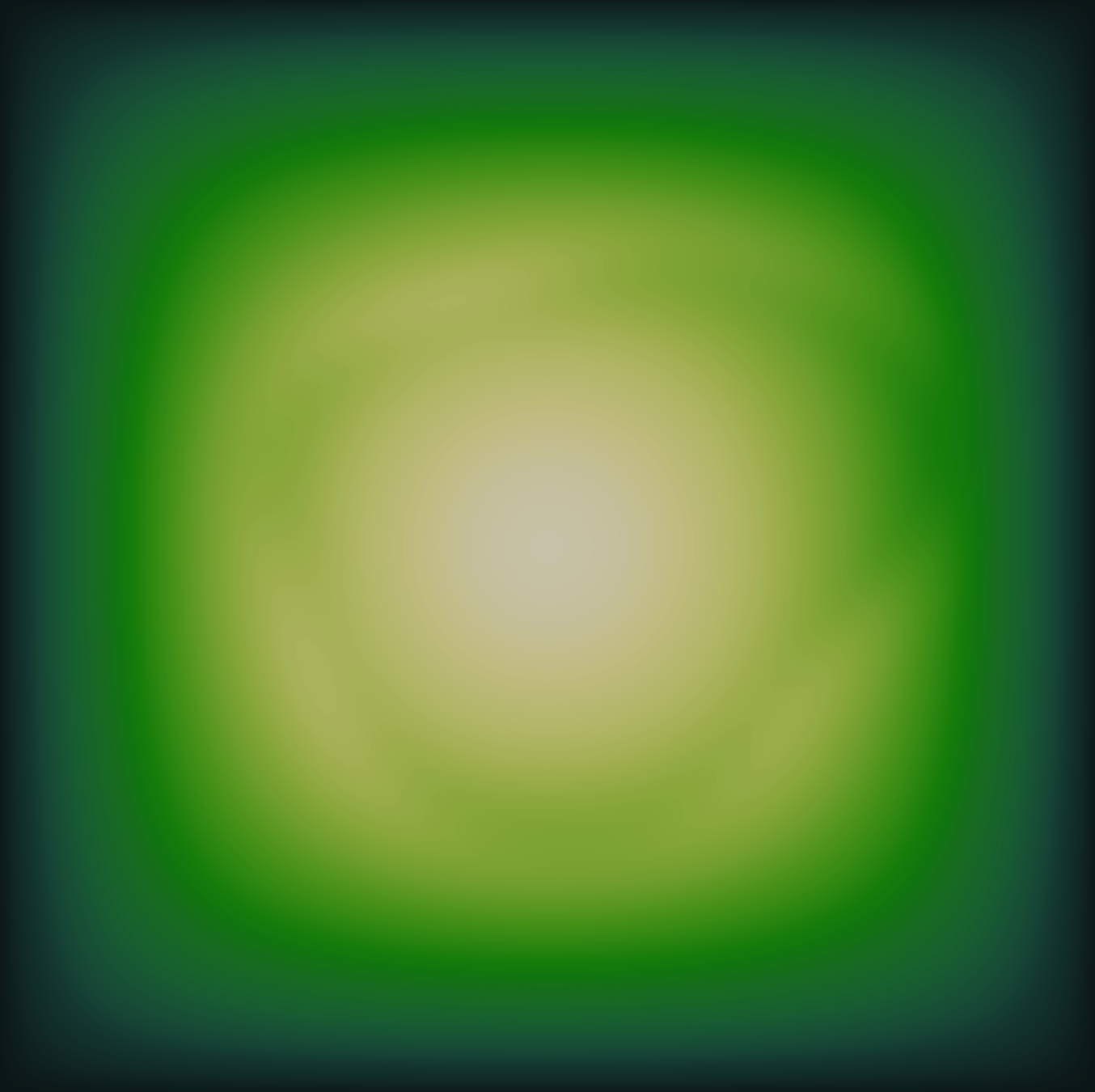}&
			\includegraphics[width=.22\textwidth]{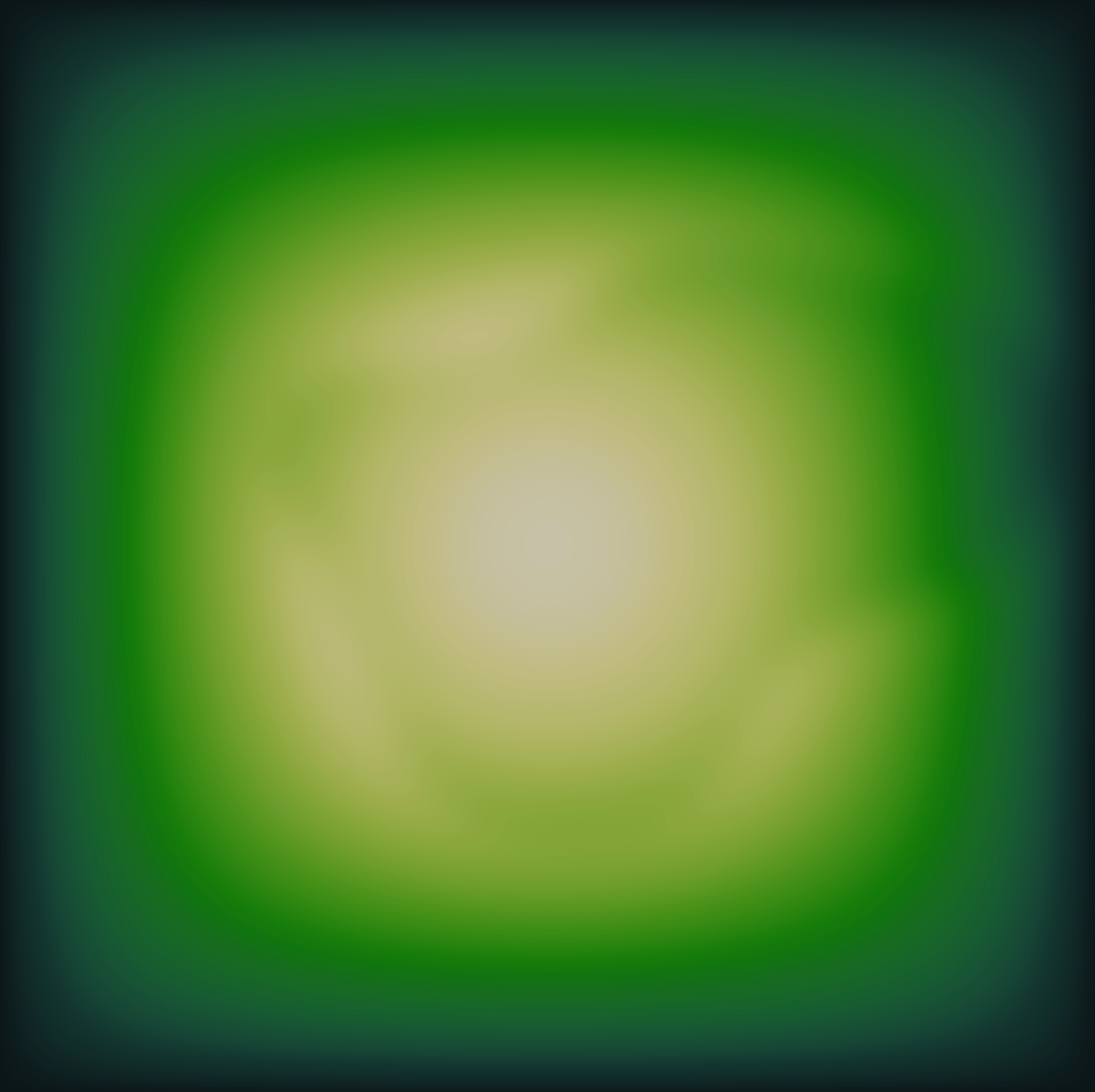}
		\end{tabular} \\
		\hspace{1.5cm}\includegraphics[width=.6\textwidth]{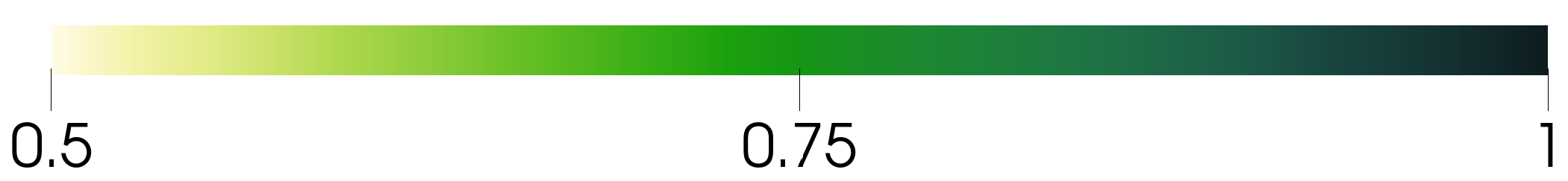}
		\caption{\label{Fig:Nutrients}Temporal evolution of nutrient distribution $\sigma(t,x)$  at the noise levels $\nu \in \{0,0.5,1,2.5\}$ (using a fixed seed).}
	\end{center} 
\end{figure}

\noindent{\textbf{Test 3 (Tumor evolution for different seeds).}}
In \cref{Fig:TumorDifferentSeedHighNoise}, we explore the tumor's evolution under conditions of high noise levels while considering different random seeds. Each simulation is truly random, and the figure showcases the results of four distinct samples at various time steps. At first glance, the figure highlights the significant variability in tumor shape. The choice of different random seeds leads to entirely distinct tumor shapes, even at earlier time steps. This highlights the inherent randomness and unpredictability introduced by high noise levels, shaping the tumor's growth patterns in unique ways for each simulation. Notably, the diversity in tumor shapes is evident even at earlier times, suggesting that the impact of randomness becomes apparent from the onset of the simulations. This underscores the role of high noise levels in influencing tumor behavior and shape diversity.

\begin{figure}[H] \begin{center}
		\begin{tabular}{cM{.19\textwidth}M{.19\textwidth}M{.19\textwidth}M{.19\textwidth}}
			& \quad seed 1 & \quad seed 2& \quad seed 3& \quad seed 4 \\
			\!\!\!\!\!\!$t=0.4$\!\!\!\!\!&
			\includegraphics[width=.22\textwidth]{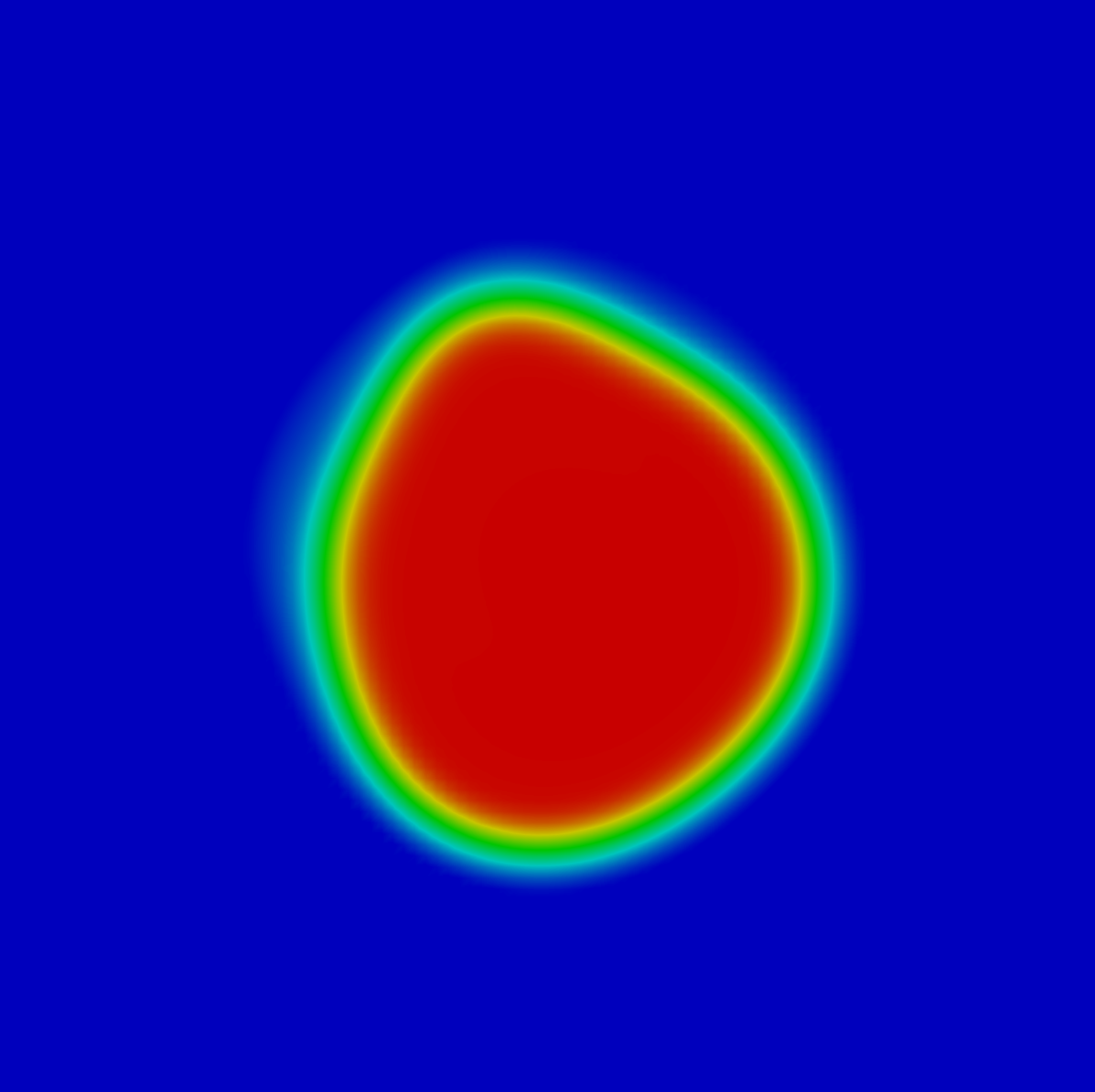}&
			\includegraphics[width=.22\textwidth]{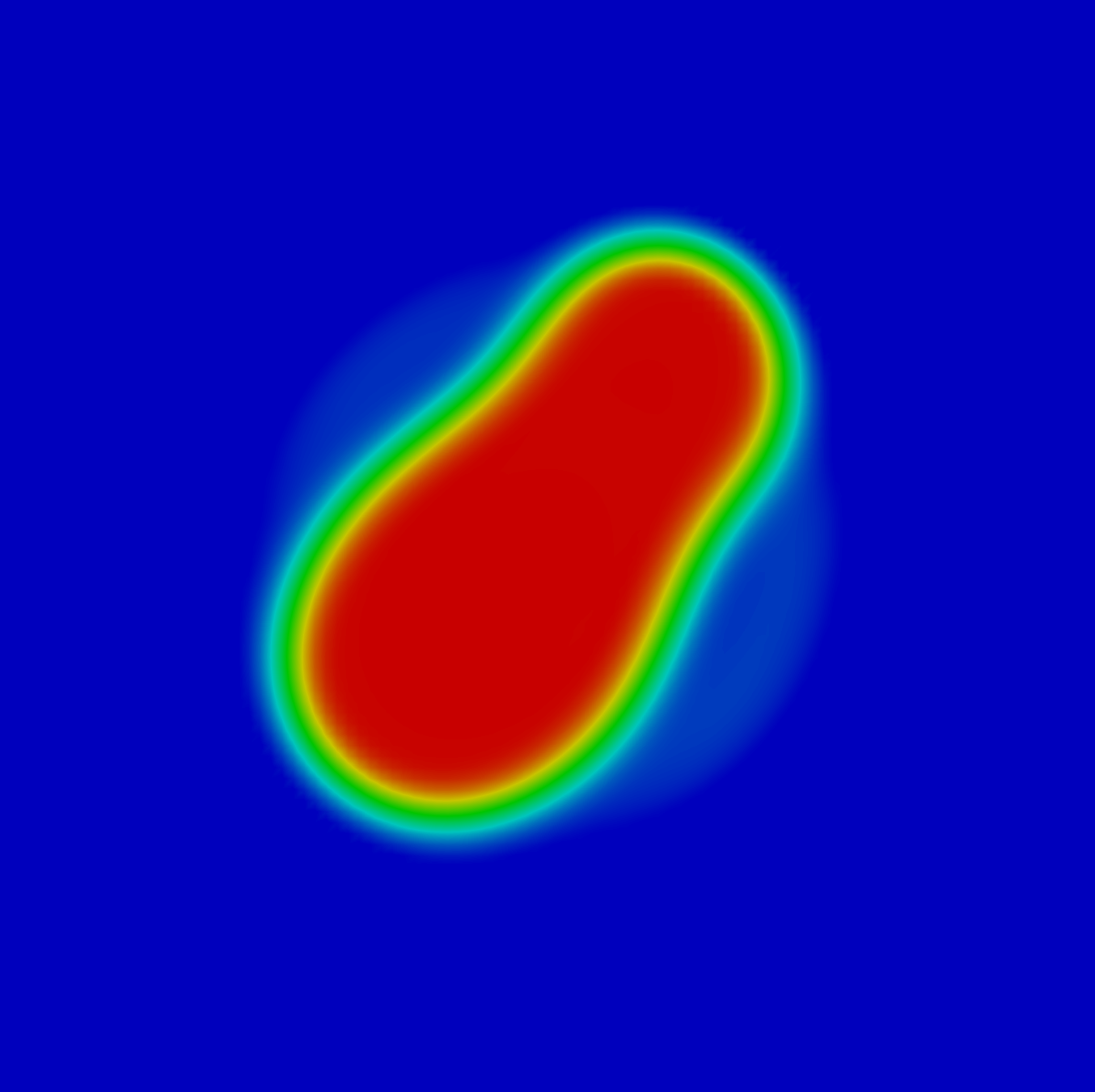}&
			\includegraphics[width=.22\textwidth]{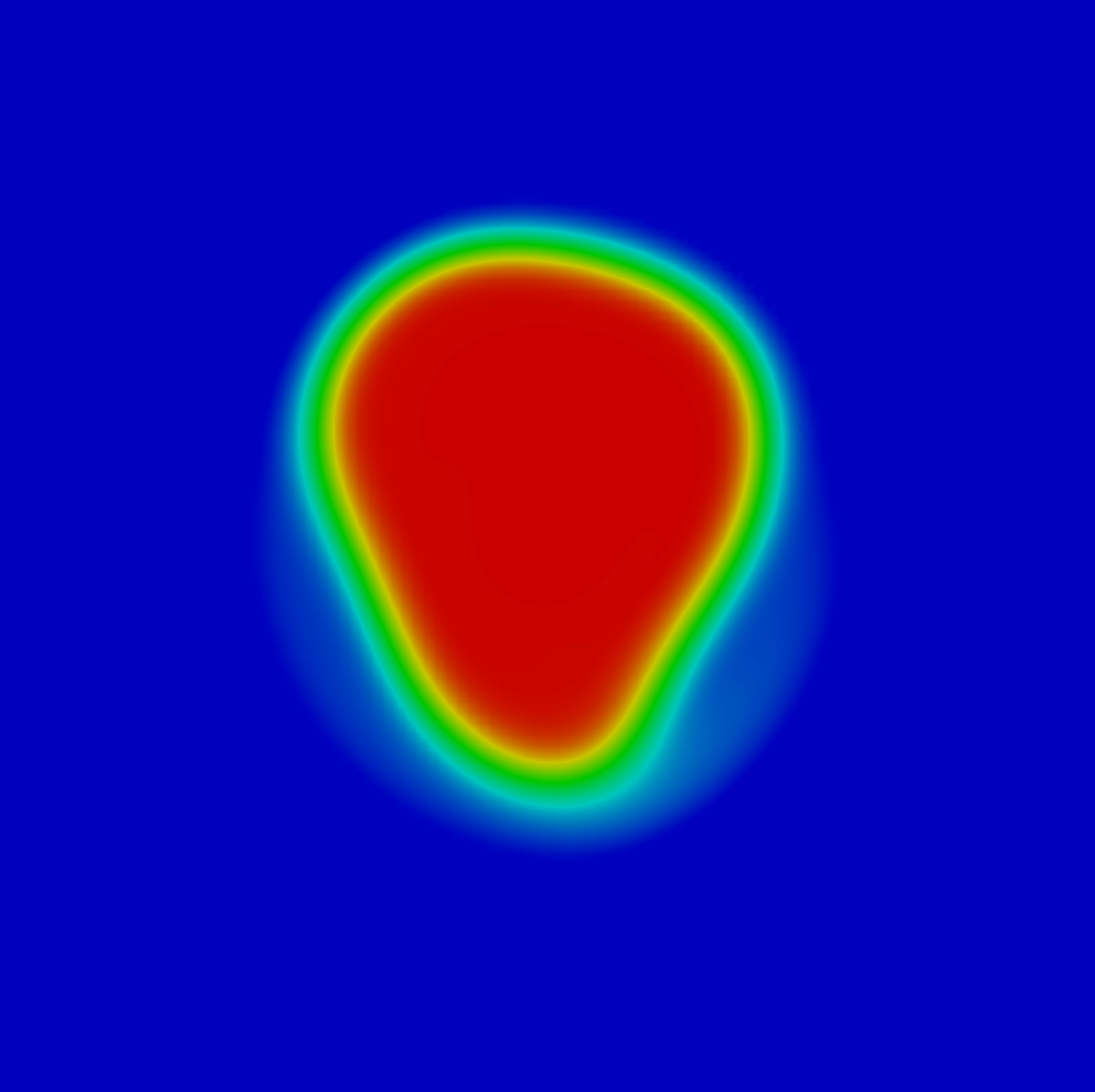}&
			\includegraphics[width=.22\textwidth]{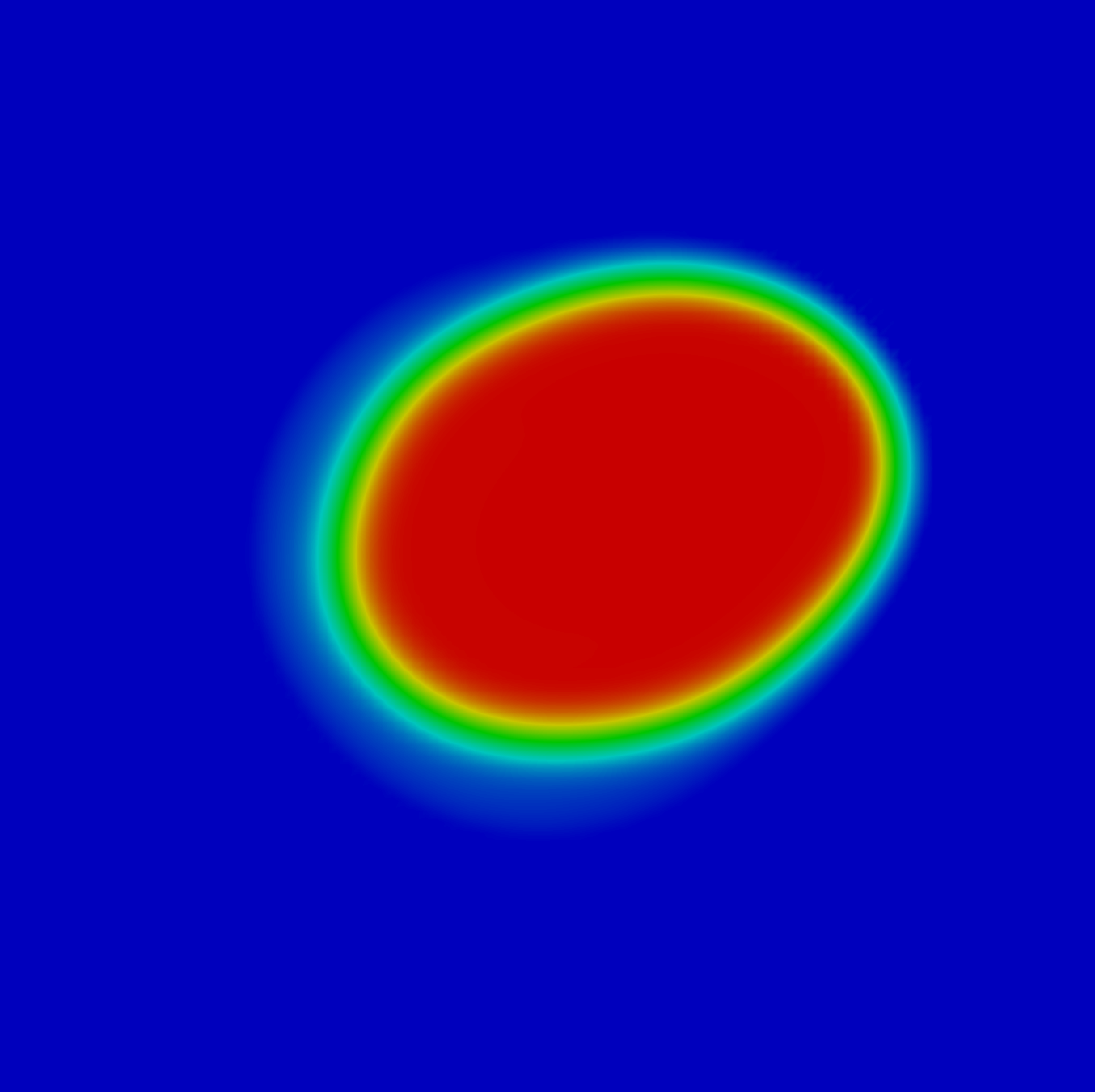}\\[-.1cm]
			\!\!\!\!\!\!$t=1.0$\!\!\!\!\!&
			\includegraphics[width=.22\textwidth]{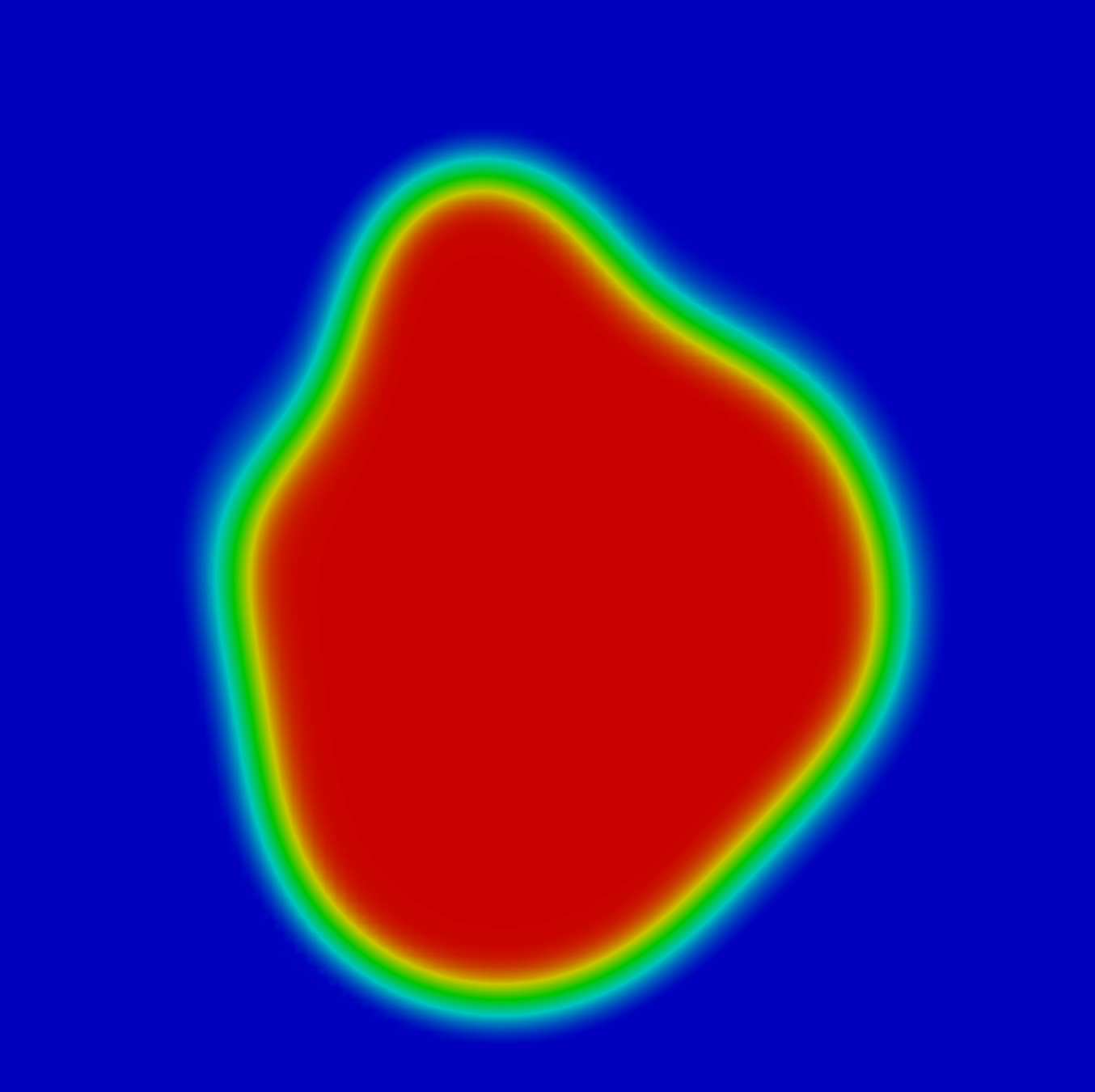}&
			\includegraphics[width=.22\textwidth]{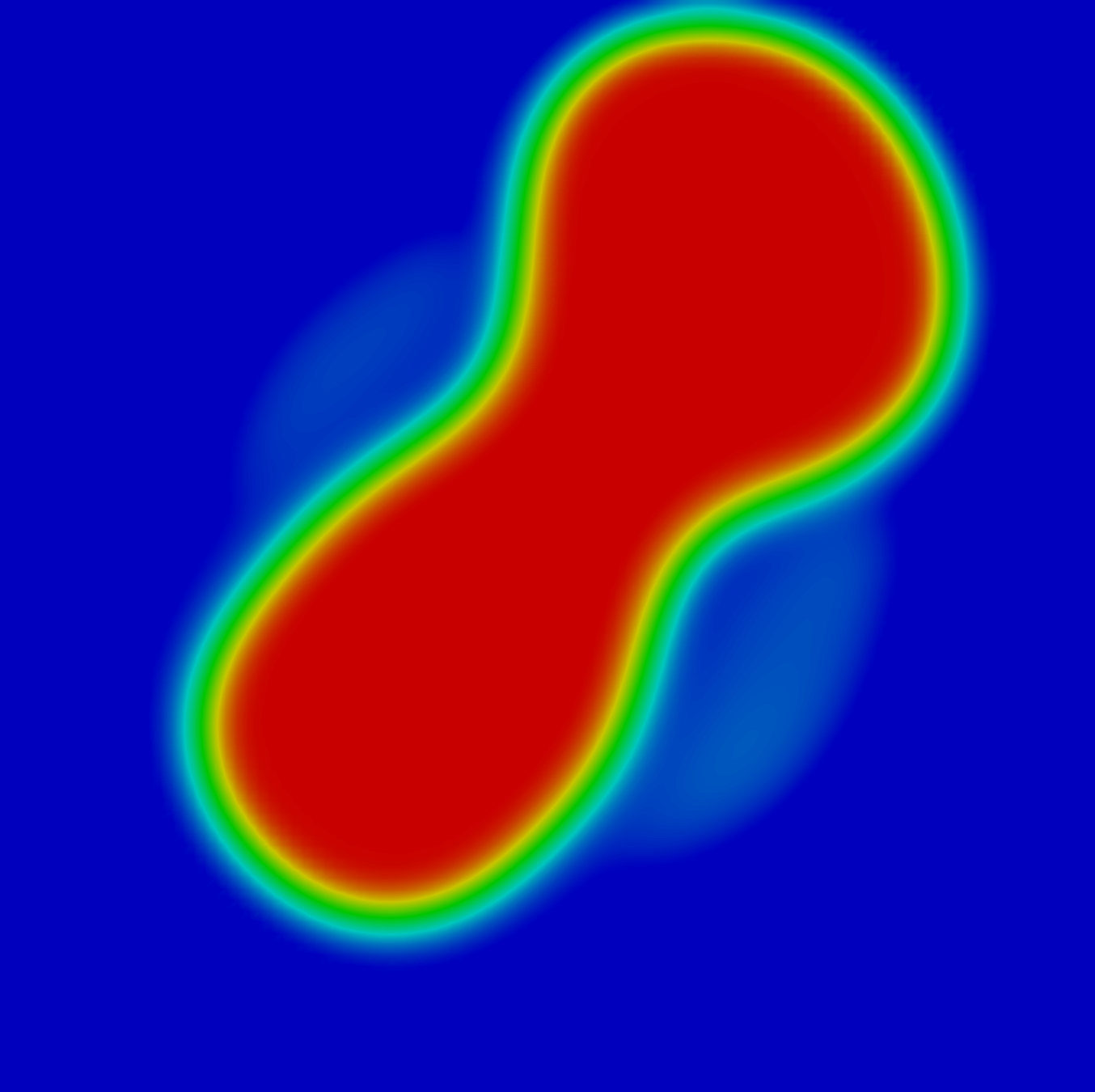}&
			\includegraphics[width=.22\textwidth]{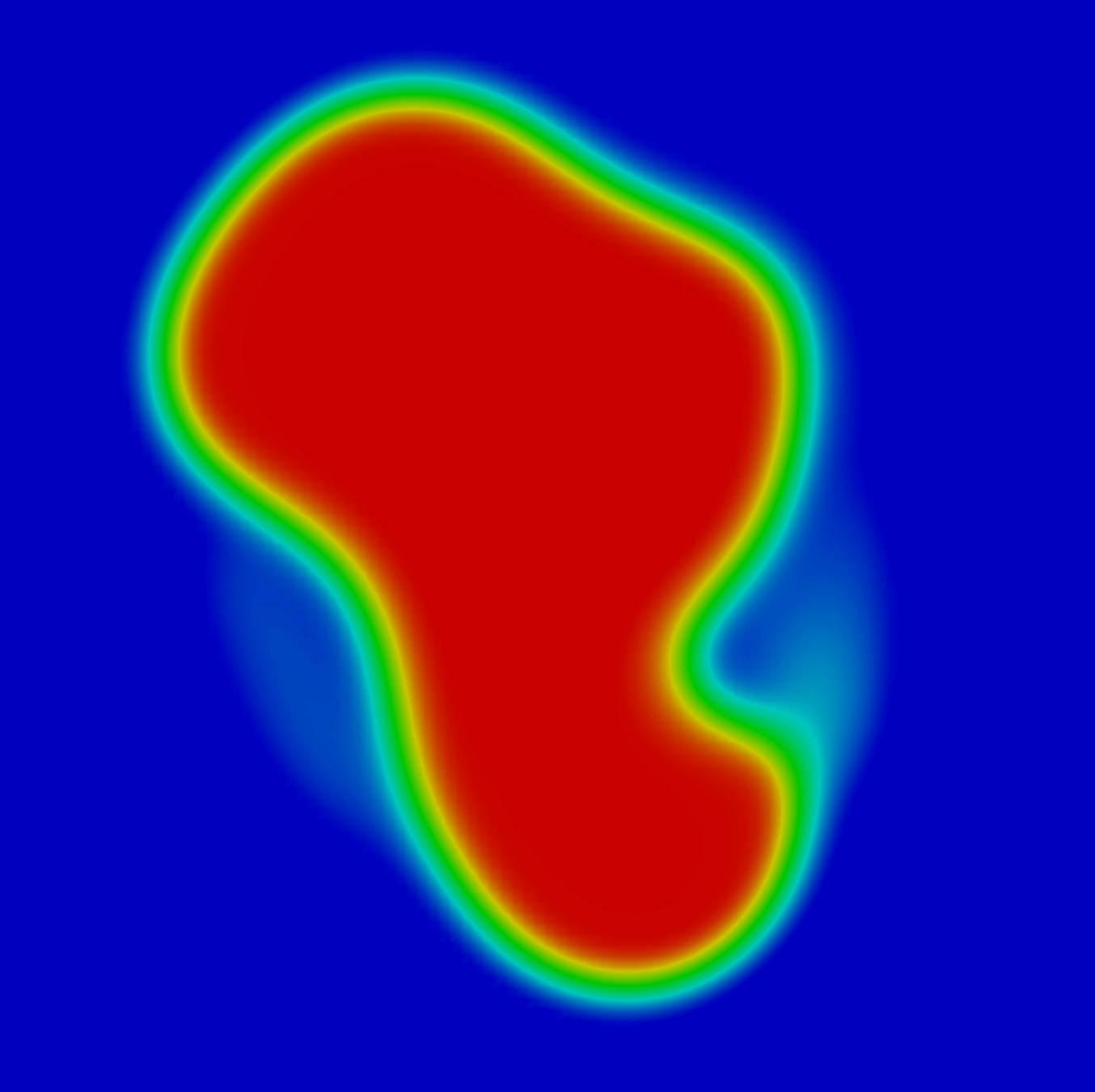}&
			\includegraphics[width=.22\textwidth]{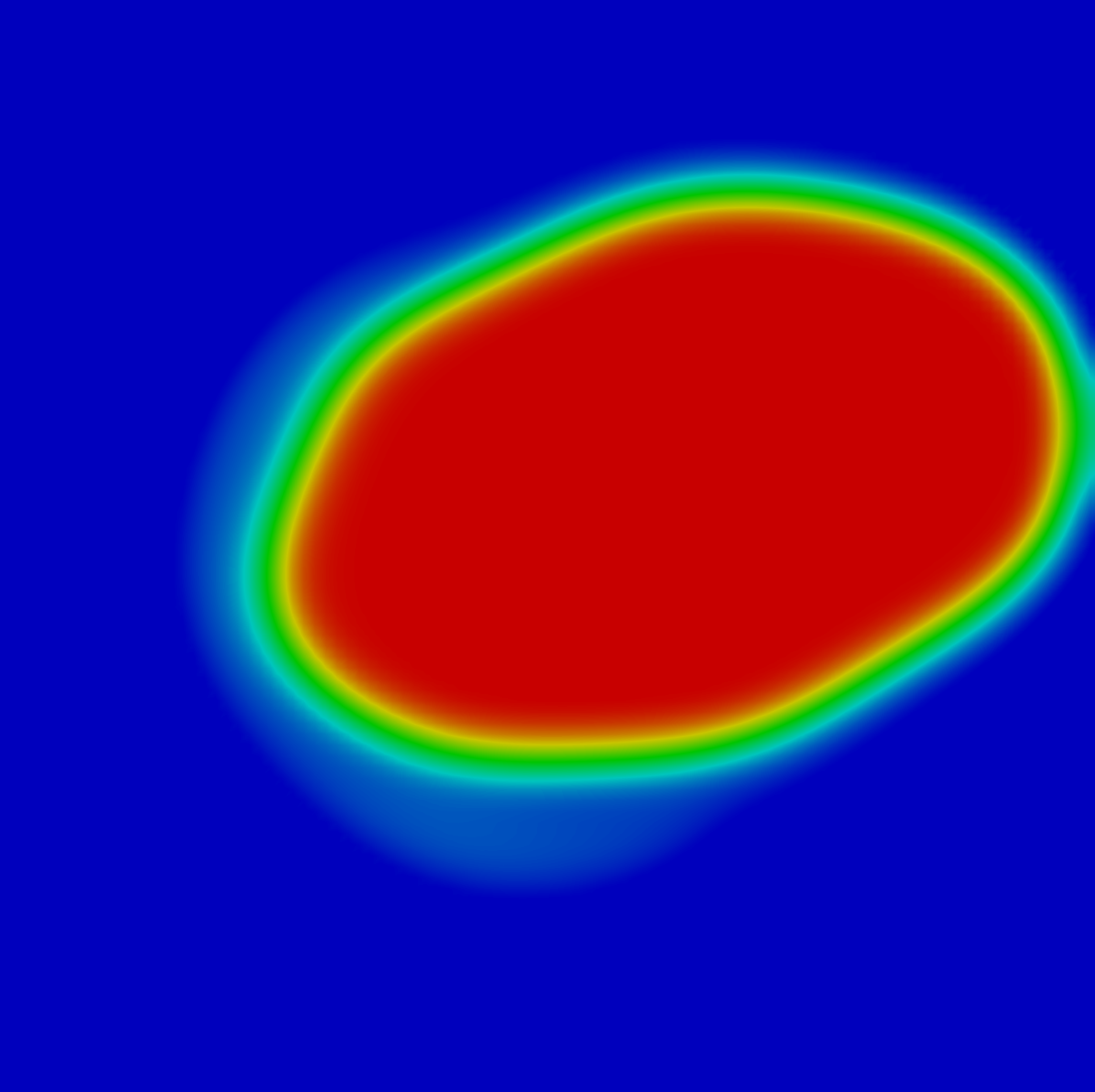}
		\end{tabular} \\
		\hspace{1.5cm}\includegraphics[width=.6\textwidth]{Figures/color1.png}
		\caption{\label{Fig:TumorDifferentSeedHighNoise}Evolution of the tumor volume fraction $\phi(t,x)$ over time in the domain $D$ at a fixed noise intensity $\nu=2.5$ for four random seeds.}
	\end{center} 
\end{figure}

\begin{figure}[H] \begin{center}
		\begin{tabular}{cM{.19\textwidth}M{.19\textwidth}M{.19\textwidth}M{.19\textwidth}}
			&
			\quad seed 1 & \quad seed 2& \quad seed 3& \quad seed 4 \\
			\!\!\!\!\!\!$t=0.4$\!\!\!\!\!&
			\includegraphics[width=.22\textwidth]{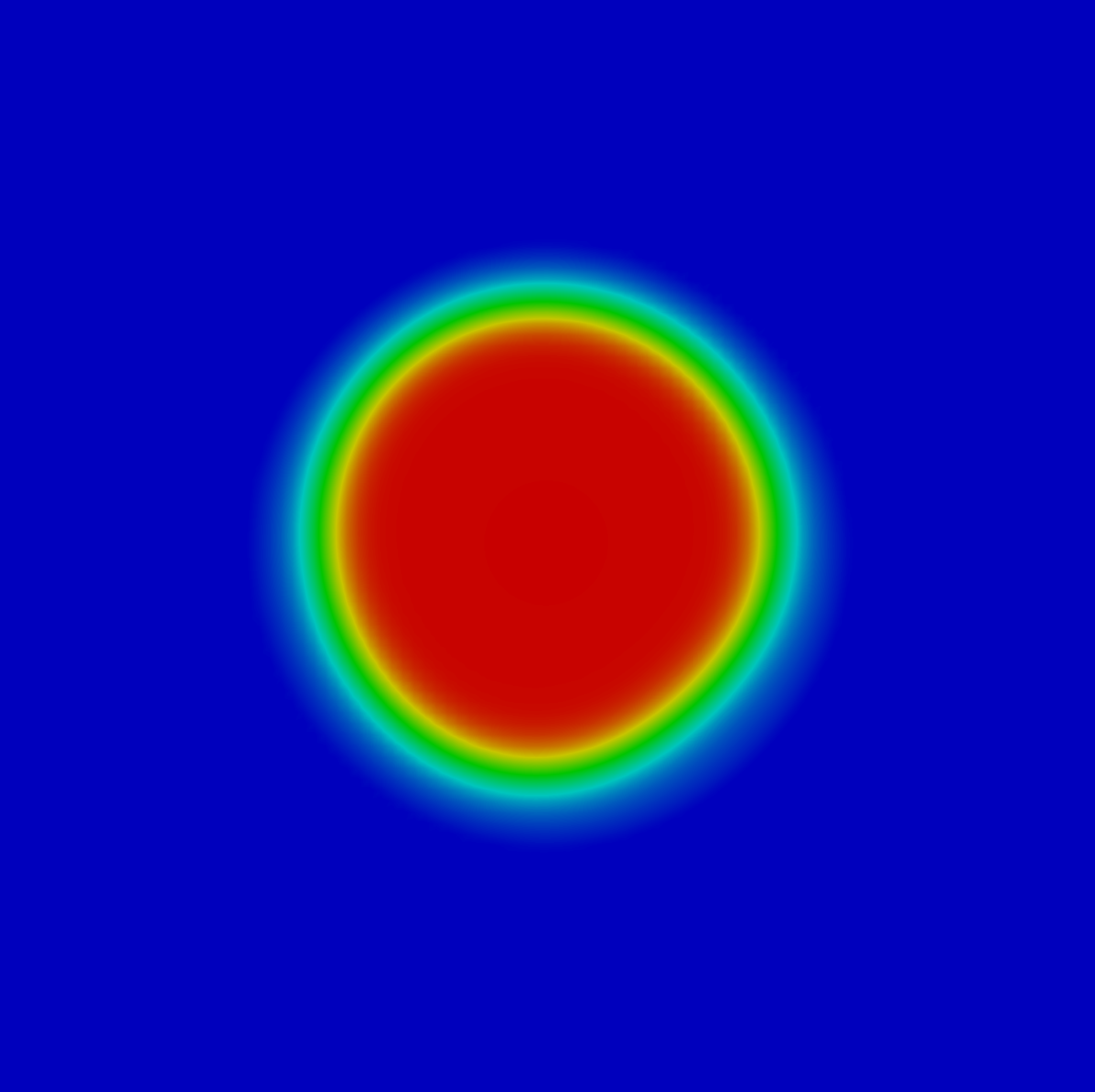}&
			\includegraphics[width=.22\textwidth]{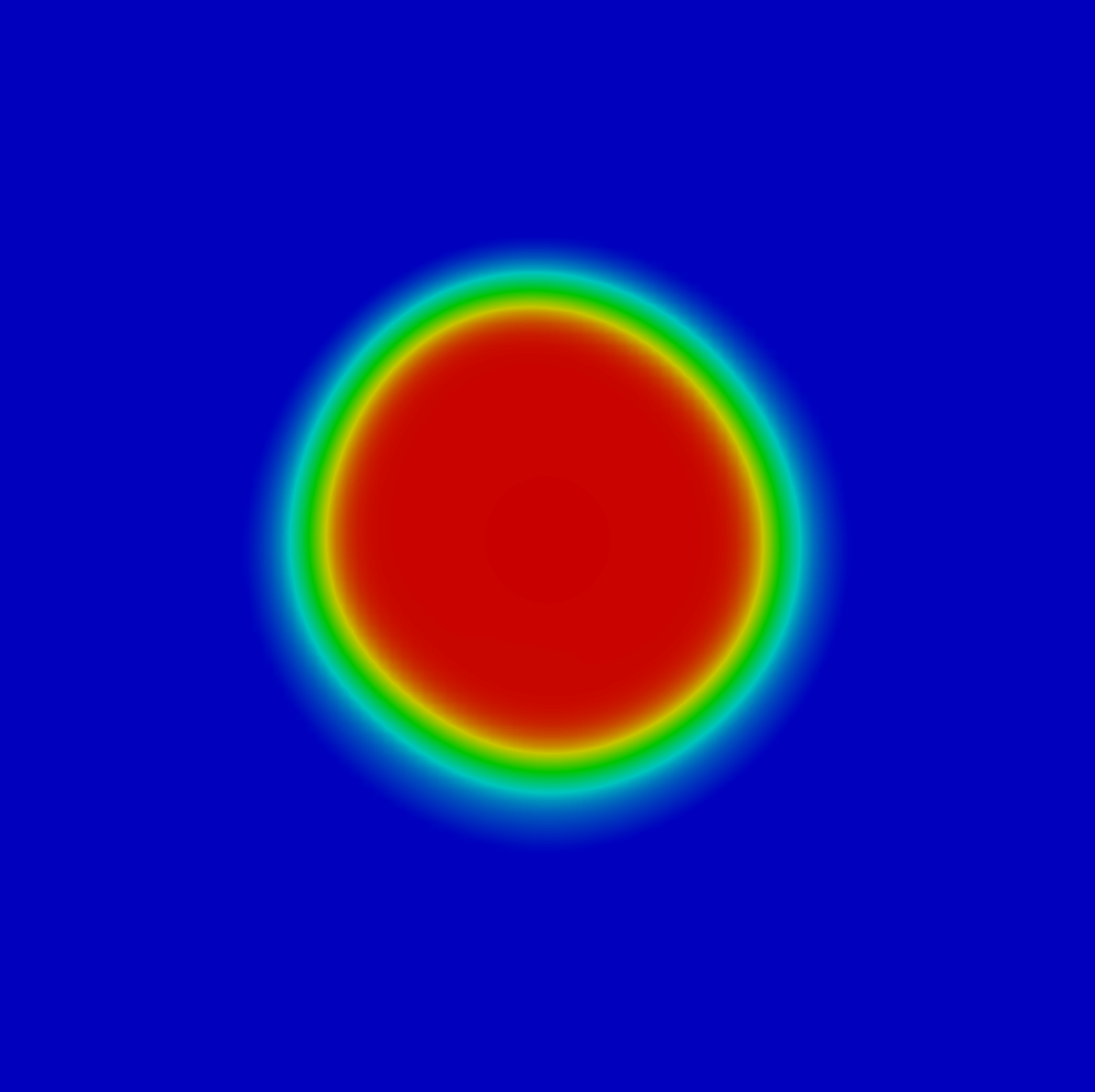}&
			\includegraphics[width=.22\textwidth]{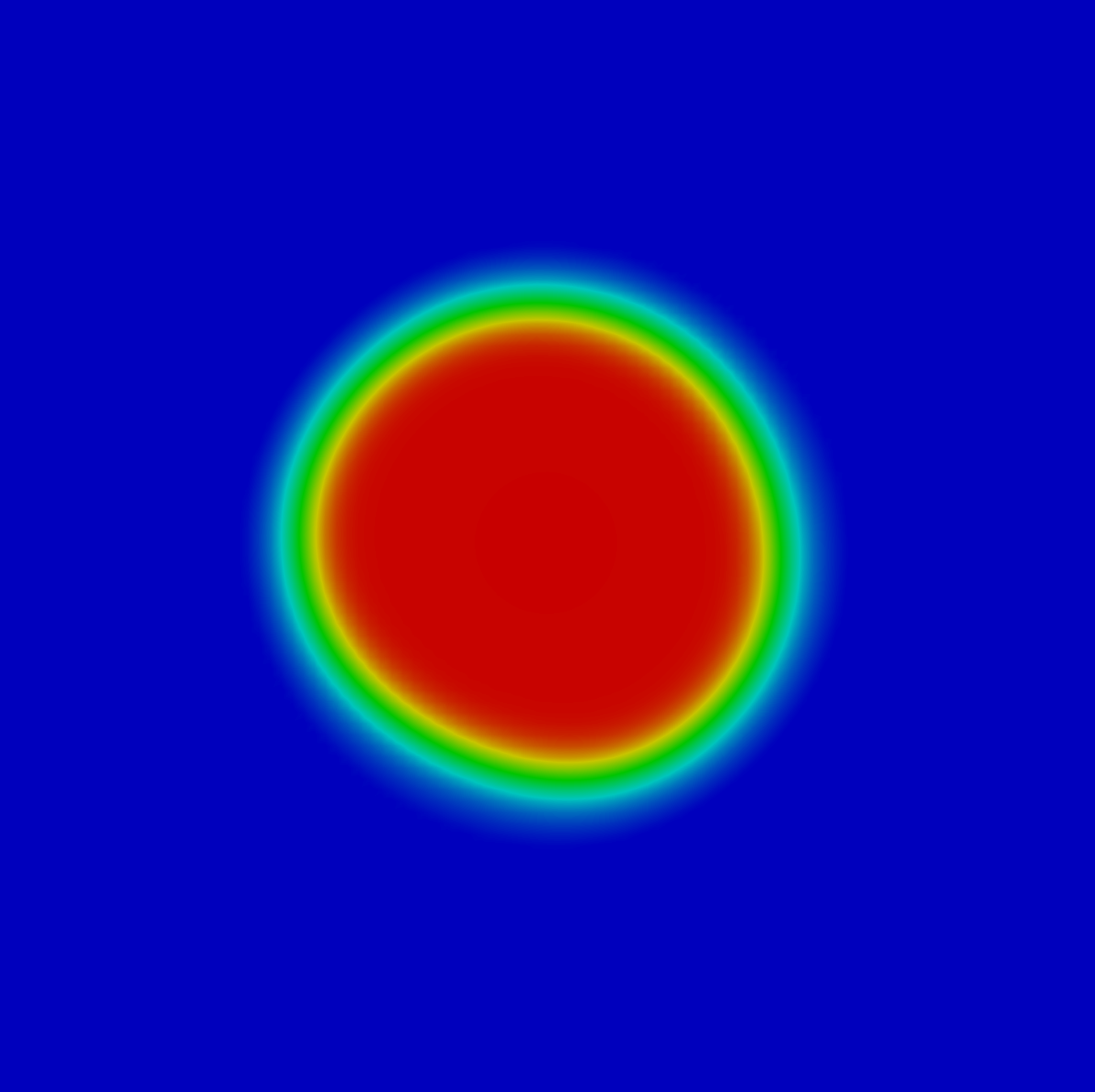}&
			\includegraphics[width=.22\textwidth]{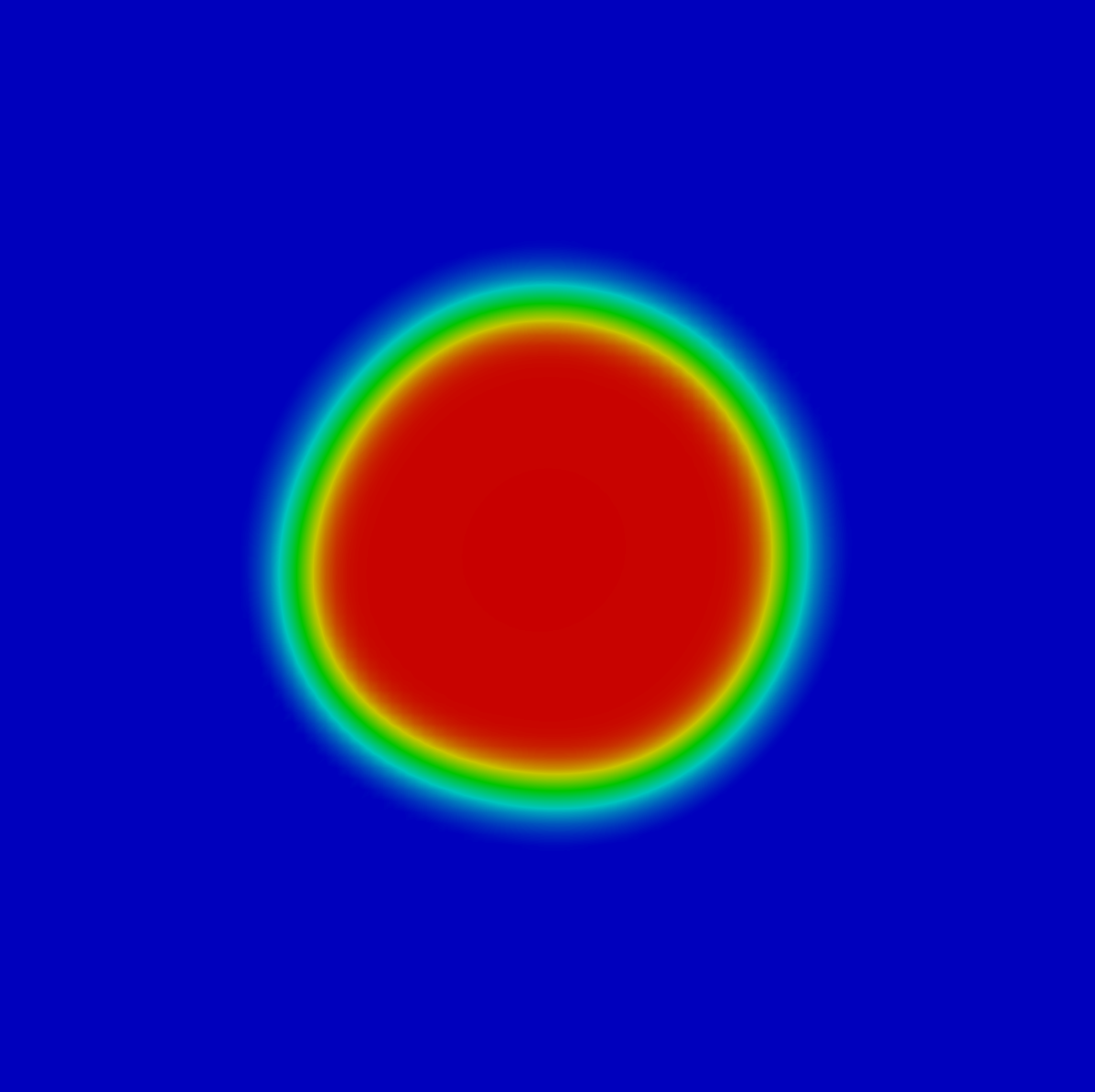}\\[-.1cm]
			\!\!\!\!\!\!$t=1.0$\!\!\!\!\!&
			\includegraphics[width=.22\textwidth]{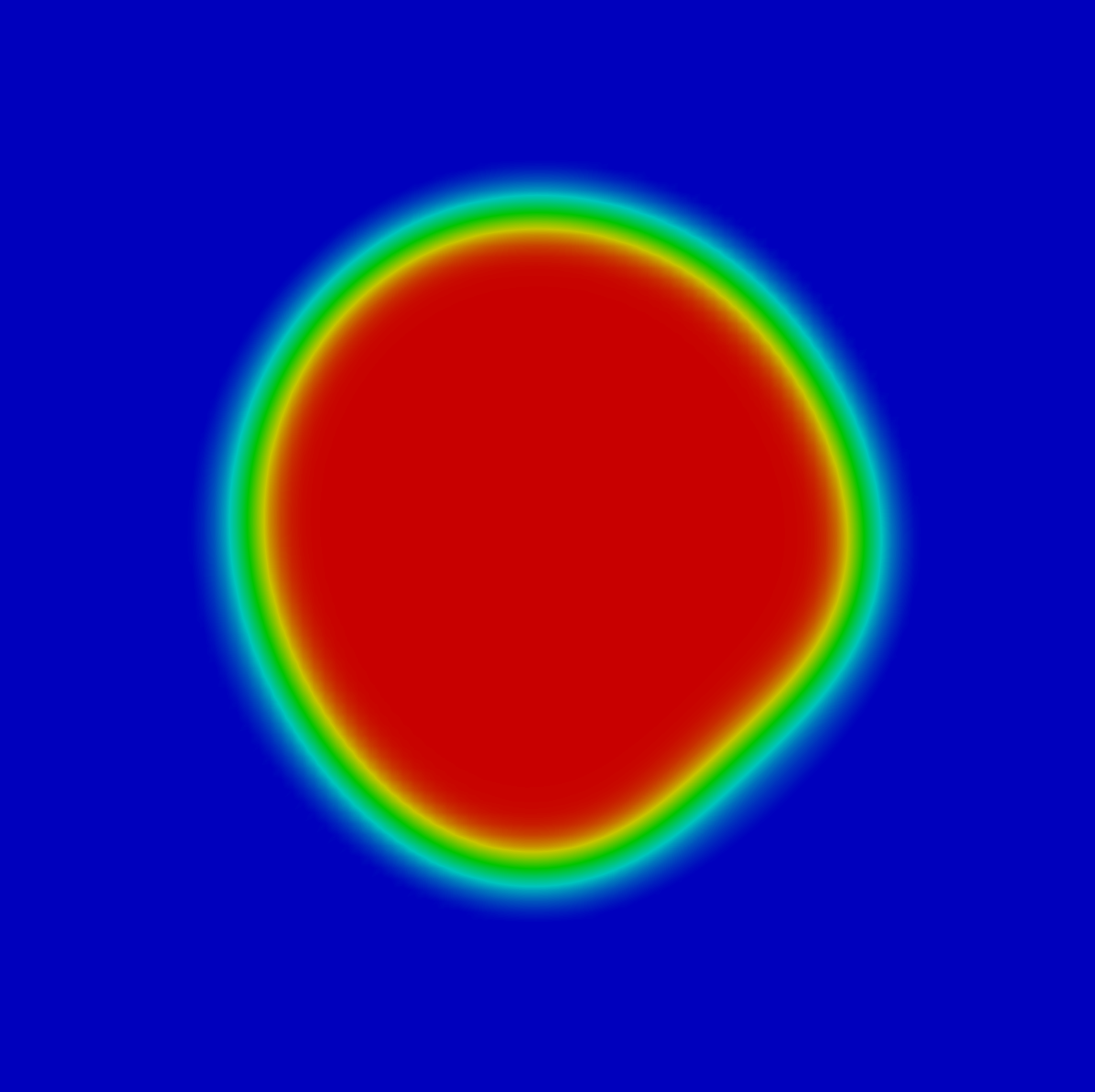}&
			\includegraphics[width=.22\textwidth]{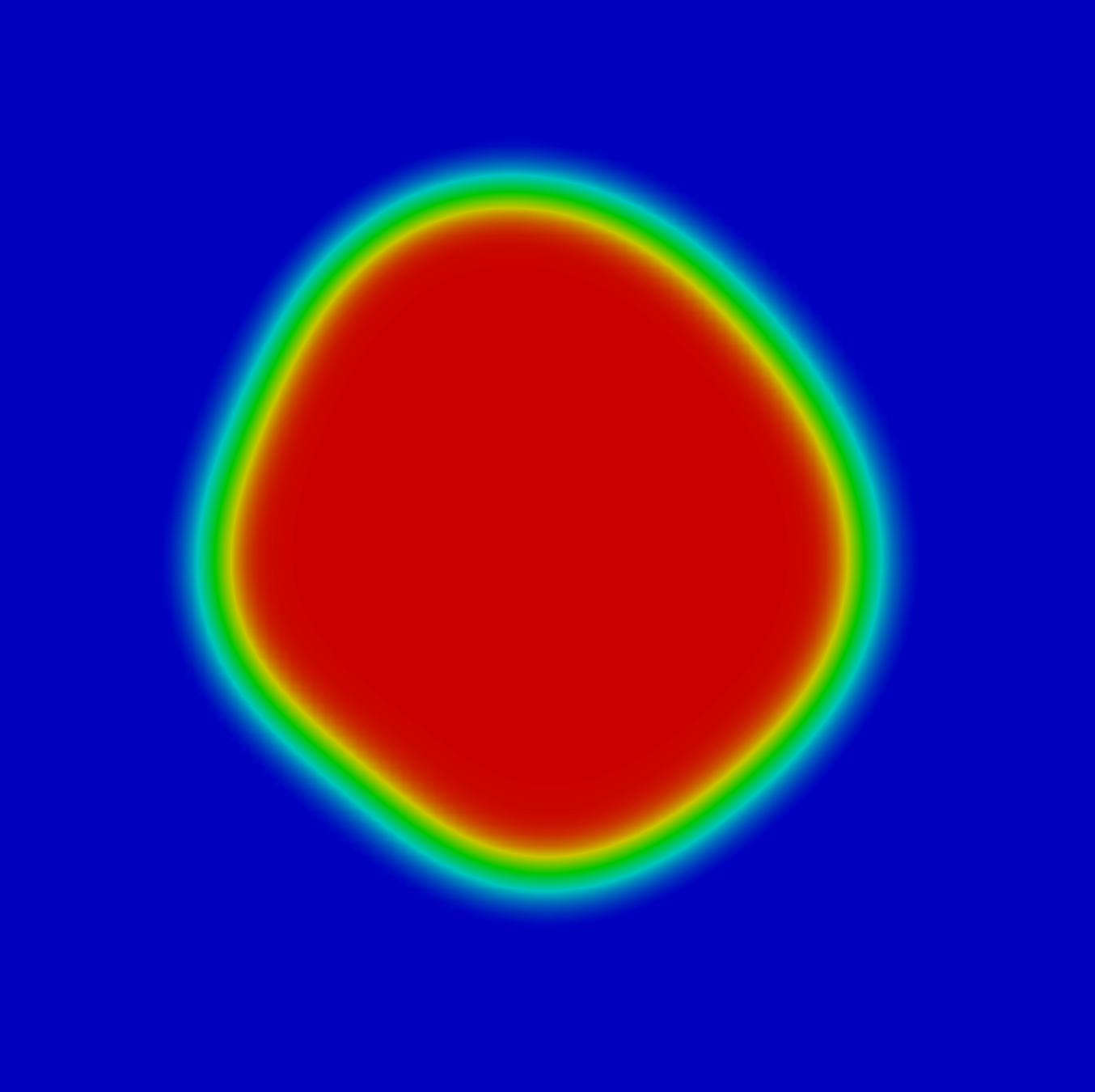}&
			\includegraphics[width=.22\textwidth]{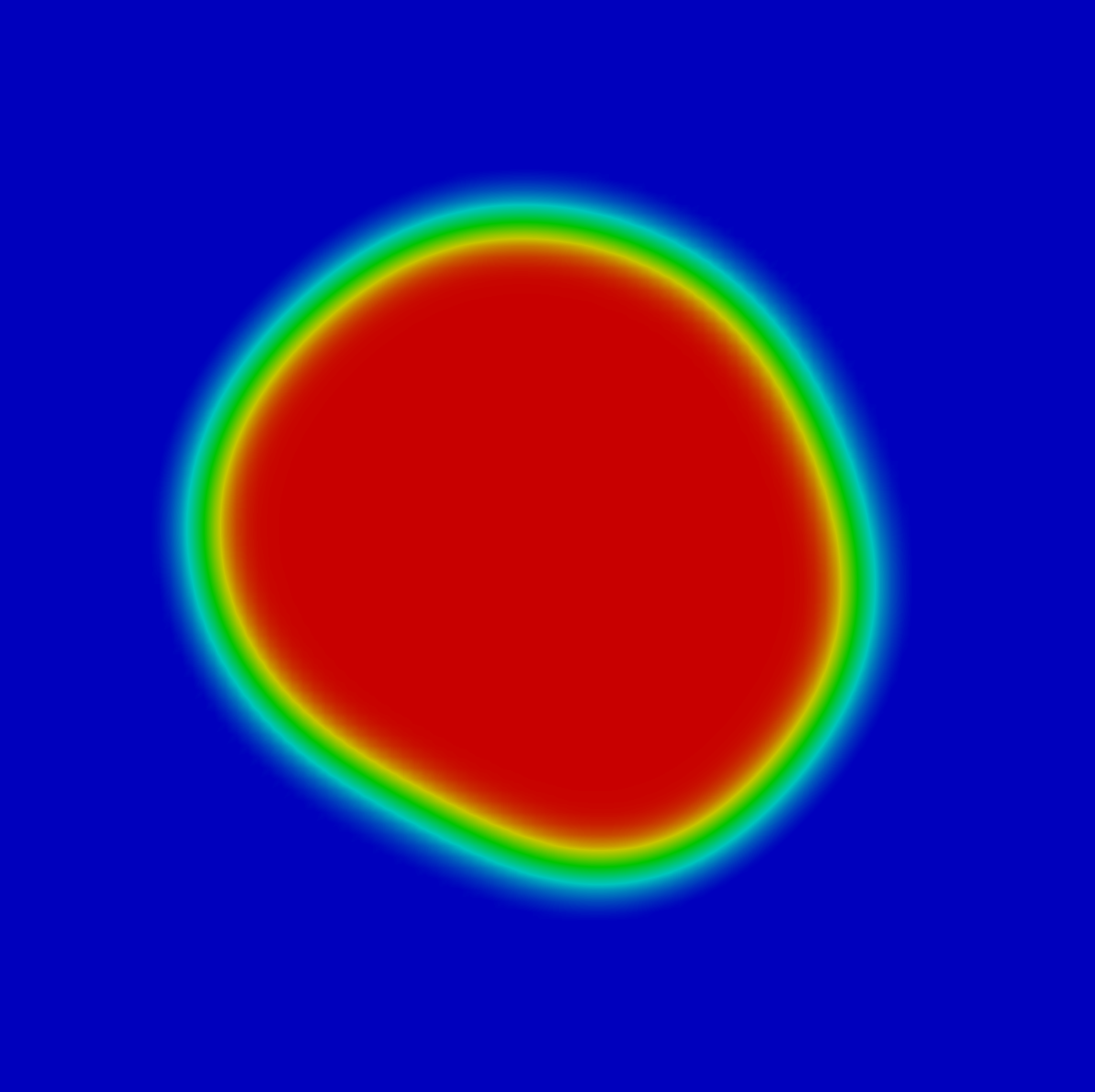}&
			\includegraphics[width=.22\textwidth]{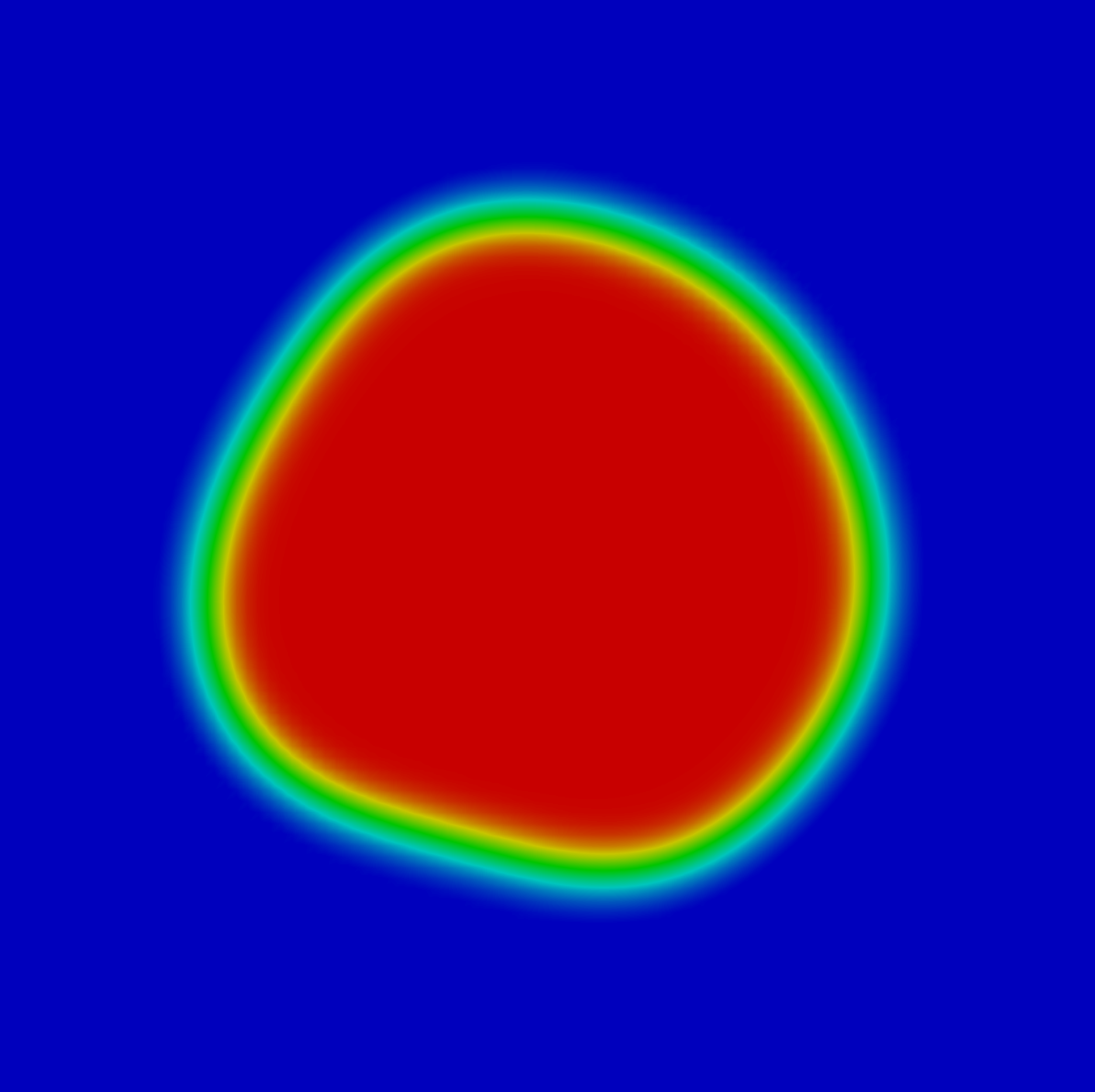}
		\end{tabular} \\
		\hspace{1.5cm}\includegraphics[width=.6\textwidth]{Figures/color1.png}
		\caption{\label{Fig:TumorDifferentSeedLowerNoise}Evolution of the tumor volume fraction $\phi(t,x)$ over time in the domain $D$ at a fixed noise intensity $\nu=0.5$ for four random seeds.}
	\end{center} 
\end{figure}

\cref{Fig:TumorDifferentSeedLowerNoise} explores the evolution of the tumor under conditions of low noise levels while considering different random seeds. Similar to \cref{Fig:TumorDifferentSeedHighNoise}, the simulations are influenced by randomness, but in this case, noise levels are lower. In contrast to the high noise level case, \cref{Fig:TumorDifferentSeedLowerNoise} reveals that, at $t=0.4$, the tumor shapes exhibit some resemblance among different random seeds. While the tumor shapes may slowly drift apart over time, the variability at earlier time steps is not as pronounced as in the higher noise level case. These observations underscore the effect of noise levels on tumor evolution. In low noise scenarios, the tumor shapes at early times exhibit a degree of consistency among different simulations, while high noise levels lead to a wide range of random and distinct shapes from the outset.

In \cref{Fig:TumorContourDifferentSeed}, we present a contour plot that visualizes the impact of different random seeds on tumor evolution. The contour plot shows the results of five distinct samples, with each sample representing a unique simulation outcome at different time steps. This contour plot uses the contour line of 50\% tumor cells to demarcate the evolving boundary of the tumor. What's striking in \cref{Fig:TumorContourDifferentSeed} is the observable divergence in tumor contours over time. Each of the four distinct samples exhibits its own path of tumor growth, highlighting the variability and unpredictability introduced by high noise levels. The contour plot provides a clear visual representation of how the tumor shapes drift apart over time in this high noise level scenario. The initial similarity in contour shapes slowly gives way to distinct and random tumor contours, underscoring the influence of randomness on tumor growth patterns. Unlike the high noise scenario, where tumor contours drift apart, the low noise case reveals the presence of wobbly circular shapes. The contours exhibit a certain consistency among different random seeds, emphasizing that the presence of noise, although present, does not lead to as significant a divergence in tumor shapes as in the high noise case.

\begin{figure}[H] \begin{center}
		\begin{tabular}{cM{.19\textwidth}M{.19\textwidth}M{.19\textwidth}M{.19\textwidth}}
			&\quad $t=0.1$ & \quad $t=0.4$& \quad $t=0.7$&\quad  $t=1.0$ \\
			\!\!\!\!\!\!$\nu=2.5$\!\!\!\!\! &	\includegraphics[width=.22\textwidth]{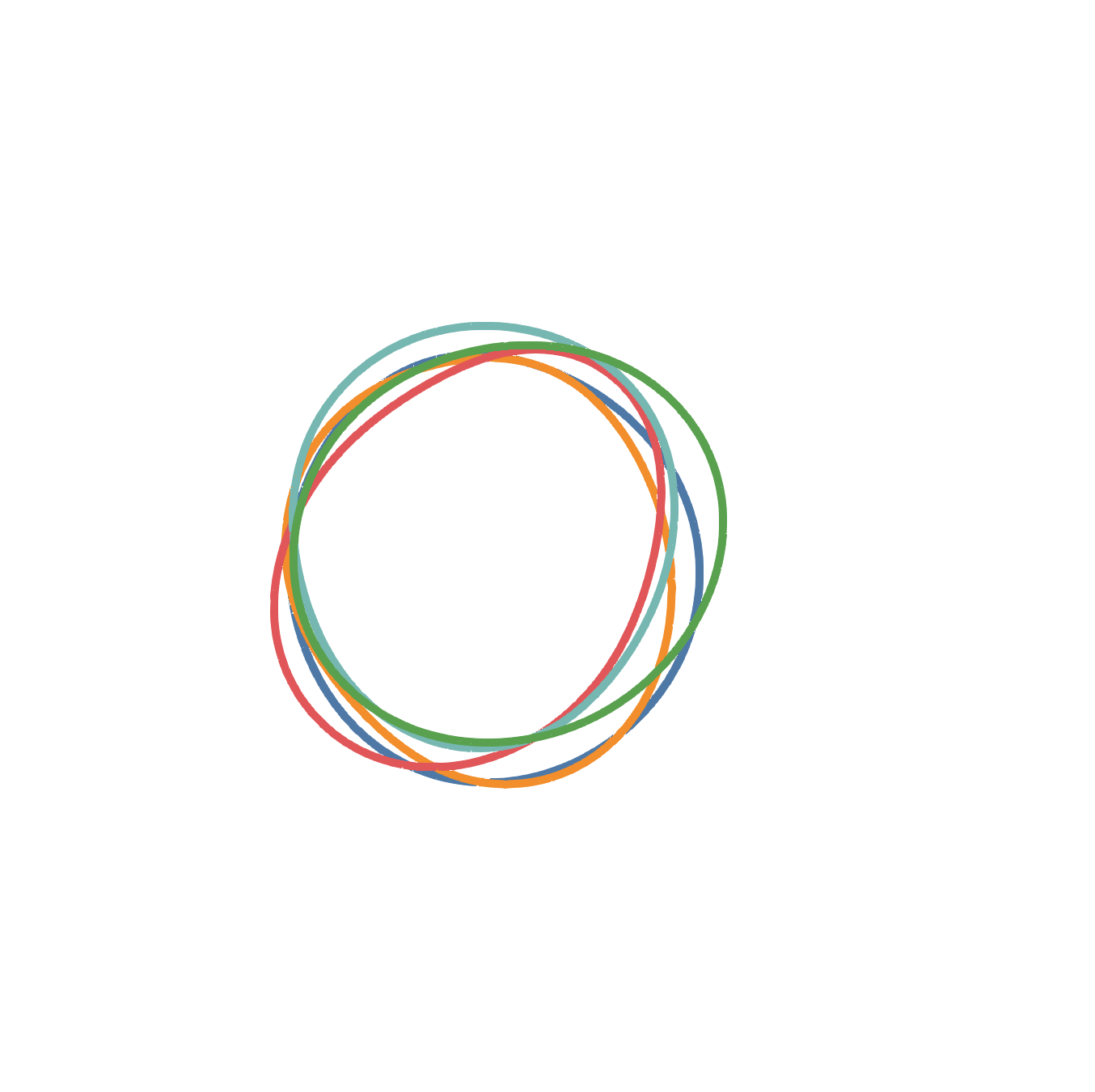}&
			\includegraphics[width=.22\textwidth]{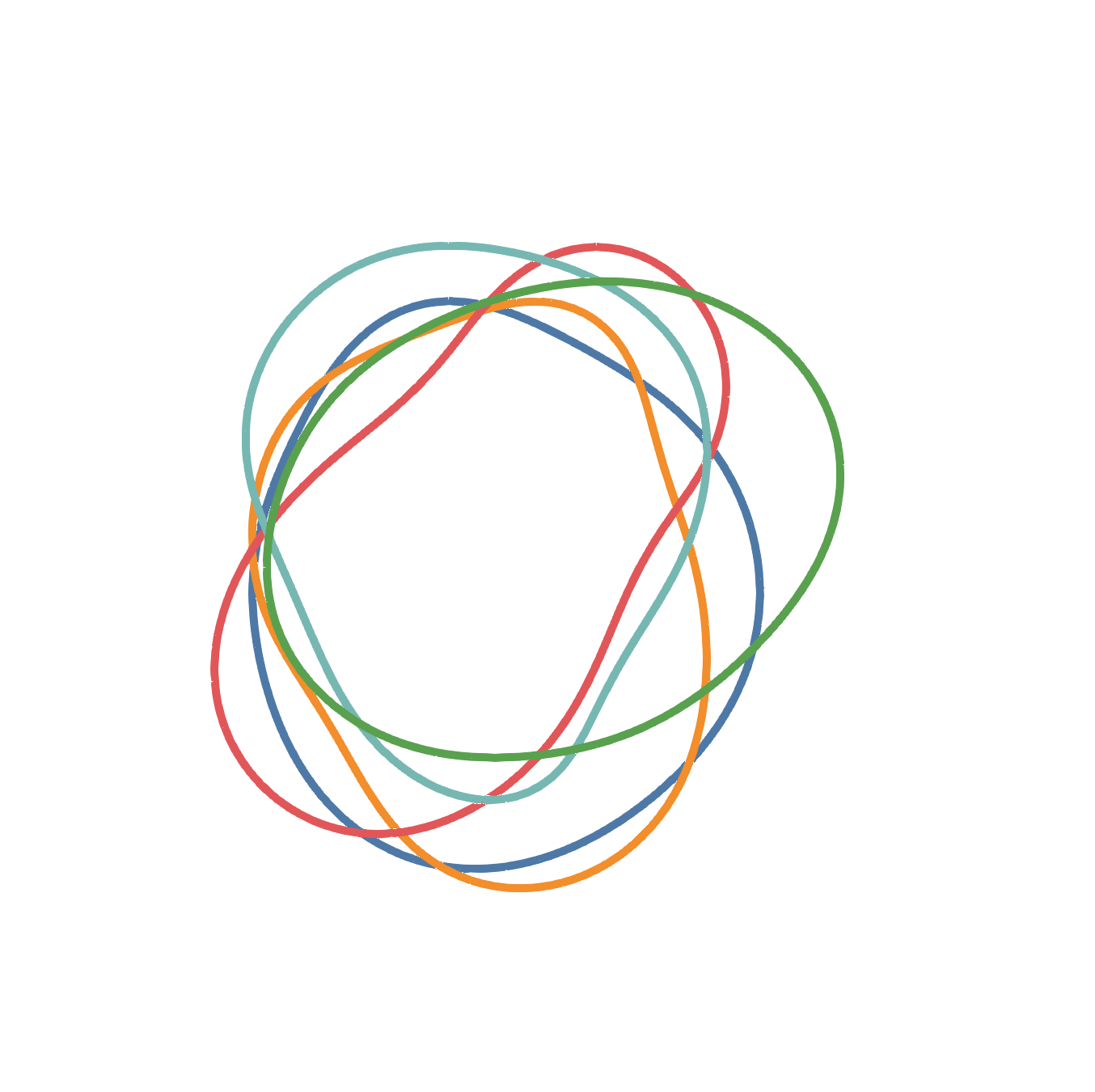}&
			\includegraphics[width=.22\textwidth]{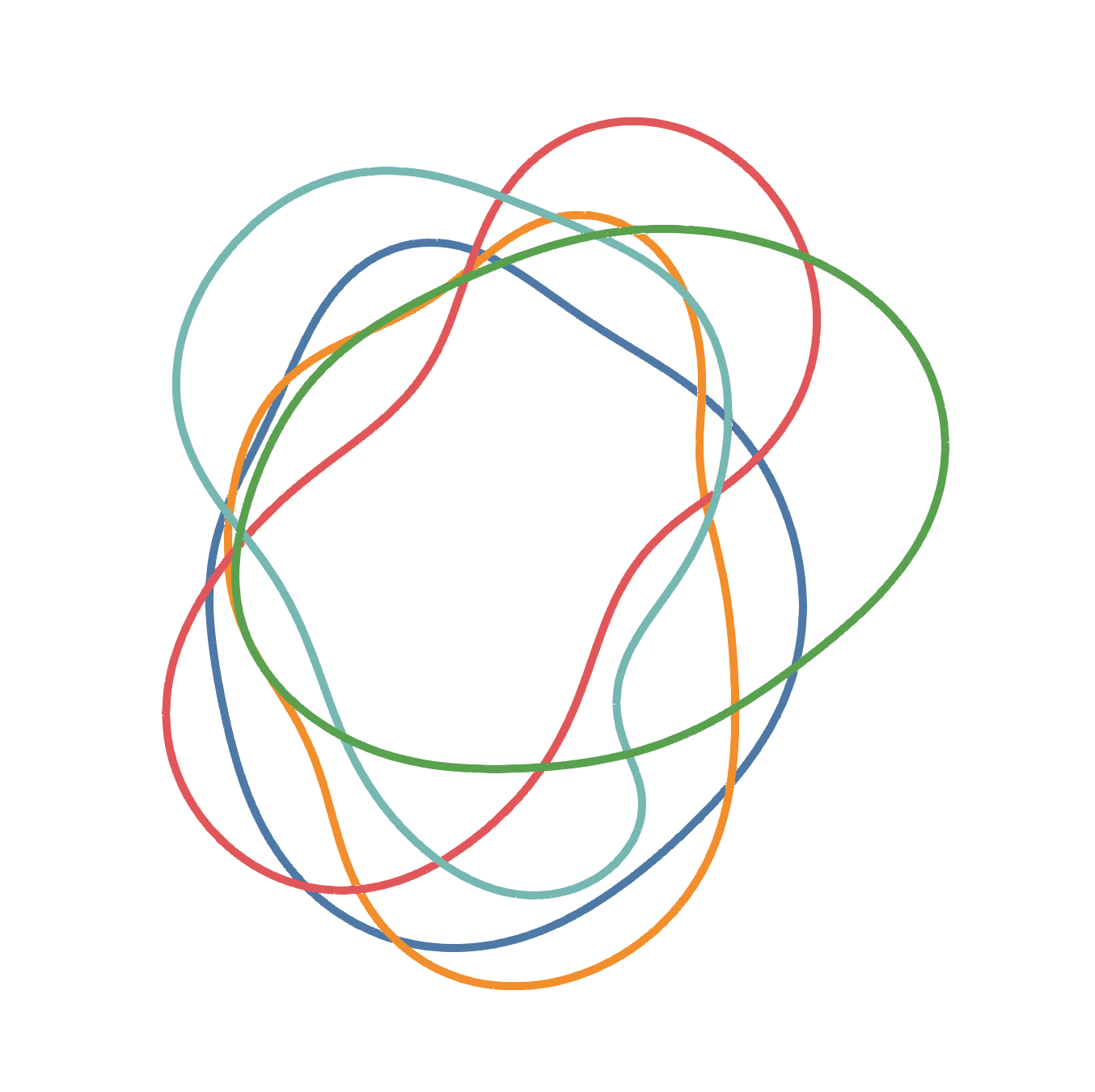}&
			\includegraphics[width=.22\textwidth]{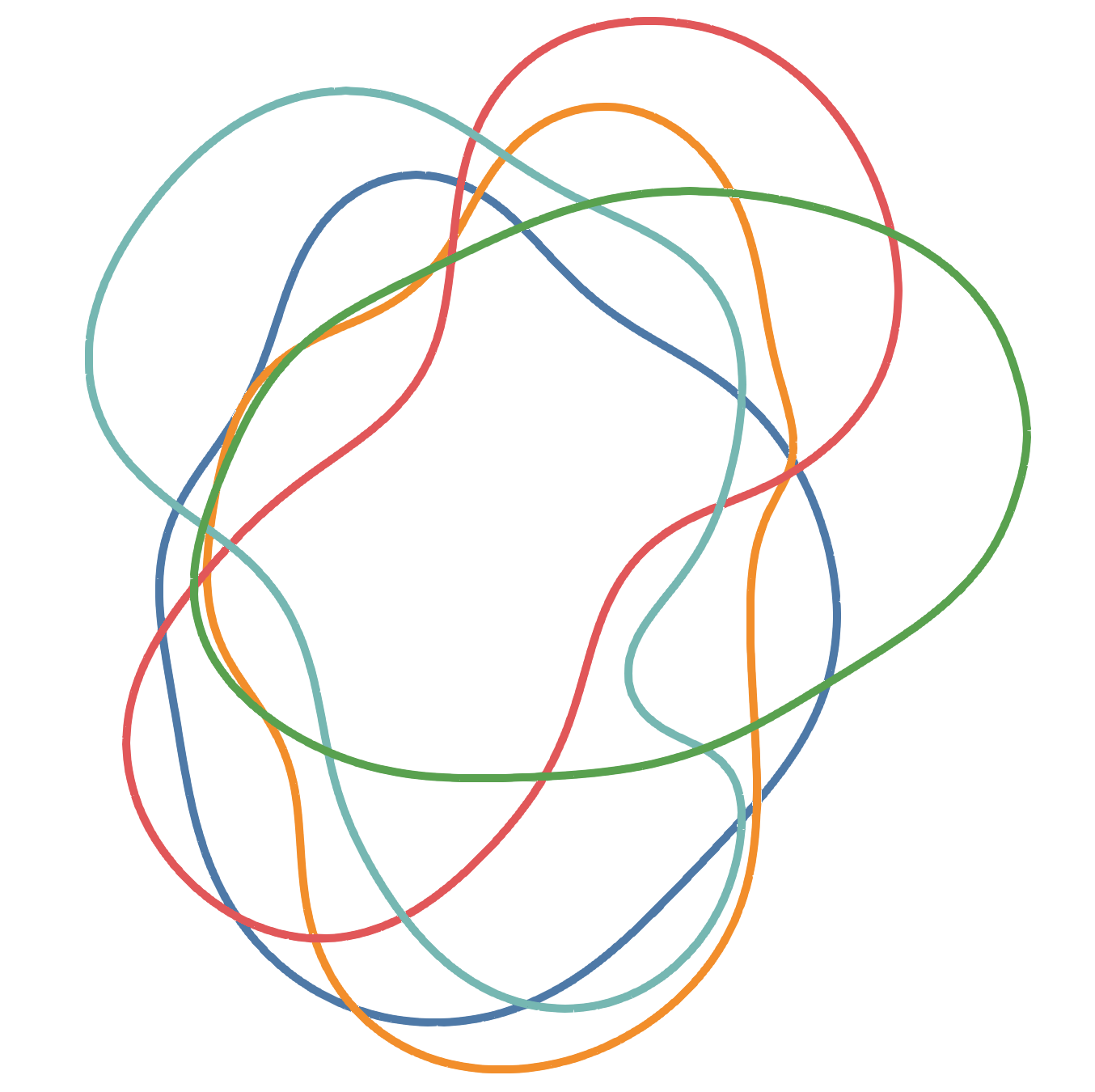} \\
			\!\!\!\!\!\!$\nu=0.5$\!\!\!\!\! & \includegraphics[width=.22\textwidth]{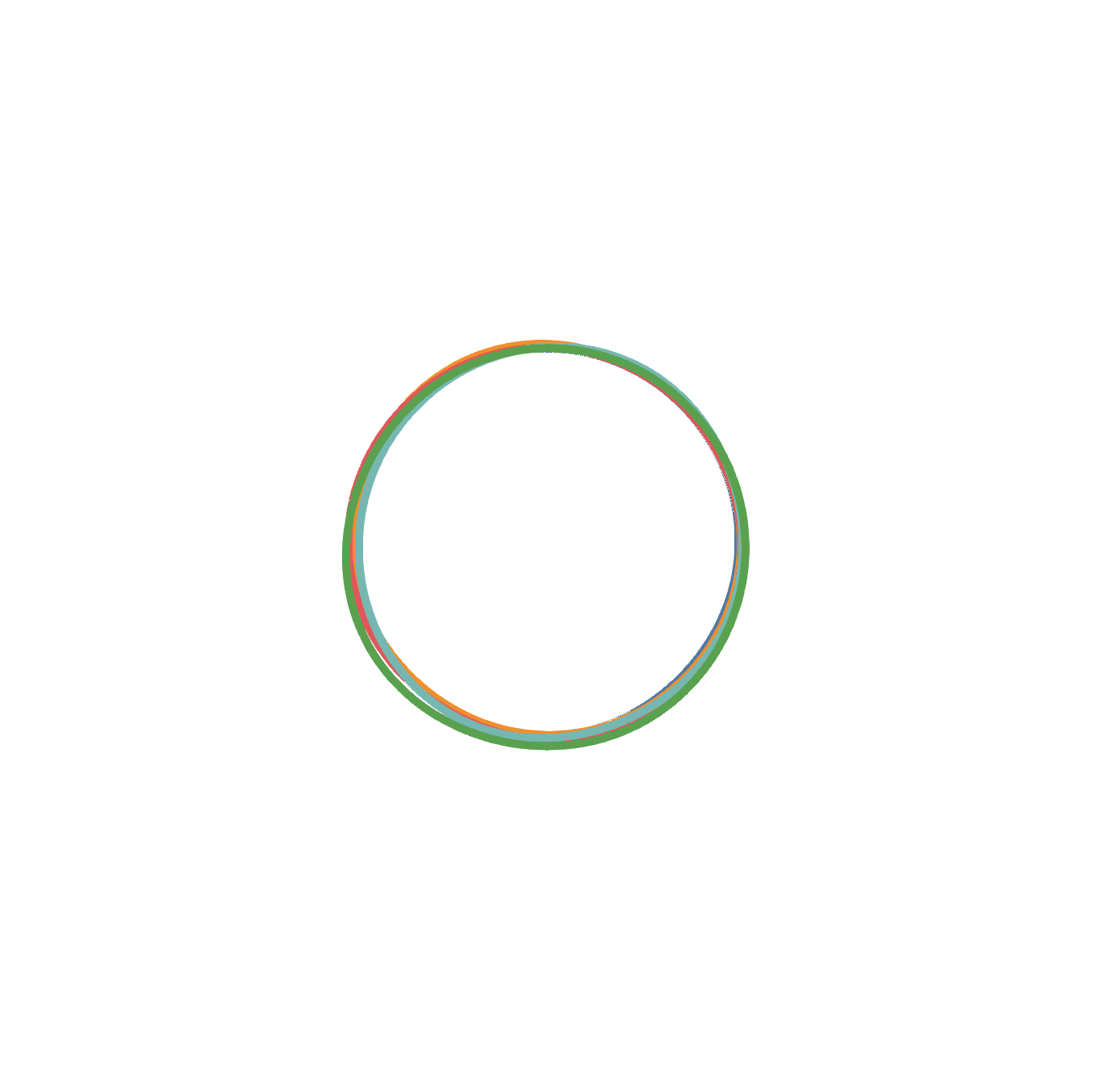}&
			\includegraphics[width=.22\textwidth]{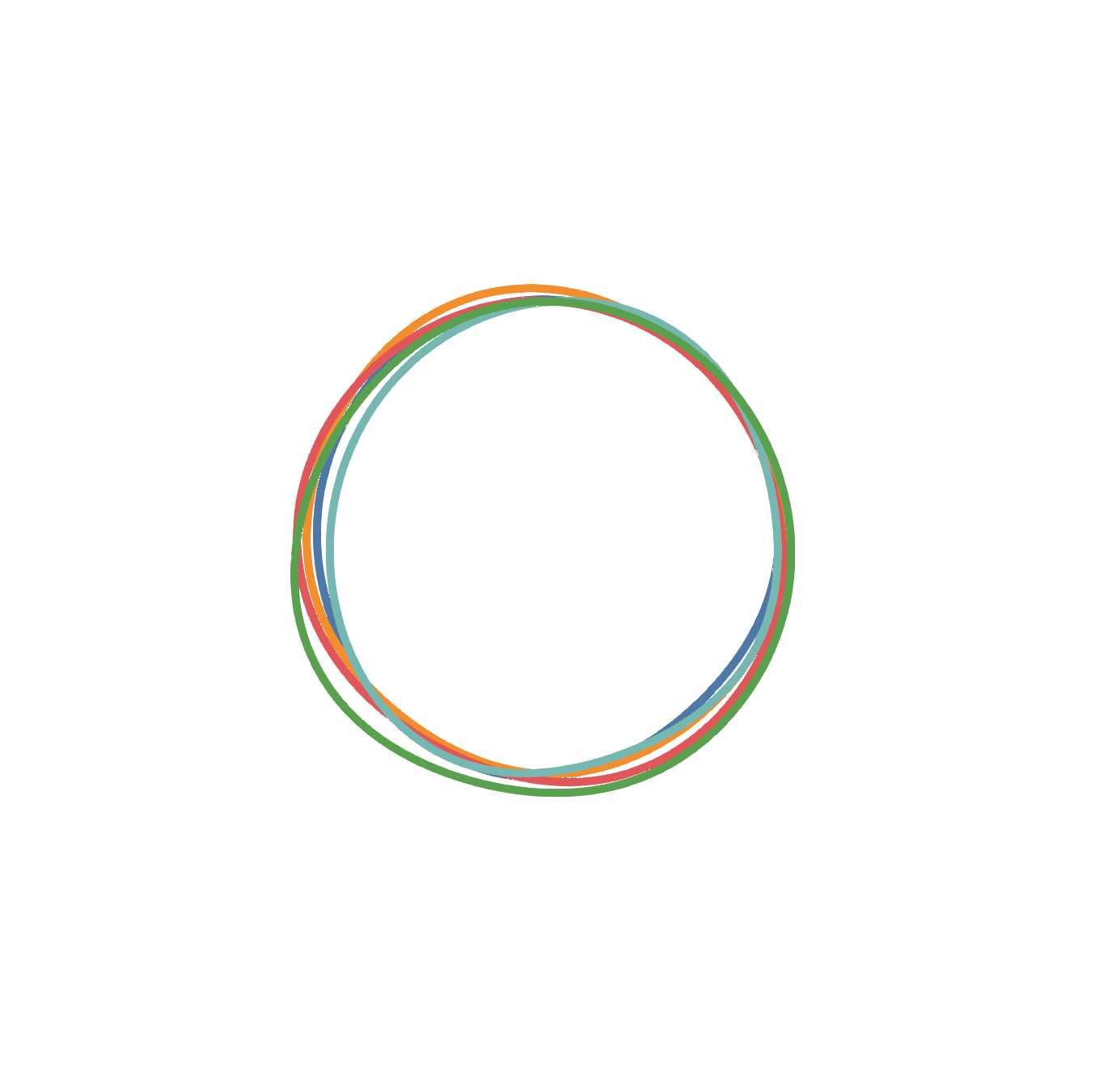}&
			\includegraphics[width=.22\textwidth]{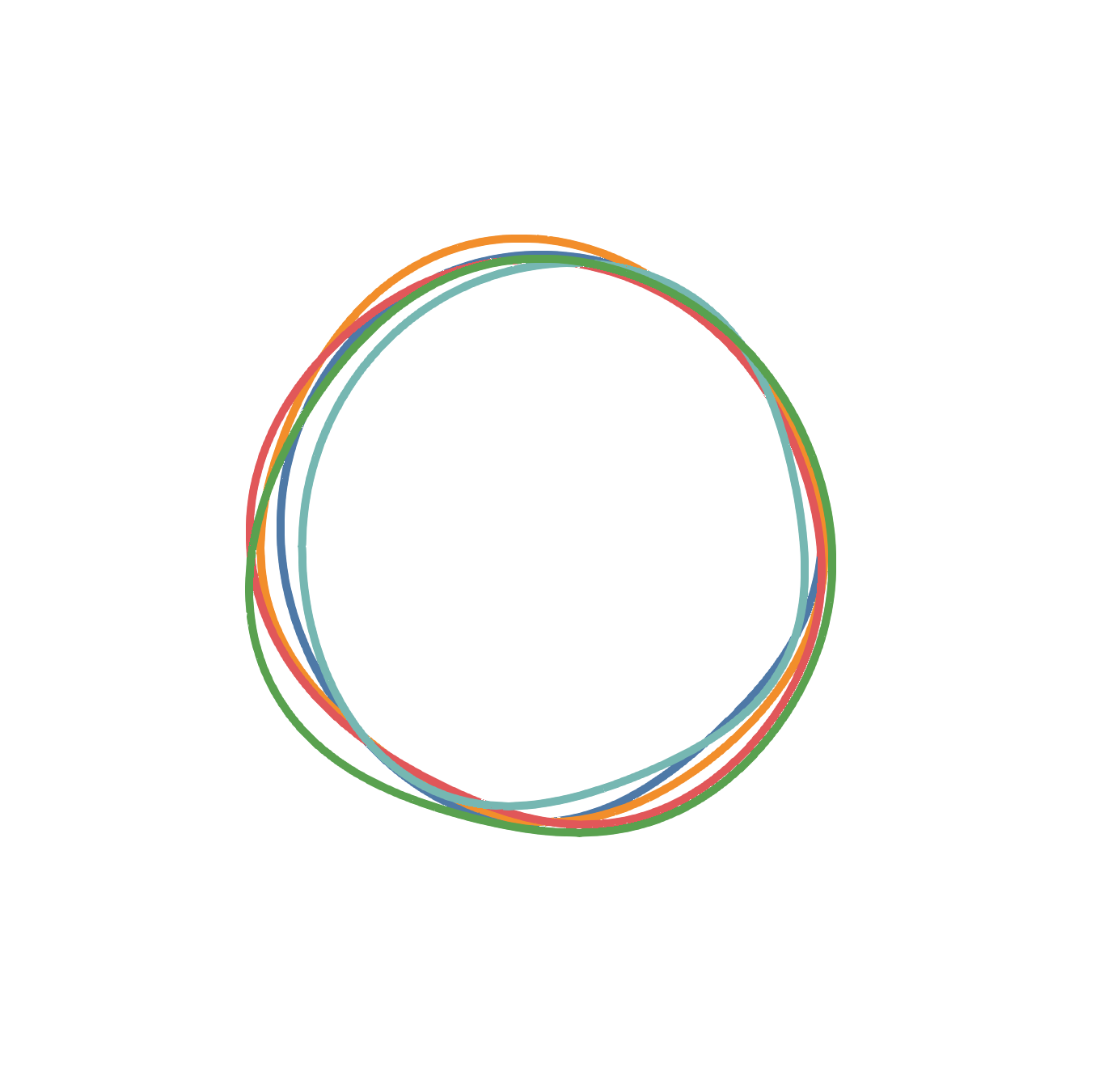}&
			\includegraphics[width=.22\textwidth]{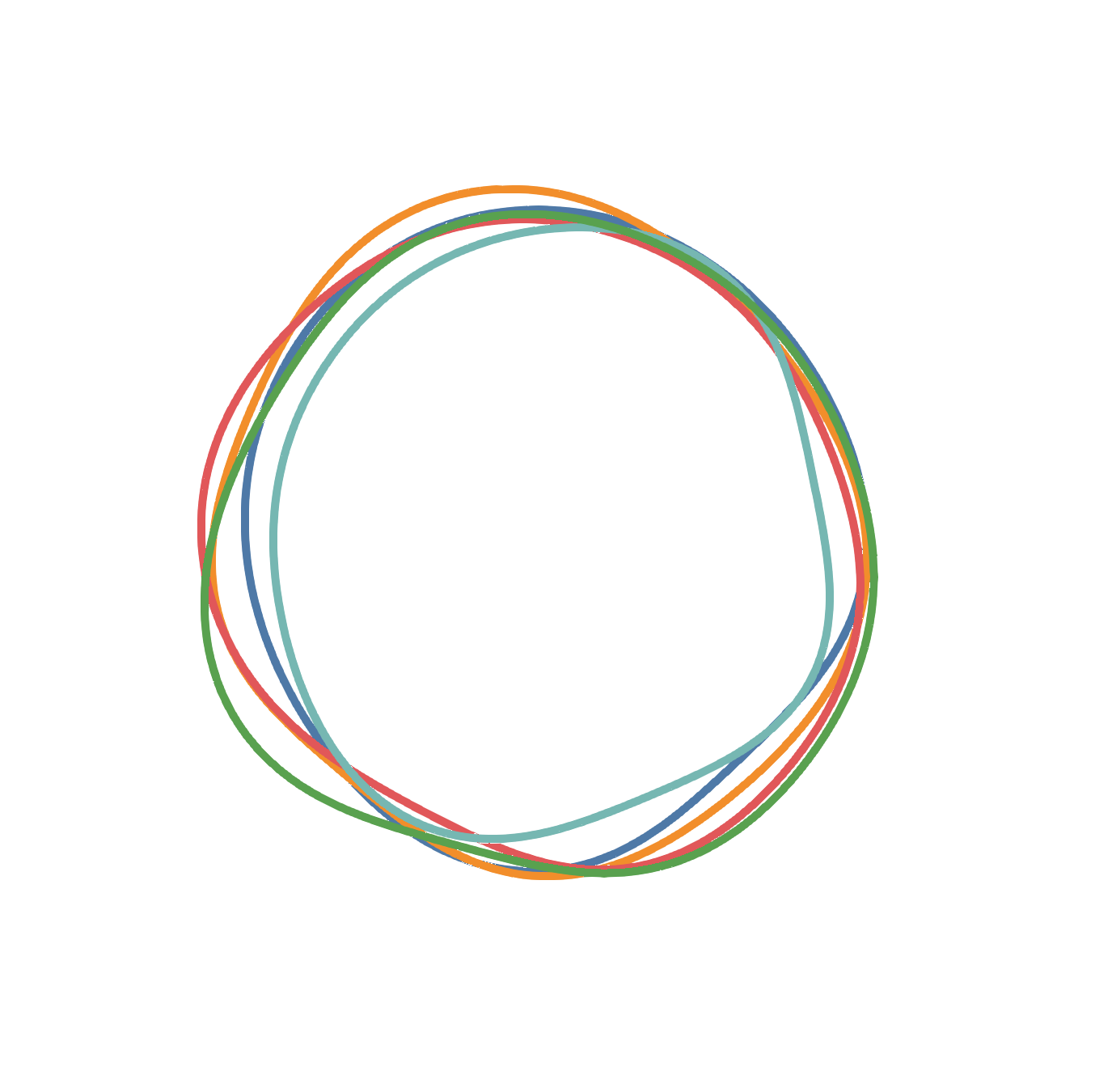}
		\end{tabular} 
		\caption{\label{Fig:TumorContourDifferentSeed}Evolution of the contour lines $\phi(t,x)=0.5$ over time at the fixed noise intensities $\nu\in\{0.5,2.5\}$ for random seeds}
	\end{center} 
\end{figure}

\backmatter

\renewcommand*{\bibfont}{\normalfont\footnotesize}
\setlength{\bibsep}{1pt}
\bibliography{literature.bib}

\begin{thebibliography}{}

\bibitem[Albano and Giorno, 2006]{albano2006stochastic}
Albano, G. and Giorno, V. (2006).
\newblock A stochastic model in tumor growth.
\newblock {\em Journal of Theoretical Biology}, 242(2):329--336.

\bibitem[Aln{\ae}s et~al., 2015]{alnaes2015fenics}
Aln{\ae}s, M., Blechta, J., Hake, J., Johansson, A., Kehlet, B., Logg, A.,
  Richardson, C., Ring, J., Rognes, M.~E., and Wells, G.~N. (2015).
\newblock The {FEniCS} project version 1.5.
\newblock {\em Archive of Numerical Software}, 3(100).

\bibitem[Badri and Leder, 2016]{badri2016optimal}
Badri, H. and Leder, K. (2016).
\newblock Optimal treatment and stochastic modeling of heterogeneous tumors.
\newblock {\em Biology Direct}, 11(1):1--17.

\bibitem[Barbu, 2010]{barbu2010nonlinear}
Barbu, V. (2010).
\newblock {\em Nonlinear Differential Equations of Monotone Types in Banach
  Spaces}.
\newblock Springer Science \& Business Media.

\bibitem[Breit et~al., 2018]{breit2018stochastically}
Breit, D., Feireisl, E., and Hofmanov{\'a}, M. (2018).
\newblock {\em Stochastically Forced Compressible Fluid Flows}.
\newblock Walter de Gruyter GmbH \& Co KG.

\bibitem[Cherfils et~al., 2011]{cherfils2011cahn}
Cherfils, L., Miranville, A., and Zelik, S. (2011).
\newblock The {C}ahn--{H}illiard equation with logarithmic potentials.
\newblock {\em Milan Journal of Mathematics}, 79:561--596.

\bibitem[Cristini and Lowengrub, 2010]{CL10}
Cristini, V. and Lowengrub, J. (2010).
\newblock {\em Multiscale Modeling of Cancer: An Integrated Experimental and
  Mathematical Modeling Approach}.
\newblock Cambridge University Press.

\bibitem[Cui and Hong, 2022]{cui2022wellposedness}
Cui, J. and Hong, J. (2022).
\newblock Wellposedness and regularity estimates for stochastic
  {C}ahn--{H}illiard equation with unbounded noise diffusion.
\newblock {\em Stochastics and Partial Differential Equations: Analysis and
  Computations}, pages 1--37.

\bibitem[Da~Prato and Zabczyk, 2014]{da2014stochastic}
Da~Prato, G. and Zabczyk, J. (2014).
\newblock {\em Stochastic Equations in Infinite Dimensions}.
\newblock Cambridge University Press.

\bibitem[Debussche and Gouden{\`e}ge, 2011]{debussche2011stochastic}
Debussche, A. and Gouden{\`e}ge, L. (2011).
\newblock Stochastic {C}ahn--{H}illiard equation with double singular
  nonlinearities and two reflections.
\newblock {\em SIAM Journal on Mathematical Analysis}, 43(3):1473--1494.

\bibitem[Deugoue et~al., 2021]{deugoue2021splitting}
Deugoue, G., Moghomye, B.~J., and Medjo, T.~T. (2021).
\newblock Splitting-up scheme for the stochastic {C}ahn--{H}illiard
  {N}avier--{S}tokes model.
\newblock {\em Stochastics and Dynamics}, 21(01):2150005.

\bibitem[Edwards, 2012]{edwards2012functional}
Edwards, R.~E. (2012).
\newblock {\em Functional Analysis: Theory and Applications}.
\newblock Courier Corporation.

\bibitem[Eyre, 1998]{eyre1998unconditionally}
Eyre, D.~J. (1998).
\newblock Unconditionally gradient stable time marching the {C}ahn--{H}illiard
  equation.
\newblock {\em MRS Proceedings}, 529:39.

\bibitem[Feireisl and Petcu, 2019]{feireisl2019stability}
Feireisl, E. and Petcu, M. (2019).
\newblock Stability of strong solutions for a model of incompressible
  two--phase flow under thermal fluctuations.
\newblock {\em Journal of Differential Equations}, 267(3):1836--1858.

\bibitem[Feireisl and Petcu, 2021]{feireisl2021diffuse}
Feireisl, E. and Petcu, M. (2021).
\newblock A diffuse interface model of a two-phase flow with thermal
  fluctuations.
\newblock {\em Applied Mathematics \& Optimization}, 83(1):531--563.

\bibitem[Ferrante et~al., 2000]{ferrante2000parameter}
Ferrante, L., Bompadre, S., Possati, L., and Leone, L. (2000).
\newblock Parameter estimation in a {G}ompertzian stochastic model for tumor
  growth.
\newblock {\em Biometrics}, 56(4):1076--1081.

\bibitem[Flandoli and Gatarek, 1995]{flandoli1995martingale}
Flandoli, F. and Gatarek, D. (1995).
\newblock Martingale and stationary solutions for stochastic {N}avier--{S}tokes
  equations.
\newblock {\em Probability Theory and Related Fields}, 102(3):367--391.

\bibitem[Fritz, 2023]{fritz2023tumor}
Fritz, M. (2023).
\newblock Tumor evolution models of phase-field type with nonlocal effects and
  angiogenesis.
\newblock {\em Bulletin of Mathematical Biology}, 85(6):44.

\bibitem[Fritz et~al., 2021a]{fritz2021modeling}
Fritz, M., Jha, P.~K., K{\"o}ppl, T., Oden, J.~T., Wagner, A., and Wohlmuth, B.
  (2021a).
\newblock Modeling and simulation of vascular tumors embedded in evolving
  capillary networks.
\newblock {\em Computer Methods in Applied Mechanics and Engineering},
  384:113975.

\bibitem[Fritz et~al., 2021b]{fritz2021analysis}
Fritz, M., Jha, P.~K., K{\"o}ppl, T., Oden, J.~T., and Wohlmuth, B. (2021b).
\newblock Analysis of a new multispecies tumor growth model coupling {3D}
  phase-fields with a {1D} vascular network.
\newblock {\em Nonlinear Analysis: Real World Applications}, 61:103331.

\bibitem[Fritz et~al., 2021c]{fritz2021subdiffusive}
Fritz, M., Kuttler, C., Rajendran, M.~L., Wohlmuth, B., and Scarabosio, L.
  (2021c).
\newblock On a subdiffusive tumour growth model with fractional time
  derivative.
\newblock {\em IMA Journal of Applied Mathematics}, 86(4):688--729.

\bibitem[Fritz et~al., 2019a]{fritz2019local}
Fritz, M., Lima, E.~A., Nikoli{\'c}, V., Oden, J.~T., and Wohlmuth, B. (2019a).
\newblock Local and nonlocal phase-field models of tumor growth and invasion
  due to {ECM} degradation.
\newblock {\em Mathematical Models and Methods in Applied Sciences},
  29(13):2433--2468.

\bibitem[Fritz et~al., 2019b]{fritz2019unsteady}
Fritz, M., Lima, E.~A., Tinsley~Oden, J., and Wohlmuth, B. (2019b).
\newblock On the unsteady {D}arcy--{F}orchheimer--{B}rinkman equation in local
  and nonlocal tumor growth models.
\newblock {\em Mathematical Models and Methods in Applied Sciences},
  29(09):1691--1731.

\bibitem[Garcke and Lam, 2017a]{garcke2017analysis}
Garcke, H. and Lam, K.~F. (2017a).
\newblock Analysis of a {C}ahn--{H}illiard system with non-zero {D}irichlet
  conditions modeling tumor growth with chemotaxis.
\newblock {\em Discrete and Continuous Dynamical Systems}, 37(8):4277--4308.

\bibitem[Garcke and Lam, 2017b]{garcke2017well}
Garcke, H. and Lam, K.~F. (2017b).
\newblock Well-posedness of a {C}ahn--{H}illiard system modelling tumour growth
  with chemotaxis and active transport.
\newblock {\em European Journal of Applied Mathematics}, 28(2):284--316.

\bibitem[Garcke et~al., 2018a]{GLNS}
Garcke, H., Lam, K.~F., N\"{u}rnberg, R., and Sitka, E. (2018a).
\newblock A multiphase {C}ahn-{H}illiard-{D}arcy model for tumour growth with
  necrosis.
\newblock {\em Math. Models Methods Appl. Sci.}, 28(3):525--577.

\bibitem[Garcke et~al., 2018b]{garcke2018optimal}
Garcke, H., Lam, K.~F., and Rocca, E. (2018b).
\newblock Optimal control of treatment time in a diffuse interface model of
  tumor growth.
\newblock {\em Applied Mathematics \& Optimization}, 78(3):495--544.

\bibitem[Garcke et~al., 2016]{garcke2016cahn}
Garcke, H., Lam, K.~F., Sitka, E., and Styles, V. (2016).
\newblock A {C}ahn--{H}illiard--{D}arcy model for tumour growth with chemotaxis
  and active transport.
\newblock {\em Mathematical Models and Methods in Applied Sciences},
  26(06):1095--1148.

\bibitem[Garcke and Trautwein, 2022]{garcke2022numerical}
Garcke, H. and Trautwein, D. (2022).
\newblock Numerical analysis for a {C}ahn--{H}illiard system modelling tumour
  growth with chemotaxis and active transport.
\newblock {\em Journal of Numerical Mathematics}, 30(4):295--324.

\bibitem[Garcke and Yayla, 2020]{garcke2020long}
Garcke, H. and Yayla, S. (2020).
\newblock Long-time dynamics for a {C}ahn--{H}illiard tumor growth model with
  chemotaxis.
\newblock {\em Zeitschrift f{\"u}r angewandte Mathematik und Physik},
  71(4):123.

\bibitem[Gurtin, 1996]{gurtin1996generalized}
Gurtin, M.~E. (1996).
\newblock {Generalized Ginzburg--Landau and Cahn--Hilliard equations based on a
  microforce balance}.
\newblock {\em Physica D: Nonlinear Phenomena}, 92(3-4):178--192.

\bibitem[Hawkins-Daarud et~al., 2012a]{HZO}
Hawkins-Daarud, A., van~der Zee, K.~G., and Oden, J.~T. (2012a).
\newblock Numerical simulation of a thermodynamically consistent four-species
  tumor growth model.
\newblock {\em Int. J. Numer. Methods Biomed. Eng.}, 28(1):3--24.

\bibitem[Hawkins-Daarud et~al., 2012b]{hawkins2012numerical}
Hawkins-Daarud, A., van~der Zee, K.~G., and Tinsley~Oden, J. (2012b).
\newblock Numerical simulation of a thermodynamically consistent four-species
  tumor growth model.
\newblock {\em International Journal for Numerical Methods in Biomedical
  Engineering}, 28(1):3--24.

\bibitem[Jentzen and Kloeden, 2011]{jentzen2011taylor}
Jentzen, A. and Kloeden, P.~E. (2011).
\newblock {\em Taylor Approximations for Stochastic Partial Differential
  Equations}.
\newblock SIAM.

\bibitem[Kahle and Lam, 2020]{kahle2020parameter}
Kahle, C. and Lam, K.~F. (2020).
\newblock Parameter identification via optimal control for a
  {C}ahn--{H}illiard-chemotaxis system with a variable mobility.
\newblock {\em Applied Mathematics \& Optimization}, 82(1):63--104.

\bibitem[Lima et~al., 2015]{lima2015analysis}
Lima, E.~A., Almeida, R.~C., and Oden, J.~T. (2015).
\newblock Analysis and numerical solution of stochastic phase-field models of
  tumor growth.
\newblock {\em Numerical methods for partial differential equations},
  31(2):552--574.

\bibitem[Liotta et~al., 1976]{liotta1976stochastic}
Liotta, L.~A., Saidel, G.~M., and Kleinerman, J. (1976).
\newblock Stochastic model of metastases formation.
\newblock {\em Biometrics}, pages 535--550.

\bibitem[Liu and R{\"o}ckner, 2010]{liu2010spde}
Liu, W. and R{\"o}ckner, M. (2010).
\newblock {SPDE} in {H}ilbert space with locally monotone coefficients.
\newblock {\em Journal of Functional Analysis}, 259(11):2902--2922.

\bibitem[Liu and R{\"o}ckner, 2013]{liu2013local}
Liu, W. and R{\"o}ckner, M. (2013).
\newblock Local and global well-posedness of {SPDE} with generalized coercivity
  conditions.
\newblock {\em Journal of differential equations}, 254(2):725--755.

\bibitem[Lord et~al., 2014]{lord2014introduction}
Lord, G.~J., Powell, C.~E., and Shardlow, T. (2014).
\newblock {\em An Introduction to Computational Stochastic PDEs}.
\newblock Cambridge University Press.

\bibitem[Medjo, 2017]{medjo2017existence}
Medjo, T.~T. (2017).
\newblock On the existence and uniqueness of solution to a stochastic {2D}
  {C}ahn--{H}illiard--{N}avier--{S}tokes model.
\newblock {\em Journal of Differential Equations}, 263(2):1028--1054.

\bibitem[Niemist\"o et~al., 2005]{NDetAL05}
Niemist\"o, A., Dunmire, V., Yli-Harja, O., Zhang, W., and Shmulevich, I.
  (2005).
\newblock Analysis of angiogenesis using in vitro experiments and stochastic
  growth models.
\newblock {\em Phys. Rev. E}, 72:062902.

\bibitem[Orrieri et~al., 2020]{orrieri2020optimal}
Orrieri, C., Rocca, E., and Scarpa, L. (2020).
\newblock Optimal control of stochastic phase-field models related to tumor
  growth.
\newblock {\em ESAIM: Control, Optimisation and Calculus of Variations},
  26:104.

\bibitem[Rogers and Williams, 2000]{rogers2000diffusions}
Rogers, L. C.~G. and Williams, D. (2000).
\newblock {\em Diffusions, Markov processes and Martingales: Volume 2, It{\^o}
  Calculus}.
\newblock Cambridge University Press.

\bibitem[Sapountzoglou et~al., 2019]{sapountzoglou2019doubly}
Sapountzoglou, N., Wittbold, P., and Zimmermann, A. (2019).
\newblock On a doubly nonlinear pde with stochastic perturbation.
\newblock {\em Stochastics and Partial Differential Equations: Analysis and
  Computations}, 7:297--330.

\bibitem[Scarpa, 2018]{scarpa2018stochastic}
Scarpa, L. (2018).
\newblock On the stochastic {C}ahn--{H}illiard equation with a singular
  double-well potential.
\newblock {\em Nonlinear Analysis}, 171:102--133.

\bibitem[Scarpa, 2021a]{scarpa2021degenerate}
Scarpa, L. (2021a).
\newblock The stochastic {C}ahn--{H}illiard equation with degenerate mobility
  and logarithmic potential.
\newblock {\em Nonlinearity}, 34(6):3813.

\bibitem[Scarpa, 2021b]{scarpa2021stochastic}
Scarpa, L. (2021b).
\newblock The stochastic viscous {C}ahn--{H}illiard equation: {W}ell-posedness,
  regularity and vanishing viscosity limit.
\newblock {\em Applied Mathematics \& Optimization}, 84(1):487--533.

\bibitem[Simon, 1986]{simon1986compact}
Simon, J. (1986).
\newblock Compact sets in the space {$L^p(0,T;B)$}.
\newblock {\em Annali di Matematica pura ed applicata}, 146(1):65--96.

\bibitem[Speer et~al., 1984]{speer1984stochastic}
Speer, J.~F., Petrosky, V.~E., Retsky, M.~W., and Wardwell, R.~H. (1984).
\newblock A stochastic numerical model of breast cancer growth that simulates
  clinical data.
\newblock {\em Cancer Research}, 44(9):4124--4130.

\bibitem[Sung et~al., 2021]{sung2021global}
Sung, H., Ferlay, J., Siegel, R.~L., Laversanne, M., Soerjomataram, I., Jemal,
  A., and Bray, F. (2021).
\newblock {Global cancer statistics 2020: GLOBOCAN estimates of incidence and
  mortality worldwide for 36 cancers in 185 countries}.
\newblock {\em CA: A Cancer Journal for Clinicians}, 71(3):209--249.

\bibitem[Tan and Chen, 1998]{TanChen98}
Tan, W. and Chen, C. (1998).
\newblock Stochastic modeling of carcinogenesis: Some new insights.
\newblock {\em Mathematical and Computer Modelling}, 28(11):49--71.

\bibitem[Vallet and Zimmermann, 2019]{vallet2019well}
Vallet, G. and Zimmermann, A. (2019).
\newblock Well-posedness for a pseudomonotone evolution problem with
  multiplicative noise.
\newblock {\em Journal of Evolution Equations}, 19(1):153--202.

\bibitem[Wagner et~al., 2023]{wagner2023phasefield}
Wagner, A., Schlicke, P., Fritz, M., Kuttler, C., Oden, J.~T., Schumann, C.,
  and Wohlmuth, B. (2023).
\newblock A phase-field model for non-small cell lung cancer under the effects
  of immunotherapy.
\newblock {\em Mathematical Biosciences and Engineering}, 20(10):18670--18694.

\bibitem[Wise et~al., 2008]{WLFC08}
Wise, S., Lowengrub, J., Frieboes, H., and Cristini, V. (2008).
\newblock Three-dimensional multispecies nonlinear tumor growth -- i: Model and
  numerical method.
\newblock {\em Journal of Theoretical Biology}, 253(3):524--543.

\bibitem[Zhang and Karniadakis, 2017]{zhang2017numerical}
Zhang, Z. and Karniadakis, G. (2017).
\newblock {\em Numerical Methods for Stochastic Partial Differential Equations
  with White Noise}.
\newblock Springer Science \& Business Media.

\end{thebibliography}
\end{document}